\newcommand{\upperRomannumeral}[1]{\uppercase\expandafter{\romannumeral#1}}
\theoremstyle{plain}
  \newtheorem{proposition}[]{Proposition}
  \newtheorem{lemma}[]{Lemma}
  \newtheorem{theorem}[]{Theorem}
  \newtheorem{corollary}[]{Corollary}
  \newtheorem{remark}[]{Remark}
  \newtheorem*{remark*}{Remark}
\title{FK-Ising coupling applied to near-critical planar models}
\author{Federico Camia}
\address{Division of Science, NYU Abu Dhabi, Saadiyat Island, Abu Dhabi, UAE \& Department of Mathematics, VU Amsterdam, De Boelelaan 1081a, 1081 HV Amsterdam, the Netherlands}
\email{federico.camia@nyu.edu}
\author{Jianping Jiang}
\address{NYU-ECNU Institute of Mathematical Sciences at NYU Shanghai, 3663 Zhongshan
Road North, Shanghai 200062, China.}
\email{jjiang@nyu.edu}
\author{Charles M. Newman}
\address{Courant Institute of Mathematical Sciences, New York University,
251 Mercer st, New York, NY 10012, USA, \& NYU-ECNU Institute of Mathematical
Sciences at NYU Shanghai, 3663 Zhongshan Road North, Shanghai 200062, China.}
\email{newman@cims.nyu.edu}
\begin{document}
\begin{abstract}
We consider the Ising model at its critical temperature with external magnetic field $ha^{15/8}$ on $a\mathbb{Z}^2$. We give a purely probabilistic proof, using FK methods rather than reflection positivity, that for $a=1$, the correlation length is $\geq const.~h^{-8/15}$ as $h\downarrow0$. We extend to the $a\downarrow0$ continuum limit the FK-Ising coupling for all $h>0$, and obtain tail estimates for the largest renormalized cluster area in a finite domain as well as an upper bound with exponent $1/8$ for the one-arm event. Finally, we show that for $a=1$, the average magnetization, $\mathcal{M}(h)$, in $\mathbb{Z}^2$ satisfies $\mathcal{M}(h)/h^{1/15}\rightarrow$ some $B\in(0,\infty)$ as $h\downarrow0$.
\end{abstract}
\maketitle
\section{Introduction}
\subsection{Overview}\label{secoverview}
In a recent paper \cite{CJN17}, the authors obtained upper and lower bounds of the form $C_0H^{8/15}$ and $B_0H^{8/15}$ as $H\downarrow0$, for the exponential decay rate (the mass or inverse correlation length) of the $(\beta_c,H)$ planar ($\mathbb{Z}^2$) Ising model at critical inverse temperature $\beta_c$ with magnetic field $H\geq 0$. The lower bound derivation used methods based on the FK random cluster representation of the Ising model, including the use of the Radon-Nikodym derivative of the distribution of the model with external field with respect to the distribution of the model without external field. This derivative appears implicitly in \cite{CV16} (see also [ref. 16 of \cite{CV16}]), as we learned after the first version of this paper was posted in 2017. The upper bound, on the other hand, was derived in \cite{CJN17} by quite different methods based on reflection positivity.

Here we extend the FK methods of \cite{CJN17} in several ways. First we give (in Theorem \ref{thm:upper} and Corollary \ref{cor:upper}) an alternative derivation of the $H^{8/15}$ upper bound using only FK-based methods. Then in Theorem \ref{thm2} we show that the FK/Ising coupling of \cite{CJN17} is valid for the scaling limit continuum FK measure ensemble with positive renormalized magnetic field $h$, extending the continuum Edwards-Sokal type coupling shown in \cite{CCK17} beyond the $h=0$ case. This coupling is then applied to obtain in Theorem \ref{thm3} for $h\geq 0$ precise tail behavior (of the form $\exp{(-Cx^{16})}$) for the largest total mass in the ensemble of continuum FK measures in a bounded domain; this is analogous to the result of \cite{Kis14} for the tail of the largest cluster area in critical Bernoulli percolation. Tail behavior for both continuum and discrete FK models is derived in Sections \ref{sec:coupling} and \ref{sec:proof} by using the FK/Ising coupling to relate moment generating functions for cluster size to those for Ising magnetization.

Our final main result (in Theorem \ref{thm4}) gives very precise behavior for the magnetization $\mathcal{M}(H)$ (expected spin value of the $(\beta_c,H)$ Ising model on $\mathbb{Z}^2$) that improves the bounds from \cite{CGN14} that as $H\downarrow 0$
\begin{equation}\label{eq:mbd}
B_1 H^{1/15}\leq \mathcal{M}(H)\leq B_2 H^{1/15}.
\end{equation}
The improved result is
\begin{equation}\label{eq:mbd1}
\lim_{H\downarrow 0}\frac{\mathcal{M}(H)}{H^{1/15}}=B\in (0,\infty).
\end{equation}
The derivation of \eqref{eq:mbd} in \cite{CGN14} was fairly short, but the derivation of \eqref{eq:mbd1} in Subsection~\ref{subsec:main} below is yet shorter and uses little more than the existence of a scaling limit magnetization field for $h\geq 0$ \cite{CGN15,CGN16}.

\subsection{Main results}\label{subsec:main}

Let $a>0$. Denote by $P_h^a$ the infinite volume Ising measure at the inverse critical temperature $\beta_c$ on $a\mathbb{Z}^2$ with external field $a^{15/8}h>0$. Let $\langle\cdot\rangle_{a,h}$ be the expectation with respect to $P_h^a$. Let $\langle \sigma_x;\sigma_y\rangle_{a,h}$ be the truncated two-point function, i.e.,
\[\langle \sigma_x;\sigma_y\rangle_{a,h}:=\langle \sigma_x\sigma_y\rangle_{a,h}-\langle\sigma_x\rangle_{a,h}\langle\sigma_y\rangle_{a,h}.\]
For $x,y\in\mathbb{R}^2$, let $|x-y|:=\|x-y\|_2$ denote the Euclidean distance. Our first main result is:
\begin{theorem}\label{thm:upper}
There exist $C_2, C_3, C_4\in(0,\infty)$ such that for any $a\in(0,1]$, $h>0$ with $a^{15/8}h\leq 1$, and $x,y\in a\mathbb{Z}^2$ with $|x-y|\geq C_2h^{-8/15}$
\begin{equation}\label{eq:up1}
\langle \sigma_x;\sigma_y\rangle_{a,h}\geq C_3a^{1/4}h^{2/15}e^{-C_4 h^{8/15}|x-y|}.
\end{equation}
In particular, for a=1 and any $H\in(0,1]$, we have for any $x^{\prime},y^{\prime}\in \mathbb{Z}^2$  with $|x^{\prime}-y^{\prime}|\geq C_2H^{-8/15}$
\begin{equation}\label{eq:up2}
\langle \sigma_{x^{\prime}};\sigma_{y^{\prime}}\rangle_{1,H}\geq C_3 H^{2/15}e^{-C_4 H^{8/15}|x^{\prime}-y^{\prime}|}.
\end{equation}
\end{theorem}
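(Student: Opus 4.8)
The plan is to bound $\langle\sigma_x;\sigma_y\rangle_{a,h}$ below by a single random-cluster connectivity probability and then estimate that probability by a near-critical renormalization at the scale $\xi:=h^{-8/15}$; note that the hypothesis $a^{15/8}h\le1$ says exactly that $\xi\ge a$, so that this scale is meaningful on $a\mathbb{Z}^2$.

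\emph{FK reduction.} Write $h':=a^{15/8}h$ and let $\phi_{a,h}$ be the $q=2$ random-cluster measure on $a\mathbb{Z}^2$ together with a ghost vertex $g$ joined to every site by an edge of parameter $1-e^{-2h'}$, coupled to $P^a_h$ in the Edwards--Sokal way (colour clusters $\pm$ uniformly, the cluster of $g$ being $+$), as in \cite{CJN17}. Then $\langle\sigma_x\sigma_y\rangle_{a,h}=\phi_{a,h}(x\leftrightarrow y)$, $\langle\sigma_x\rangle_{a,h}=\phi_{a,h}(x\leftrightarrow g)$, and, writing $C_x$ for the cluster of $x$ in $a\mathbb{Z}^2$, the conditional probability (given the $a\mathbb{Z}^2$-edges) that $C_x$ meets $g$ is $\tanh(h'|C_x|)$. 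Splitting according to whether $x$ and $y$ are joined through $g$ or only inside $a\mathbb{Z}^2$ gives
\[\langle\sigma_x;\sigma_y\rangle_{a,h}=\mathbb{E}_{\phi_{a,h}}\!\big[\mathbf{1}\{x\leftrightarrow y\text{ off }g\}\,\mathrm{sech}^2(h'|C_x|)\big]+\mathrm{Cov}_{\phi_{a,h}}\!\big(\tanh(h'|C_x|),\tanh(h'|C_y|)\big),\]
both terms being $\ge0$ (the covariance by FKG). Since $\mathrm{sech}^2(t)\ge1-\tanh(t)$ and $1-\tanh(h'|C_x|)$ is precisely the conditional probability that no ghost-edge on $C_x$ is open, the first term is $\ge q_3:=\phi_{a,h}(x\leftrightarrow y\text{ off }g,\ C_x\not\leftrightarrow g)$, and it suffices to prove $q_3\ge C_3 a^{1/4}h^{2/15}e^{-C_4 h^{8/15}|x-y|}$.

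\emph{The renormalized event.} Set $r:=|x-y|$, $N:=\lceil r/\xi\rceil$, and tile $[x,y]$ by a chain $B_1,\dots,B_N$ of boxes of side $\asymp\xi$, with $x$ and $y$ in the bulks of $B_1$ and $B_N$. I would bound $q_3$ below by $\phi_{a,h}(\mathsf{G})$, where $\mathsf{G}$ asks that (i) $x$ and $y$ lie respectively in a left--right open crossing cluster of $B_1$ and of $B_N$; (ii) these crossing clusters are successively joined, so that $x\leftrightarrow y$ off $g$; (iii) each of them has $O((\xi/a)^{15/8})$ sites and stays in a bounded neighbourhood of its box; and (iv) no ghost-edge on $C_x$ is open. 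On $\mathsf{G}$ one has $C_x\subseteq\bigcup_i 2B_i$ and $|C_x|\lesssim N(\xi/a)^{15/8}$; the key arithmetic is $h'(\xi/a)^{15/8}=a^{15/8}h\cdot h^{-1}a^{-15/8}=1$, so that $h'|C_x|\lesssim N\lesssim r h^{8/15}$ and hence the expensive-looking constraint (iv) costs only $e^{-O(1)}$ per box.

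\emph{Estimating the probability.} Constraints (ii) and (iii) contribute, by RSW for the primal and the self-dual dual FK$(q=2)$ model, near-critical cluster-size bounds, and quasi-independence across boxes, a factor $\ge c^N$; constraint (iv) contributes per box the probability $\ge(1-e^{-2h'})^{O((\xi/a)^{15/8})}=e^{-O(h'(\xi/a)^{15/8})}=e^{-O(1)}$ that an $O((\xi/a)^{15/8})$-site cluster misses all its ghost-edges, hence another $\ge c^N$; and constraint (i) contributes $\gtrsim\big((\xi/a)^{-1/8}\big)^2=a^{1/4}h^{2/15}$, using that the FK$(q=2)$ one-arm probability in a box of side $\ell$ is $\asymp\ell^{-1/8}$. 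Multiplying and using $c^N\ge e^{-(\log(1/c))N}\ge e^{-C_4 h^{8/15}r}$ (as $N\le 2r/\xi=2rh^{8/15}$ once $C_2$ is large) yields $q_3\ge\phi_{a,h}(\mathsf{G})\gtrsim a^{1/4}h^{2/15}e^{-C_4 h^{8/15}r}$, which is \eqref{eq:up1}; \eqref{eq:up2} is the case $a=1$.

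\emph{Main obstacle.} The substantive work is in the previous step: the classical critical inputs (RSW, the one-arm exponent $1/8$, cluster-size control) must hold for $\phi_{a,h}$, not merely for $\phi_{a,0}$. The increasing pieces --- the crossings in (ii) and the arms in (i) --- transfer at no cost, since $\phi_{a,h}$ restricted to $a\mathbb{Z}^2$-edges stochastically dominates $\phi_{a,0}$. The decreasing pieces --- the closed dual circuits and cluster-size caps in (iii) --- do not, and here one must show that $\phi_{a,h}$ ``looks critical'' in windows of side $\lesssim\xi$: the Radon--Nikodym derivative of $\phi_{a,h}$ with respect to $\phi_{a,0}$ on such a window, namely $\propto\prod_{C}\tfrac12(1+e^{-2h'|C|})$, has $O(1)$ log-fluctuations (each correlation volume contributing $O(h'(\xi/a)^{15/8})=O(1)$), so it is bounded below by a constant with probability bounded below, which transfers the decreasing estimates up to constants. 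Making this uniform in $a\in(0,1]$ --- in particular through $a\downarrow0$, where a naive pointwise lower bound on that derivative is exponentially small --- is the crux; the rest is bookkeeping of the exponents $1/8$ and $15/8$.
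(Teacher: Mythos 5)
Your strategy is essentially the one in the paper: reduce, via the FK/ghost coupling and FKG, to lower-bounding $\mathbb{P}^a_h\bigl(x\centernot\longleftrightarrow g,\ x\longleftrightarrow y,\ |\mathcal{C}(x)|\text{ controlled}\bigr)$, pay the ghost-avoidance cost through $1-\tanh(r)\ge e^{-2r}$ on the whole cluster, and then build a long low-cost connection at the scale $\xi=h^{-8/15}$ out of RSW crossings, a one-arm estimate with exponent $1/8$ at both ends, and a large-deviation bound for the cluster size. (The paper organizes this slightly differently: it fixes $h$, works on $a\mathbb{Z}^2$ with unit boxes as $a\downarrow 0$ — using a dual blocking circuit around a thin rectangle to decouple $\mathcal{C}(x)$ rather than a box-by-box cluster cap — and only sets $a=H^{8/15}$, $h=1$ at the very end to extract the $8/15$ and $2/15$ exponents; but after rescaling that is the same tiling you describe.)

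The ``crux'' you flag at the end — that the pointwise lower bound on $d\tilde{\mathbb{P}}^a_{Q,\rho,h}/d\mathbb{P}^a_{Q,\rho,0}\propto\prod_{\mathcal{C}}\tfrac12\bigl(1+e^{-2h'|\mathcal{C}|}\bigr)$ looks exponentially small as $a\downarrow 0$ because the number of clusters diverges — is in fact not an obstacle, and the paper disposes of it in one line (Lemma~\ref{lem:pos}). Rewrite the unnormalized factor as $\prod_{\mathcal{C}}\cosh\bigl(ha^{15/8}|\mathcal{C}|\bigr)$, which is $\ge 1$ \emph{pointwise}; the divergent product of $1/2$'s you are worried about has been absorbed into the constant $e^{-ha^{15/8}|D^a|}$ that cancels in the normalization. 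The normalizing constant itself is, by the Edwards--Sokal coupling, exactly the moment generating function $E^a_{Q,\rho,0}\bigl(e^{hm^a_Q}\bigr)$ of the renormalized magnetization (Proposition~\ref{prop:mgf}), and this is bounded by $e^{C_5(h+h^2)}$ \emph{uniformly in} $a$ (Lemma~\ref{lem:mgf}, from \cite{CGN15}). Hence every event $E$, increasing or not, satisfies $\tilde{\mathbb{P}}^a_{Q,\rho,h}(E)\ge e^{-C_5(h+h^2)}\,\mathbb{P}^a_{Q,\rho,0}(E)$ uniformly in $a$ and $\rho$; the hypothesis $a^{15/8}h\le 1$ is precisely what makes the rescaled field $\tilde h=1$ and the rescaled mesh $\tilde a\le 1$ in each $\xi$-box, so this applies with a universal constant. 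No probabilistic argument about ``$O(1)$ log-fluctuations of the Radon--Nikodym derivative'' is needed, and replacing your ``bounded below with probability bounded below'' by this deterministic bound is what cleanly finishes the argument.
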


For $a=1$, define the (lattice) mass (or inverse correlation length) $\tilde{M}(H)$ as the supremum of all $\tilde{m}>0$ such that for some $C(\tilde{m})<\infty$,
\begin{equation}\label{eq:def}
\langle \sigma_{x^{\prime}};\sigma_{y^{\prime}}\rangle_{1,H}\leq C(\tilde{m})e^{-\tilde{m}|x^{\prime}-y^{\prime}|}\text{ for any }x^{\prime},y^{\prime}\in \mathbb{Z}^2.
\end{equation}
The following immediate corollary of Theorem \ref{thm:upper} gives a one-sided bound for the behavior of $\tilde{M}(H)$ as $H\downarrow 0$, with the expected critical exponent $8/15$.
\begin{corollary}\label{cor:upper}
\[\tilde{M}(H)\leq C_4 H^{8/15} \text{ as }H\downarrow0,\]
with $C_4$ the same constant as in Theorem \ref{thm:upper}.
\end{corollary}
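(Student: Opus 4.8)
The plan is to read the bound off directly from the lower bound \eqref{eq:up2} of Theorem \ref{thm:upper}, comparing its exponential rate against the definition \eqref{eq:def} of $\tilde{M}(H)$. Fix $H \in (0,1]$ and let $\tilde m > 0$ be any rate for which \eqref{eq:def} holds, i.e. for which there is a finite constant $C(\tilde m)$ with $\langle \sigma_{x'};\sigma_{y'}\rangle_{1,H} \le C(\tilde m)\, e^{-\tilde m |x'-y'|}$ for all $x', y' \in \mathbb{Z}^2$.

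I would then combine this upper bound with \eqref{eq:up2}, which is valid once $|x'-y'| \ge C_2 H^{-8/15}$: for all such pairs,
\[
C_3 H^{2/15}\, e^{-C_4 H^{8/15}|x'-y'|} \;\le\; \langle \sigma_{x'};\sigma_{y'}\rangle_{1,H} \;\le\; C(\tilde m)\, e^{-\tilde m |x'-y'|},
\]
so that $C_3 H^{2/15} \le C(\tilde m)\, e^{(C_4 H^{8/15} - \tilde m)\,|x'-y'|}$ for all such pairs. If one had $\tilde m > C_4 H^{8/15}$, then letting $|x'-y'| \to \infty$ (for instance along the points $(n,0)$, $n \in \mathbb{N}$) would force the right-hand side to $0$ while the left-hand side is a fixed positive constant, a contradiction. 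Hence every admissible $\tilde m$ satisfies $\tilde m \le C_4 H^{8/15}$, and taking the supremum over admissible $\tilde m$ gives $\tilde M(H) \le C_4 H^{8/15}$ for every $H \in (0,1]$, in particular as $H \downarrow 0$.

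There is no real obstacle here: the statement is an immediate consequence of Theorem \ref{thm:upper}, and the only mild point requiring care is to observe that the restriction $|x'-y'| \ge C_2 H^{-8/15}$ in that theorem does not interfere, since the contradiction is extracted precisely in the regime $|x'-y'| \to \infty$. No input about the Ising model beyond Theorem \ref{thm:upper} enters the argument.
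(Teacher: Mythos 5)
Your argument is correct and is precisely the one the paper leaves implicit when it calls the corollary ``immediate'': combine the exponential lower bound from \eqref{eq:up2} with the definition \eqref{eq:def} of an admissible rate $\tilde m$, let $|x'-y'|\to\infty$ to rule out $\tilde m > C_4 H^{8/15}$, and take the supremum. The observation that the restriction $|x'-y'|\ge C_2 H^{-8/15}$ is harmless because the contradiction occurs in the large-distance limit is exactly the right point to flag.
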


Let $D\subseteq\mathbb{R}^2$ be a simply-connected and bounded domain with a piecewise smooth boundary. Let $\Phi^{a,h}_D$ be the near-critical magnetization field in $D^a:=a\mathbb{Z}^2\cap D$ defined by
\begin{equation}\label{eqfield}
\Phi^{a,h}_D:=a^{15/8}\sum_{x\in D^a}\sigma_x\delta_x,
\end{equation}
where $\{\sigma_x\}_{x\in D^a}$ is a configuration for the critical Ising model on $D^a$ with external field $a^{15/8}h$ and free boundary conditions and $\delta_x$ is a unit Dirac point measure at $x$. In Proposition 1.5 of \cite{CGN16} (resp., Theorem 1.3 of \cite{CGN15}), it was proved that $\Phi^{a,h}_{D}$ (resp., $\Phi^{a,0}_{D}$) converges in law to a continuum (generalized) random field $\Phi^{h}_D$ (resp., $\Phi^{0}_D$). Let $C^{\infty}(D)$ denote the set of infinitely differentiable functions with domain $D$. $\Phi^{h}_D(\tilde{f})$ denotes the field $\Phi^{h}_D$ paired against the test function $\tilde{f}$ (which was denoted $\langle\Phi^{h}_D,\tilde{f}\rangle$ in \cite{CGN16}). For any configuration $\omega$ in the FK percolation on $D^a$ with no external field and free boundary conditions, let $\mathscr{C}(D^a,f,\omega)$ denote the set of clusters of $\omega$ in $D^a$, where $f$ stands for free boundary conditions. For $\mathcal{C}\in\mathscr{C}(D^a,f,\cdot)$, let $\mu^a_{\mathcal{C}}:=a^{15/8}\sum_{x\in\mathcal{C}}\delta_x$ be the normalized counting measure of $\mathcal{C}$. By Theorem 8.2 of \cite{CCK17},
\[\{\mu^a_{\mathcal{C}}:\mathcal{C}\in\mathscr{C}(D^a,f,\cdot)\}\Longrightarrow \{\mu^0_{\mathcal{C}}:\mathcal{C}\in\mathscr{C}(D,f,\cdot)\},\]
where $\Longrightarrow$ denotes convergence in distribution and the right-hand side is a collection of measures obtained from the scaling limit; here the topology of convergence is defined by a metric $dist$; for two collections, $\mathscr{S}$ and $\mathscr{S}^{\prime}$, of measures on $D$,
\begin{equation}\label{eq:metric}
dist(\mathscr{S}, \mathscr{S}^{\prime}):=\inf\{\epsilon>0: \forall \mu \in \mathscr{S}~\exists \nu \in \mathscr{S}^{\prime} \text{ s.t. }d_P(\mu, \nu)\leq\epsilon \text{ and vice versa}\},
\end{equation}
where $d_P$ is the Prokhorov distance. For $h\geq 0$, let $E^0_{D,f,h}$ be the expectation with respect to the continuum random field $\Phi^{h}_D$.

Before stating the next theorem, we first extend the family of random variables $\{\mu^0_{\mathcal{C}}\}$ from magnetic field $h=0$ to $h>0$. Letting $(\Omega,\mathscr{F},\mathbb{P}^0_{D,f,0})$ denote the probability space for $\{\mu^0_{\mathcal{C}}\}$, we define a new ``tilted'' probability measure $\mathbb{P}^0_{D,f,h}$ by
\begin{equation}\label{eq:RNC}
\frac{d\mathbb{P}^0_{D,f,h}}{d\mathbb{P}^0_{D,f,0}}=\frac{\prod_{\mathcal{C}\in\mathscr{C}(D,f,\cdot)}\cosh\left(h\mu^0_{\mathcal{C}}(D)\right)}
{\mathbb{E}^0_{D,f,0}\left(\prod_{\mathcal{C}\in\mathscr{C}(D,f,\cdot)}\cosh\left(h\mu^0_{\mathcal{C}}(D)\right)\right)},
\end{equation}
where $\mu^{0}_{\mathcal{C}}(D) = \int_{D} d\mu^{0}_{\mathcal{C}}$.
The finiteness of the expectation in \eqref{eq:RNC} is proved in Proposition~\ref{prop:mgfcon} below. Then for $h\geq 0$, we define $\mu^0_{\mathcal{C},h}=\mu^0_{\mathcal{C}}$ as a function of $\omega$, but on the tilted space $(\Omega,\mathscr{F},\mathbb{P}^0_{D,f,h})$. We now associate with the clusters $\mathcal{C}\in\mathscr{C}(D,f,\cdot)$, independent uniform $(0,1)$ random variables $U_{\mathcal{C}}$ and define $(\pm1)$-valued variables $S_{\mathcal{C},h}$ by
\begin{equation}
S_{\mathcal{C},h}=
\begin{cases}
      +1, & \text{if } U_{\mathcal{C}}\leq \left(1+\tanh(h\mu^0_{\mathcal{C}}(D))\right)/2,\\
      -1, & \text{otherwise}.
\end{cases}
\end{equation}
We remark that the uniform random variables $U_{\mathcal{C}}$ enable us to couple the models for different values of $h$.
Let $\mathbb{E}^0_{D,f,0}$ be the expectation with respect to $\mathbb{P}^0_{D,f,0}$.
With these definitions, we have the following representation for the near-critical magnetization field $\Phi^h_D$.

\begin{theorem}\label{thm2}
Suppose $D$ is a simply-connected bounded domain in $\mathbb{R}^2$ with piecewise smooth boundary; then
\begin{equation}\label{eq:thm21}
\Phi^h_D\overset{d}=\sum_{\mathcal{C}\in\mathscr{C}(D,f,\cdot)} S_{\mathcal{C},h}\; d\mu^0_{\mathcal{C},h},
\end{equation}
where $\overset{d}=$ means equal in distribution as generalized random fields with test function space $C^{\infty}(\bar{D})$. Indeed, for $\tilde{f}\in C^{\infty}(\bar{D})$,
\begin{equation}\label{eq:thm22}
E^0_{D,f,h}e^{\Phi^h_D(\tilde{f})}=\frac{\mathbb{E}^0_{D,f,0}\left\{\prod_{\mathcal{C}\in\mathscr{C}(D,f,\cdot)}\left[\cosh\left(h\mu^0_{\mathcal{C}}(D)\right)E_{U_{\mathcal{C}}}(e^{S_{\mathcal{C},h}\mu^0_{\mathcal{C}}(\tilde{f})})\right]\right\}}
{\mathbb{E}^0_{D,f,0}\left\{\prod_{\mathcal{C}\in\mathscr{C}(D,f,\cdot)}\cosh\left(h\mu^0_{\mathcal{C}}(D)\right)\right\}},
\end{equation}
where
\[E_{U_{\mathcal{C}}}(e^{S_{\mathcal{C},h}\mu^0_{\mathcal{C}}(\tilde{f})})=\frac{1+\tanh(h\mu^0_{\mathcal{C}}(D))}{2}e^{\mu^0_{\mathcal{C}}(\tilde{f})}+\frac{1-\tanh(h\mu^0_{\mathcal{C}}(D))}{2}e^{-\mu^0_{\mathcal{C}}(\tilde{f})}.\]
\end{theorem}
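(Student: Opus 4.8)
The plan is to deduce the distributional identity \eqref{eq:thm21} from its Laplace-transform version \eqref{eq:thm22}, so the real work is to establish \eqref{eq:thm22}, and then to promote the equality of moment generating functions to equality of the generalized fields. I would proceed by approximation from the discrete coupling of \cite{CJN17}. At lattice spacing $a$, the Edwards--Sokal--type coupling gives the exact identity
\begin{equation*}
\langle e^{\Phi^{a,h}_D(\tilde f)}\rangle
=\frac{\mathbb{E}^a_{D,f,0}\left\{\prod_{\mathcal{C}\in\mathscr{C}(D^a,f,\cdot)}\left[\cosh\!\bigl(h\mu^a_{\mathcal{C}}(D)\bigr)\,E_{U_{\mathcal{C}}}\!\bigl(e^{S_{\mathcal{C},h}\mu^a_{\mathcal{C}}(\tilde f)}\bigr)\right]\right\}}
{\mathbb{E}^a_{D,f,0}\left\{\prod_{\mathcal{C}\in\mathscr{C}(D^a,f,\cdot)}\cosh\!\bigl(h\mu^a_{\mathcal{C}}(D)\bigr)\right\}},
\end{equation*}
where, writing each spin as $S_{\mathcal{C},h}$ constant on an FK cluster $\mathcal{C}$ and $\sigma_x=S_{\mathcal{C},h}$ for $x\in\mathcal{C}$, one checks $a^{15/8}\sum_{x\in D^a}\sigma_x \tilde f(x) = \sum_{\mathcal{C}} S_{\mathcal{C},h}\,\mu^a_{\mathcal{C}}(\tilde f)$ and the reweighting by $\prod_{\mathcal C}\cosh(h\mu^a_{\mathcal C}(D))$ accounts exactly for the magnetic-field term $\exp(a^{15/8}h\sum_x\sigma_x)$ after summing out the spins. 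The left side converges to $E^0_{D,f,h}e^{\Phi^h_D(\tilde f)}$ by the convergence in law $\Phi^{a,h}_D\Rightarrow\Phi^h_D$ of \cite{CGN16} together with uniform integrability of $e^{\Phi^{a,h}_D(\tilde f)}$ (which follows from the exponential moment bounds on the near-critical field, i.e. from $\langle e^{\lambda\Phi^{a,h}_D(\tilde f)}\rangle$ being bounded uniformly in $a$ for $\lambda$ in a neighborhood of $1$, a consequence of the tail estimates of the type in Theorem \ref{thm3} or directly from \cite{CGN16}).

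The crux is the convergence of the right-hand side. By Theorem 8.2 of \cite{CCK17}, the cluster-measure ensembles converge, $\{\mu^a_{\mathcal{C}}\}\Rightarrow\{\mu^0_{\mathcal{C}}\}$, in the metric $dist$ built from the Prokhorov distance; I would invoke Skorokhod's representation to realize this convergence almost surely on a common probability space, and then add the i.i.d.\ uniforms $U_{\mathcal C}$ (which depend only on the clusters through their total masses and so pass to the limit). The integrand
\[
G_a:=\prod_{\mathcal{C}\in\mathscr{C}(D^a,f,\cdot)}\left[\cosh\!\bigl(h\mu^a_{\mathcal{C}}(D)\bigr)\,E_{U_{\mathcal{C}}}\!\bigl(e^{S_{\mathcal{C},h}\mu^a_{\mathcal{C}}(\tilde f)}\bigr)\right]
\]
is a continuous functional of the ensemble for the $dist$-topology on the relevant event: only finitely many clusters carry non-negligible mass, $\cosh$ and $\mu\mapsto e^{\pm\mu(\tilde f)}$ are continuous in the total mass resp.\ in weak convergence against the smooth bounded $\tilde f$, and the tail of small clusters contributes a product of factors each within $1+o(1)$ of $1$ (quantitatively, $\cosh(h m)e^{\pm(\sup|\tilde f|)m}\le \exp(Cm^2)$ for $m$ small, and $\sum_{\mathcal C}\mu^a_{\mathcal C}(D)^2$ is tight — this is essentially the content of the moment bounds that also give finiteness in \eqref{eq:RNC}). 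Hence $G_a\to G_0$ a.s., and the same quadratic-exponential domination gives uniform integrability, so $\mathbb{E}^a_{D,f,0}G_a\to\mathbb{E}^0_{D,f,0}G_0$; the denominator is the special case $\tilde f\equiv 0$. This yields \eqref{eq:thm22}.

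To get \eqref{eq:thm21} itself, observe that \eqref{eq:thm22} holds with $\tilde f$ replaced by $t\tilde f$ for all real $t$ in a neighborhood of $0$ (re-running the argument, using that the exponential moments are finite in a strip), so the two sides of \eqref{eq:thm21}, paired against any fixed $\tilde f\in C^\infty(\bar D)$, have the same moment generating function in a neighborhood of the origin and hence the same law as real random variables. Since $C^\infty(\bar D)$ is the test-function space and the finite-dimensional distributions of a generalized random field are determined by one-dimensional pairings via the Cramér--Wold device (pairing against $\sum_i t_i\tilde f_i$), the two fields have the same finite-dimensional distributions, which is the asserted equality in distribution. I expect the main obstacle to be the control of the infinite product $G_a$ uniformly in $a$: one must show the contribution of the small clusters is negligible in a way that is stable under the $dist$-topology, i.e.\ that $\prod_{\mathcal{C}}\cosh(h\mu^a_{\mathcal C}(D))$ and the perturbation by $E_{U_{\mathcal C}}(e^{S_{\mathcal C,h}\mu^a_{\mathcal C}(\tilde f)})/\cosh(h\mu^a_{\mathcal C}(D))$ are tight and converge; this rests on quantitative bounds for $\sum_{\mathcal C}\mu^a_{\mathcal C}(D)^2$ of the kind underlying Proposition \ref{prop:mgfcon} and the tail estimates, and it is exactly where the $h>0$ case goes beyond the $h=0$ coupling of \cite{CCK17}.
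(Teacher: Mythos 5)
Your proposal is correct in substance but takes a genuinely different route from the paper, so let me compare. You take the limit $a\downarrow 0$ directly on the $h>0$ discrete identity, which forces you to control the cluster-product functional $G_a$ uniformly in $a$, with the reweighting $\prod_{\mathcal C}\cosh(h\mu^a_{\mathcal C}(D))$ and the $U_{\mathcal C}$-average entangled in one object. The paper instead decouples these two steps: it first uses the already-established continuum Radon--Nikodym relation $d\Phi^h_D \propto e^{h\Phi^0_D(1_D)}\,d\Phi^0_D$ from Proposition~1.5 of \cite{CGN16} to write
\[
E^0_{D,f,h}\bigl(e^{\Phi^h_D(\tilde f)}\bigr)
=\frac{E^0_{D,f,0}\bigl(e^{\Phi^0_D(\tilde f + h\,1_D)}\bigr)}{E^0_{D,f,0}\bigl(e^{\Phi^0_D(h\,1_D)}\bigr)},
\]
so the $h$-dependence becomes a mere shift of the test function. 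The only thing that then requires an $a\downarrow0$ limit is the single $h=0$ identity $E^0_{D,f,0}(e^{\Phi^0_D(\tilde f)})=\mathbb E^0_{D,f,0}\bigl(\prod_{\mathcal C}\cosh(\mu^0_{\mathcal C}(\tilde f))\bigr)$, isolated as Proposition~\ref{prop:mgfcon}, which is proved with a diameter cutoff $\epsilon$ on clusters, uniform integrability via $\cosh^2(r)\le\cosh(2r)$ together with Lemma~\ref{lem:mgf}, weak convergence of the $\epsilon$-truncated ensembles (Theorem~8.2 of \cite{CCK17}), and finally monotone convergence in $\epsilon$. Algebraically, the key simplification you stop just short of is that
\[
\cosh\bigl(h\mu^0_{\mathcal C}(D)\bigr)\,E_{U_{\mathcal C}}\bigl(e^{S_{\mathcal C,h}\mu^0_{\mathcal C}(\tilde f)}\bigr)
=\cosh\bigl(h\mu^0_{\mathcal C}(D)+\mu^0_{\mathcal C}(\tilde f)\bigr),
\]
which is exactly what makes the reduction to the shifted test function go through and also gives the cleanest form of your small-cluster estimate (each factor is $1+O(\mu_{\mathcal C}(D)^2)$). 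Both routes meet the same analytic obstacle -- controlling the small clusters in the $dist$-metric limit -- and you correctly flag it as the crux; the paper's organization simply ensures that this work is done once, for a single test function, rather than re-derived inside the $h>0$ limit. Your Cram\'er--Wold step to pass from \eqref{eq:thm22} to \eqref{eq:thm21} is correct and is what the paper leaves implicit in the sentence ``Since \eqref{eq:thm22} implies \eqref{eq:thm21}.'' One small caution: the Skorokhod-representation step for the collection $\{\mu^a_{\mathcal C}\}$ with the uniforms $U_{\mathcal C}$ attached is not needed and introduces some ambiguity (the $U_{\mathcal C}$ are indexed by clusters, which are not stable objects as $a\downarrow0$); the paper sidesteps this entirely because the $U_{\mathcal C}$ never enter the limit argument -- they are reintroduced at the very end as a representation device after the mgf identity is in hand.
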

\begin{remark}\label{rem:RN}
The Radon-Nikodym derivative \eqref{eq:RNC} can be shown to be the limit in $L_1$ of the corresponding lattice expressions and thus the continuum FK measure $\mathbb{P}^0_{D,f,h}$ is the weak limit of the lattice FK measures $\mathbb{P}^a_{D,f,h}$ as $a\downarrow 0$.
\end{remark}
\begin{remark}
Theorem \ref{thm2} and Remark \ref{rem:RN} can be extended to different boundary conditions on $D$ besides free and to the full plane field $\Phi^h_{\mathbb{R}^2}$. In the full plane case, one can replace a constant magnetic field by one which is zero outside $[-L,L]^2$, using the nonconstant field representation of the Appendix. The measure will only converge to a full plane measure weakly as $L\rightarrow\infty$. In the full plane, also the lattice FK measure will only converge weakly as $a\downarrow 0$.
\end{remark}

The next theorem is about the moment generating function for the largest renormalized cluster area.
\begin{theorem}\label{thm3}
Suppose $D$ is a simply-connected and bounded domain in $\mathbb{R}^2$ with piecewise smooth boundary. Then for any $h\geq0$ and $t\geq 0$
\begin{equation}
\mathbb{E}^0_{D,f,h}\left(\exp{\left(t\max_{\mathcal{C}\in\mathscr{C}(D,f,\cdot)}\mu^0_{\mathcal{C}}(D)\right)}\right)\leq \tilde{C}_2e^{\tilde{C}_3(t+h)^{16/15}},
\end{equation}
where $\tilde{C}_2, \tilde{C}_3\in(0,\infty)$ only depend on $D$.
\end{theorem}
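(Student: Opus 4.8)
The plan is to use the Radon--Nikodym description \eqref{eq:RNC} to reduce the claim to the $h=0$ ensemble, then to recognize the resulting product of hyperbolic cosines as an exponential moment of the total critical magnetization, whose growth exponent $16/15$ is furnished by \eqref{eq:mbd}. Let $\mathcal{C}^{\ast}$ be a cluster attaining $\max_{\mathcal{C}}\mu^0_{\mathcal{C}}(D)$ (the maximum is attained since the cluster masses tend to $0$). By \eqref{eq:RNC},
\begin{equation}
\mathbb{E}^0_{D,f,h}\bigl(e^{t\mu^0_{\mathcal{C}^{\ast}}(D)}\bigr)
=\frac{\mathbb{E}^0_{D,f,0}\bigl(e^{t\mu^0_{\mathcal{C}^{\ast}}(D)}\prod_{\mathcal{C}}\cosh(h\mu^0_{\mathcal{C}}(D))\bigr)}
{\mathbb{E}^0_{D,f,0}\bigl(\prod_{\mathcal{C}}\cosh(h\mu^0_{\mathcal{C}}(D))\bigr)} .
\end{equation}
Each factor $\cosh(h\mu^0_{\mathcal{C}}(D))$ is $\ge1$, so the denominator is $\ge1$ and can be dropped. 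In the numerator I would single out the factor indexed by $\mathcal{C}^{\ast}$ and apply the elementary inequalities $e^{tu}\cosh(hu)\le e^{(t+h)u}\le 2\cosh((t+h)u)$ for $u\ge0$, together with $\cosh(hu)\le\cosh((t+h)u)$ for every other cluster, obtaining the pointwise bound $e^{t\mu^0_{\mathcal{C}^{\ast}}(D)}\prod_{\mathcal{C}}\cosh(h\mu^0_{\mathcal{C}}(D))\le 2\prod_{\mathcal{C}}\cosh((t+h)\mu^0_{\mathcal{C}}(D))$. Hence the left side of Theorem \ref{thm3} is at most $2\,\mathbb{E}^0_{D,f,0}\bigl(\prod_{\mathcal{C}}\cosh((t+h)\mu^0_{\mathcal{C}}(D))\bigr)$.

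Next I would identify this last expectation with an Ising magnetization generating function. Taking $h=0$ and the constant test function $\tilde f\equiv c$ (which lies in $C^{\infty}(\bar D)$) in \eqref{eq:thm22}, the identity $\cosh(x)\bigl(\tfrac{1+\tanh x}{2}e^{y}+\tfrac{1-\tanh x}{2}e^{-y}\bigr)=\cosh(x+y)$ collapses the right-hand side of \eqref{eq:thm22}, giving, for $c\ge0$,
\[
\mathbb{E}^0_{D,f,0}\Bigl(\prod_{\mathcal{C}}\cosh\bigl(c\,\mu^0_{\mathcal{C}}(D)\bigr)\Bigr)=E^0_{D,f,0}\bigl(e^{c\,\Phi^0_D(\mathbf 1)}\bigr),
\]
where $\mathbf 1$ is the constant test function $1$, so $\Phi^0_D(\mathbf 1)=\sum_{\mathcal{C}}S_{\mathcal{C},0}\mu^0_{\mathcal{C}}(D)$ is the total renormalized magnetization. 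It remains to prove $E^0_{D,f,0}(e^{c\,\Phi^0_D(\mathbf 1)})\le \tilde C\,e^{\tilde C'c^{16/15}}$ with $\tilde C,\tilde C'$ depending only on $D$; the theorem then follows with $c=t+h$, $\tilde C_2=2\tilde C$, $\tilde C_3=\tilde C'$.

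For this last bound I would go through the lattice. For every $c'\ge0$ and all small $a$ (so that $c'a^{15/8}\le1$), differentiating in the field the log-partition function of the critical Ising model on $D^a$ with free boundary conditions, using the monotonicity $\langle\sigma_x\rangle^{\mathrm{free}}_{D^a,b}\le\mathcal{M}(b)$ (Griffiths) and the bound $\mathcal{M}(b)\le B_2 b^{1/15}$ of \eqref{eq:mbd}, yields
\[
\log\langle e^{c'a^{15/8}\sum_{x\in D^a}\sigma_x}\rangle^{\mathrm{free}}_{D^a,0}\;=\;\log\frac{Z^a_{D^a,\mathrm{free}}(c'a^{15/8})}{Z^a_{D^a,\mathrm{free}}(0)}\;\le\;|D^a|\!\int_0^{c'a^{15/8}}\!\!\mathcal{M}(b)\,db\;\le\;\tfrac{15}{16}B_2\,\bigl(|D^a|a^{2}\bigr)\,(c')^{16/15},
\]
and $|D^a|a^{2}\to|D|$. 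Applied with $c'=c+1$ this gives uniform integrability of $\{e^{c\,a^{15/8}\sum_{x\in D^a}\sigma_x}\}_{a}$, so that, combined with the convergence in law $a^{15/8}\sum_{x\in D^a}\sigma_x\Rightarrow\Phi^0_D(\mathbf 1)$ of \cite{CGN15}, one gets $E^0_{D,f,0}(e^{c\,\Phi^0_D(\mathbf 1)})=\lim_{a\downarrow0}Z^a_{D^a,\mathrm{free}}(ca^{15/8})/Z^a_{D^a,\mathrm{free}}(0)\le e^{|D|B_2c^{16/15}}$. (Alternatively, the first two paragraphs can be run verbatim on the lattice, bounding $\mathbb{E}^a_{D,f,h}(e^{t\max_{\mathcal{C}}\mu^a_{\mathcal{C}}(D)})$ uniformly in $a$, and the bound transferred to the continuum via Remark \ref{rem:RN} and the portmanteau inequality, ``largest total mass'' being a nonnegative functional continuous for the metric $dist$ of \eqref{eq:metric}.)

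I expect the genuine obstacle to be this third step: obtaining the exponential moment of the total magnetization with the sharp exponent $16/15$ \emph{uniformly} in the mesh $a$, and rigorously justifying the exchange of $a\downarrow0$ with the expectation (or, in the alternative route, the continuity and uniform integrability required to pass the lattice bound to the limit). The rest is the $\cosh$ algebra of the first two paragraphs together with the coupling supplied by Theorem \ref{thm2}. One finally checks that $\tilde C_2,\tilde C_3$ depend on $D$ alone, which holds because the only $D$-dependence in the estimates enters through $|D|=\lim_{a\downarrow0}|D^a|a^{2}$, the constant $B_2$ being universal.
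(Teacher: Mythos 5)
Your argument follows the paper's proof of Theorem~\ref{thm3} essentially step for step: the Radon--Nikodym reduction via \eqref{eq:RNC}, dropping the denominator (which is $\ge 1$), the $\cosh$ algebra bounding the integrand by $2\prod_{\mathcal{C}}\cosh((t+h)\mu^0_{\mathcal{C}}(D))$, and the identification of this product with $E^0_{D,f,0}\bigl(e^{(t+h)\Phi^0_D(1_D)}\bigr)$ (your route through \eqref{eq:thm22} at $h=0$ is just Proposition~\ref{prop:mgfcon}, which the paper invokes directly). The only difference is that where the paper simply cites Proposition 2.2 and Theorem 1.2 of \cite{CGN16} for the final bound $E^0_{D,f,0}(e^{c\,\Phi^0_D(1_D)})\le\tilde C e^{\tilde C'c^{16/15}}$, you sketch a self-contained lattice derivation of that same inequality from the magnetization bound \eqref{eq:mbd}; the sketch is correct.
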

\begin{remark}
This bound on the moment generating function shows (by an exponential Chebyshev inequality) that the maximum cluster area has a tail decaying like $e^{-\tilde{C}x^{16}}$ for some constant $\tilde{C}\in(0,\infty)$.
\end{remark}
\begin{remark}
Propositions \ref{prop:mgfbd} and \ref{prop:mgfbdh} (see also Proposition \ref{prop:mgfcbd}) in Section \ref{sec:coupling} provide a lattice analogue to Theorem \ref{thm3}, but where $t^{16/15}$ is replaced by $t^2$; the upper bound there is uniform as the lattice spacing $a\downarrow0$. See \cite{Kis14} for related results about critical Bernoulli percolation.
\end{remark}

We conclude this section with a theorem that improves the result of \cite{CGN14} that on $\mathbb{Z}^2$, for small $H$,
\begin{equation}
B_1H^{1/15}\leq \langle \sigma_0\rangle_{1,H}\leq B_2 H^{1/15}
\end{equation}
for some $B_1,B_2\in (0,\infty)$. The proof is so short that we include it here in this section.
\begin{theorem}\label{thm4}
There exists $B\in (0,\infty)$ such that
\begin{equation}
\lim_{H\downarrow 0}\frac{\langle \sigma_0\rangle_{1,H}}{H^{1/15}}=B.
\end{equation}
\end{theorem}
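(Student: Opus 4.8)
The plan is to derive the stated limit from a simple lattice‑rescaling identity, using as the only substantial input the existence and convergence of the near‑critical magnetization field of \cite{CGN15,CGN16}. Rescaling all coordinates by $a$ identifies the critical Ising model on $\mathbb Z^2$ with field $H$ with the one on $a\mathbb Z^2$ carrying the \emph{same} field $H$; writing $H=a^{15/8}h$, the latter is $P^a_h$. Hence $\langle\sigma_0\rangle_{1,H}=\langle\sigma_0\rangle_{a,h}$ whenever $a^{15/8}h=H$. Fix $h>0$ and put $a=a(H):=(H/h)^{8/15}$, so $a\downarrow0$ as $H\downarrow0$; then
\[
\frac{\langle\sigma_0\rangle_{1,H}}{H^{1/15}}=h^{-1/15}\bigl(a^{-1/8}\langle\sigma_0\rangle_{a,h}\bigr).
\]
Since $H\mapsto a(H)$ is an increasing homeomorphism of a right‑neighbourhood of $0$ onto another, it is enough to show that $G(h):=\lim_{a\downarrow0}a^{-1/8}\langle\sigma_0\rangle_{a,h}$ exists in $(0,\infty)$ for a single $h>0$: then $\langle\sigma_0\rangle_{1,H}/H^{1/15}\to h^{-1/15}G(h)$, the limit cannot depend on $h$ (the left side does not), so it is the desired constant $B$, and $B\in(0,\infty)$ since \eqref{eq:mbd} forces $a^{-1/8}\langle\sigma_0\rangle_{a,h}=\langle\sigma_0\rangle_{1,a^{15/8}h}\,a^{-1/8}\in[B_1h^{1/15},B_2h^{1/15}]$ for small $a$.

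To produce $G(h)$ I would use the full‑plane near‑critical magnetization field $\Phi^h_{\mathbb R^2}$ — the $h>0$ counterpart of the $h=0$ field of \cite{CGN15} — realized as the $a\downarrow0$ limit of $\Phi^{a,h}_{\mathbb R^2}:=a^{15/8}\sum_{x\in a\mathbb Z^2}\sigma_x\delta_x$ under the translation‑invariant infinite‑volume measure $P^a_h$ (for $h>0$ this full‑plane field is obtained from the bounded‑domain statement of \cite{CGN16} by letting the domain grow, the exponential decay at $h>0$ in Theorem \ref{thm:upper} controlling the exchange of limits). Fixing $\tilde f\in C^\infty_c(\mathbb R^2)$ with $\int\tilde f=1$, translation invariance gives
\[
E\bigl[\Phi^{a,h}_{\mathbb R^2}(\tilde f)\bigr]=\langle\sigma_0\rangle_{a,h}\,a^{15/8}\!\!\sum_{x\in a\mathbb Z^2}\!\tilde f(x)=\bigl(a^{-1/8}\langle\sigma_0\rangle_{a,h}\bigr)\Bigl(a^2\!\!\sum_{x\in a\mathbb Z^2}\!\tilde f(x)\Bigr),
\]
and the final factor converges to $\int\tilde f=1$. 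Convergence in law upgrades to convergence of the first moment once $\{\Phi^{a,h}_{\mathbb R^2}(\tilde f)\}_a$ is uniformly integrable, which follows from a uniform second‑moment bound: splitting $\langle\sigma_x\sigma_y\rangle_{a,h}=\langle\sigma_x;\sigma_y\rangle_{a,h}+\langle\sigma_0\rangle_{a,h}^2$, the truncated term obeys $\langle\sigma_x;\sigma_y\rangle_{a,h}\le\langle\sigma_x;\sigma_y\rangle_{a,0}\le Ca^{1/4}|x-y|^{-1/4}$ (GHS, and the classical two‑point bound at $\beta_c$), so $a^{15/4}\sum_{x,y}\langle\sigma_x;\sigma_y\rangle_{a,h}\tilde f(x)\tilde f(y)$ stays bounded, and the remaining term equals $(a^{-1/8}\langle\sigma_0\rangle_{a,h})^2(a^2\sum_x\tilde f(x))^2$, bounded by \eqref{eq:mbd}. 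Dividing then gives $a^{-1/8}\langle\sigma_0\rangle_{a,h}\to E[\Phi^h_{\mathbb R^2}(\tilde f)]=:G(h)$, in $(0,\infty)$ by the first paragraph; together with that paragraph this proves the theorem, with $B=h^{-1/15}G(h)$ for any $h>0$.

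The one ingredient that is not soft is the one borrowed from \cite{CGN15,CGN16}: a translation‑invariant full‑plane limit of the $h>0$ magnetization field, together with enough control to read its first moment off the lattice fields (the passage from the bounded‑domain convergence of \cite{CGN16} to $\mathbb R^2$ is where the $h>0$ exponential decay / finite correlation length of Theorem \ref{thm:upper} enters, and it is the step I would expect to need the most care). If one prefers to use only the bounded‑domain convergence as stated, an alternative is to sandwich $\langle\sigma_0\rangle_{1,H}$ between free‑ and plus‑boundary‑condition magnetizations in a box of side $\sim cH^{-8/15}$ — which after rescaling is $\langle\sigma_0\rangle^{\mathrm{free}/+}_{D^{a'},1}$ on the fixed domain $D=[-c,c]^2$ with $a'=H^{8/15}\downarrow0$ — then use GKS monotonicity and the scaling covariance $\mathfrak m_{\lambda D}(0;h)=\lambda^{-1/8}\mathfrak m_{D}(0;\lambda^{15/8}h)$ of the limiting profile $\mathfrak m_D(\cdot\,;h)=\lim_{a\downarrow0}a^{-1/8}\langle\sigma_{\cdot}\rangle^{\mathrm{free}}_{D^a,h}$ to send $c\to\infty$, checking that the free and $+$ bulk limits coincide; that last check again uses the finite continuum correlation length at $h>0$. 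The book‑keeping of boundary conditions and box sizes is the main nuisance of this second route, which the full‑plane argument avoids.
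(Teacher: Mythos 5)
Your proposal takes essentially the same approach as the paper: both proofs use the lattice rescaling identity $\langle\sigma_0\rangle_{1,H}=\langle\sigma_0\rangle_{a,h}$ with $H=a^{15/8}h$ to rewrite $\langle\sigma_0\rangle_{1,H}/H^{1/15}$ as (a constant times) the mean of a rescaled magnetization field, and then invoke the convergence in distribution of the near-critical magnetization field $\Phi^{a,h}$ from \cite{CGN15,CGN16} together with a moment bound to upgrade convergence in law to convergence of first moments. The paper simply fixes $h=1$, tests against $1_Q$ rather than a smooth compactly supported $\tilde f$, and cites Theorem~1.4 of \cite{CGN16} for full-plane convergence in distribution and Proposition~3.5 of \cite{CGN15} for the moment generating function bound (so the bounded-domain-to-full-plane passage you were worried about is already available as a citation, and your explicit second-moment argument via GHS and the $|x-y|^{-1/4}$ decay is a self-contained substitute for the MGF bound).
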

\begin{proof}
Letting $a=H^{8/15}$, $h=1$, using translation invariance, writing $\Phi^{a,h}$ for $\Phi^{a,h}_{\mathbb{R}^2}$, $1_Q$ for the indicator of $Q=[-1/2,1/2]^2$ and $\mathcal{N}(a)$ for the cardinality of $a\mathbb{Z}^2\cap Q$, one has
\begin{equation}
\langle \Phi^{a,1}(1_Q)\rangle_{a,1}=a^{15/8}\mathcal{N}(a)\langle\sigma_0\rangle_{1,H=a^{15/8}}=H^{-1/15}a^2\mathcal{N}(a)\langle\sigma_0\rangle_{1,H}.
\end{equation}
Since $\mathcal{N}(a)/(1/a)^2\rightarrow 1$ as $a\downarrow 0$, it follows that
\begin{equation}\label{eq:mag}
\lim_{H\downarrow 0}\frac{\langle\sigma_0\rangle_{1,H}}{H^{1/15}}=\lim_{a\downarrow 0}\langle \Phi^{a,1}(1_Q)\rangle_{a,1}=E^0_{h=1}\left(\Phi^{h=1}(1_Q)\right),
\end{equation}
where we have dropped the subscript $D$ in $\Phi^h_{D}$ when $D=\mathbb{R}^2$. The existence of the second limit of \eqref{eq:mag} follows from convergence in distribution of $\Phi^{a,1}$ (see Theorem 1.4 of \cite{CGN16}) and moment generating function bounds (see Proposition 3.5 of \cite{CGN15}).
\end{proof}

\section{Preliminary definitions and results}\label{secpre}
In this section, we give the basic definitions and properties of the Ising model and its coupling to the FK random cluster model. This basically follows the presentation in \cite{CJN17} which we repeat here to make this paper self-contained.
\subsection{Ising model and FK percolation}
In this subsection, our definitions and terminology (especially after the ghost vertex is introduced below) follow those of \cite{Ale98}. With vertex set $a\mathbb{Z}^2$, we write $a\mathbb{E}^2$ for the set of nearest neighbour edges of $a\mathbb{Z}^2$. For any finite $D\subseteq \mathbb{R}^2$, let $D^a:=a\mathbb{Z}^2\cap D$ be the set of points of $a\mathbb{Z}^2$ in $D$, and call it the \textit{$a$-approximation} of $D$. For $\Lambda\subseteq a\mathbb{Z}^2$, define $\Lambda^C:=a\mathbb{Z}^2\setminus\Lambda$,
\[\partial_{in}\Lambda:=\{z\in a\mathbb{Z}^2: z\in \Lambda, z \text{ has a nearest neighbor in }\Lambda^C \}, \]
\[\partial_{ex}\Lambda:=\{z\in a\mathbb{Z}^2: z\notin \Lambda, z \text{ has a nearest neighbor in }\Lambda \}, \]
\[\overline{\Lambda}:=\Lambda\cup \partial_{ex}\Lambda.\]
Let $\mathscr{B}(\Lambda)$ be the set of all edges $\{z,w\}\in a\mathbb{E}^2$ with $z,w\in \Lambda$, and $\overline{\mathscr{B}}(\Lambda)$ be the set of all edges $\{z,w\}$ with $z$ or $w\in \Lambda$. We will consider the extended graph $G=(V,E)$ where $V=a\mathbb{Z}^2\cup\{g\}$ ($g$ is usually called the \textit{ghost vertex} \cite{Gri67}) and $E$ is the set $a\mathbb{E}^2 \cup \{\{z,g\}:z\in a\mathbb{Z}^2\}$. The edges in $a\mathbb{E}^2$ are called \textit{internal edges} while $\{\{z,g\}:z\in a\mathbb{Z}^2\}$ are called \textit{external edges}. Let $\mathscr{E}(\Lambda)$ be the set of all external edges with an endpoint in $\Lambda$, i.e.,
\[\mathscr{E}(\Lambda):=\left\{\left\{z,g\right\}:z\in \Lambda\right\}.\]

Let $\Lambda_L:=[-L,L]^2$ and $\Lambda^a_{L}$ be its $a$-approximation. The classical Ising model at inverse (critical) temperature $\beta_c$ on $\Lambda^a_{L}$ with boundary condition $\eta\in\{-1,+1\}^{\partial_{ex}\Lambda_L^a}$ and external field $a^{\frac{15}{8}}h\geq 0$ is the probability measure $P_{\Lambda_L,\eta, h}^{a}$ on $\{-1,+1\}^{\Lambda^a_{L}}$  such that for any $\sigma\in \{-1,+1\}^{\Lambda^a_{L}}$,
\begin{equation}\label{eqIsingdef}
P_{\Lambda_L,\eta, h}^{a}(\sigma)=\frac{1}{Z^a_{\Lambda_L,\eta,h}}e^{\beta_c\sum_{\{u,v\}}\sigma_u\sigma_v+\beta_c\sum_{\{u,v\}:u\in\Lambda_L^a, v\in\partial_{ex} \Lambda_L^a}\sigma_u\eta_v+a^{15/8}h\sum_{u\in\Lambda^a_{L}}\sigma_u},
\end{equation}
where the first sum is over all nearest neighbor pairs (i.e., $|u-v|=a$) in $\Lambda^a_{L}$, and $Z^a_{\Lambda_L,\eta,h}$ is the partition function (which is the normalization constant needed to make this a probability measure). $P^a_{\Lambda_L,f, h}$ denotes the probability measure with free boundary conditions --- i.e., where we omit the second sum in \eqref{eqIsingdef}. $P^a_{\Lambda_L,+,h}$ (respectively, $P^a_{\Lambda_L,-,h}$) denotes the probability measure with plus (respectively, minus) boundary condition, i.e., $\eta\equiv +1$ (respectively, $\eta\equiv -1$) in \eqref{eqIsingdef}. Below we will also consider Ising measures $P^a_{D,\rho,h}$ for more general domains $D\subseteq\mathbb{R}^2$, defined in the obvious way.

It is known that $P_{\Lambda_L,\eta, h}^{a}$ has a unique infinite volume limit as $L\rightarrow\infty$, which we denote by $P_h^a$. Note that this limiting measure does not depend on the choice of boundary conditions (see, e.g., Theorem 1 of \cite{Leb72} or the theorem in the appendix of \cite{Rue72}).

The FK (Fortuin and Kasteleyn) percolation model at $\beta_c$ on $\Lambda^a_{L}$ with boundary condition $\rho\in\{0,1\}^{\overline{\mathscr{B}}\left((\Lambda^a_L)^C\right)\cup\mathscr{E}\left((\Lambda^a_L)^C\right)}$ and with external field $a^{\frac{15}{8}}h\geq0$ is the probability measure $\mathbb{P}_{\Lambda_L,\rho,h}^a$ on $\{0,1\}^{\mathscr{B}(\Lambda^a_L)\cup\mathscr{E}(\Lambda^a_L)}$ such that for any $\omega\in\{0,1\}^{\mathscr{B}(\Lambda^a_L)\cup\mathscr{E}(\Lambda^a_L)}$,
\begin{align}\label{eqFKdef}
\mathbb{P}_{\Lambda_L,\rho,h}^a(\omega)&=\frac{2^{\mathcal{K}\left(\Lambda_L^a, (\omega\rho)_{\Lambda^a_L}\right)}}{\hat{Z}^a_{\Lambda_L,\rho,h}}\prod_{e\in\mathscr{B}(\Lambda^a_L)}(1-e^{-2\beta_c})^{\omega(e)}(e^{-2\beta_c})^{1-\omega(e)}\nonumber\\
&\quad\times\prod_{e\in\mathscr{E}(\Lambda^a_L)}(1-e^{-2a^{15/8}h})^{\omega(e)}(e^{-2a^{15/8}h})^{1-\omega(e)},
\end{align}
where $(\omega\rho)_{\Lambda^a_L}$ denotes the configuration which coincides with $\omega$ on $\mathscr{B}(\Lambda^a_L)\cup\mathscr{E}(\Lambda^a_L)$ and with $\rho$ on $\overline{\mathscr{B}}\left((\Lambda^a_L)^C\right)\cup\mathscr{E}\left((\Lambda^a_L)^C\right)$, $\mathcal{K}\left(\Lambda_L^a, (\omega\rho)_{\Lambda^a_L}\right)$ denotes the number of clusters in $(\omega\rho)_{\Lambda^a_L}$ which intersect $\Lambda_L^a$ and do not contain $g$, and $\hat{Z}^a_{\Lambda_L,\rho,h}$ is the partition function. An edge $e$ is said to be \textit{open} if $\omega(e)=1$, otherwise it is said to be \textit{closed}. $\mathbb{P}_{\Lambda_L,\rho,h}^a$ is also called the \textit{random-cluster measure} (with cluster weight $q=2$) at $\beta_c$ on $\Lambda^a_{L}$ with boundary condition $\rho$ with external field $a^{\frac{15}{8}}h\geq 0$. $\mathbb{P}^a_{\Lambda_L,f, h}$ (respectively, $\mathbb{P}^a_{\Lambda_L,\bar{w},h}$) denotes the probability measure with free (respectively, wired) boundary conditions, i.e., $\rho\equiv 0$ (respectively, $\rho\equiv 1$) in \eqref{eqFKdef}. In this paper, we use the notation $\mathbb{P}^a_{\Lambda_L,w,h}$ for the boundary condition $\rho$ with $\rho|_{\overline{\mathscr{B}}\left((\Lambda^a_L)^C\right)}\equiv 1$ and $\rho|_{\mathscr{E}\left((\Lambda^a_L)^C\right)}\equiv0$ where $\rho|_{\overline{\mathscr{B}}\left((\Lambda^a_L)^C\right)}$ (respectively, $\rho|_{\mathscr{E}\left((\Lambda^a_L)^C\right)}$) is the restriction of $\rho$ to $\overline{\mathscr{B}}\left((\Lambda^a_L)^C\right)$ (respectively, $\mathscr{E}\left((\Lambda^a_L)^C\right)$). In \eqref{eqFKdef}, we always assume $\rho|_{\mathscr{E}\left((\Lambda^a_L)^C\right)}\equiv0$ when $h=0$. Below we will also consider FK measures $\mathbb{P}^a_{D,\rho,h}$ for more general domains $D\subseteq\mathbb{R}^2$, defined in the obvious way.

It is also known that $\mathbb{P}_{\Lambda_L,\rho,h}^a$ has a unique infinite volume limit as $L\rightarrow\infty$, which we denote by $\mathbb{P}_h^a$. Again this limiting measure does not depend on the choice of boundary conditions. The reader may refer to \cite{Gri06} for more details in the case $h=0$; the proof for $h>0$ is similar.

Suppose $D\subseteq \mathbb{R}^2$ is bounded. For any $\omega\in \{0,1\}^{\mathscr{B}(D^a)}$, let $\mathscr{C}(D^a,\rho,\omega)$ denote the set of clusters of $(\omega\rho)_{D^a}$ which intersect $D^a$. For $\mathcal{C}\in \mathscr{C}(D^a,\rho,\omega)$, let $|\mathcal{C}|$ denote the number of vertices in $\mathcal{C}\cap D^a$. Then the marginal of $\mathbb{P}^a_{D,\rho,h}$ on $\mathscr{B}(D^a)$ (see, e.g., pp. 447-448 of \cite{Ale98}) is
\begin{equation}\label{eq:FKm}
\tilde{\mathbb{P}}^a_{D,\rho,h}(\omega)=\frac{(1-e^{-2\beta_c})^{o(\omega)}(e^{-2\beta_c})^{c(\omega)}\prod_{\mathcal{C}\in \mathscr{C}(D^a,\rho,\omega), g\notin \mathcal{C}}\left(1+e^{-2ha^{15/8}|\mathcal{C}|}\right)}{\tilde{Z}^a_{D^a,\rho,h}},
\end{equation}
where $\tilde{Z}^a_{D^a,\rho,h}$ is the partition function, and $o(\omega)$ and $c(\omega)$ denote the number of open and closed edges of $\omega$ respectively. In this paper, we only consider $\tilde{\mathbb{P}}^a_{D,\rho,h}$ with $\rho$ satisfying $\rho|_{\mathscr{E}\left((D^a)^C\right)}\equiv0$. In particular, in such a case one can drop the condition $g\notin \mathcal{C}$ in \eqref{eq:FKm}.

\subsection{Basic properties}
The Edwards-Sokal coupling \cite{ES88} couples the Ising model and FK percolation. Let $\hat{\mathbb{P}}_h^a$ be that coupling measure of $P_h^a$ and $\mathbb{P}_h^a$ defined on $\{-1,+1\}^V\times\{0,1\}^E$. The marginal of $\hat{\mathbb{P}}_h^a$ on $\{-1,+1\}^V$ is $P_h^a$, and the marginal of $\hat{\mathbb{P}}_h^a$ on $\{0,1\}^E$ is $\mathbb{P}_h^a$. The conditional distribution of the Ising spin variables given a realization of the FK bond variables can be realized by tossing independent fair coins --- one for each FK-open cluster not containing $g$ --- and then setting $\sigma_x$ for all vertices $x$ in the cluster to $+1$ for heads and $-1$ for tails. For $x$ in the ghost cluster, $\sigma_x= +1$ (for $h>0$). The coupling we just described is for the infinite graph $G=(V,E)$, but it is also valid for more general graphs (infinite and finite). A different coupling for $h\neq 0$ between \emph{internal} FK edges and spin variables is given in Propositions~\ref{propFKcon} and \ref{prop:tanh} below as well as in the Appendix when the magnetic field $h_x$ at vertex $x$ may vary with $x$ and not be a constant $h$. As mentioned above, this coupling already appears implicitly in \cite{CV16}.

For any $u,v\in V$, we write $u\longleftrightarrow v$ for the event that there is a path of FK-open edges that connects $u$ and $v$, i.e., a path $u=z_0, z_1, \ldots, z_n=v$ with $e_i=\{z_i, z_{i+1}\}\in E$ and $\omega(e_i)=1$ for each $0\leq i<n$. For any $u,v\in a\mathbb{Z}^2$, we write $u\overset{a\mathbb{Z}^2}\longleftrightarrow v$ if $u\longleftrightarrow v$ and each vertex on this path is in $a\mathbb{Z}^2$. For any $A, B\subseteq V$, we write $A\longleftrightarrow B$ if there is some $u\in A$ and $v\in B$ such that $u\longleftrightarrow v$. $A\centernot\longleftrightarrow B$ denotes the complement of $A\longleftrightarrow B$. The following identity, immediate from the Edwards-Sokal coupling, is essential.

\begin{lemma}\label{lemES}
\begin{equation}\label{eqES}
\langle\sigma_x;\sigma_y\rangle_{a,h}=\mathbb{P}_h^a(x\longleftrightarrow y)-\mathbb{P}_h^a(x\longleftrightarrow g)\mathbb{P}_h^a(y\longleftrightarrow g).
\end{equation}
\end{lemma}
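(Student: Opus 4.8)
The plan is to work directly with the infinite-volume Edwards--Sokal coupling $\hat{\mathbb{P}}_h^a$ of $P_h^a$ and $\mathbb{P}_h^a$ on $\{-1,+1\}^V\times\{0,1\}^E$, and to evaluate both $\langle\sigma_x\sigma_y\rangle_{a,h}$ and $\langle\sigma_x\rangle_{a,h}\langle\sigma_y\rangle_{a,h}$ by conditioning on the FK configuration $\omega$. Recall that, conditionally on $\omega$, the spins are produced by assigning $+1$ to every vertex of the open cluster of $g$ and, for every other open cluster, a common value in $\{-1,+1\}$ chosen by an independent fair coin. Write $\hat{\mathbb{E}}_h^a$ for the expectation under $\hat{\mathbb{P}}_h^a$.

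First I would compute the one-point function. Conditionally on $\omega$, if $x\longleftrightarrow g$ then $\sigma_x=+1$ deterministically, while if $x\centernot\longleftrightarrow g$ then $\sigma_x$ is the fair coin attached to its cluster and has conditional mean $0$. Hence $\hat{\mathbb{E}}_h^a[\sigma_x\mid\omega]=\mathbf{1}_{\{x\longleftrightarrow g\}}$, and taking expectations gives $\langle\sigma_x\rangle_{a,h}=\mathbb{P}_h^a(x\longleftrightarrow g)$; the same holds for $y$. This produces the subtracted term $\mathbb{P}_h^a(x\longleftrightarrow g)\,\mathbb{P}_h^a(y\longleftrightarrow g)$ in \eqref{eqES}.

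Next I would compute the two-point function by a short case analysis on $\omega$. If $x\longleftrightarrow y$ (possibly via a path through $g$), then $x$ and $y$ lie in the same open cluster: either both are in the cluster of $g$, so $\sigma_x=\sigma_y=+1$, or neither is, so $\sigma_x=\sigma_y$ equals their common fair coin; in both cases $\sigma_x\sigma_y=1$. If instead $x\centernot\longleftrightarrow y$, then $x$ and $y$ lie in distinct clusters, and moreover at most one of them can be connected to $g$ (otherwise $x$ and $y$ would be joined through $g$); thus $\sigma_x\sigma_y$ is either a product of two independent symmetric signs or the product of $+1$ with one symmetric sign, and in either case $\hat{\mathbb{E}}_h^a[\sigma_x\sigma_y\mid\omega]=0$. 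Therefore $\hat{\mathbb{E}}_h^a[\sigma_x\sigma_y\mid\omega]=\mathbf{1}_{\{x\longleftrightarrow y\}}$, so $\langle\sigma_x\sigma_y\rangle_{a,h}=\mathbb{P}_h^a(x\longleftrightarrow y)$, and subtracting the two displays yields \eqref{eqES}.

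There is no substantial obstacle; once the coupling is available the argument is routine. The only point deserving care is the bookkeeping around the ghost vertex in the two-point step — namely the observation that $\{x\longleftrightarrow g\}$ and $\{y\longleftrightarrow g\}$ cannot both hold on $\{x\centernot\longleftrightarrow y\}$, and the convention that $\{x\longleftrightarrow y\}$ in \eqref{eqES} includes paths through $g$. If one prefers not to invoke the infinite-volume coupling, the same conditioning argument establishes the identity on each box $\Lambda_L^a$ (with, say, free boundary conditions), and one then passes to the limit using $P^a_{\Lambda_L,f,h}\Rightarrow P_h^a$, $\mathbb{P}^a_{\Lambda_L,f,h}\Rightarrow\mathbb{P}_h^a$, and the fact that the relevant connection events are increasing limits of finite-volume connection events.
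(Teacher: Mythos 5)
Your proof is correct and spells out precisely the computation that the paper summarizes in the phrase ``immediate from the Edwards--Sokal coupling'': conditioning on $\omega$ gives $\hat{\mathbb{E}}_h^a[\sigma_x\mid\omega]=\mathbf{1}_{\{x\leftrightarrow g\}}$ and $\hat{\mathbb{E}}_h^a[\sigma_x\sigma_y\mid\omega]=\mathbf{1}_{\{x\leftrightarrow y\}}$ (with the ghost included in the connectivity), and subtracting yields \eqref{eqES}. The bookkeeping you flag — that on $\{x\centernot\longleftrightarrow y\}$ at most one of $x,y$ can touch $g$ — is exactly the point that makes the two-point step go through, so this is the same argument as the paper's, just written out.
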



Let $\mathbb{P}^a:=\mathbb{P}^a_{h=0}$. By standard comparison inequalities for FK percolation (Proposition 4.28 in \cite{Gri06}), one has
\begin{lemma}\label{lem3}
For any $h\geq 0$, $\mathbb{P}_h^a$ stochastically dominates $\mathbb{P}^a$.
\end{lemma}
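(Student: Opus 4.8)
The plan is to recognise $\mathbb{P}_h^a$ as a genuine random-cluster measure with cluster weight $q=2$ on the extended graph $G=(V,E)$ carrying the ghost vertex $g$, and then to apply the stochastic monotonicity of such measures in their edge-parameters, which holds precisely because $q=2\ge 1$. The underlying picture is that passing from $h=0$ to $h>0$ changes nothing on the internal edges and merely raises the occupation probability of each external edge from $0$ to $1-e^{-2a^{15/8}h}$, so the comparison inequalities should force the whole configuration to become stochastically larger.

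I would first work at finite volume with, say, free boundary conditions, so that the relevant graph has vertex set $\Lambda_L^a\cup\{g\}$ and edge set $\mathscr{B}(\Lambda_L^a)\cup\mathscr{E}(\Lambda_L^a)$; the choice of boundary condition is immaterial since it disappears in the limit $L\to\infty$. The one point requiring care is that $\mathbb{P}^a_{\Lambda_L,f,h}$ is, up to a harmless constant absorbed into the partition function, an honest random-cluster measure: on this finite graph the ghost lies in exactly one cluster of any configuration $\omega$, so the exponent $\mathcal{K}$ appearing in \eqref{eqFKdef} equals $k(\omega)-1$, where $k(\omega)$ is the total number of clusters, whence $2^{\mathcal{K}}=\tfrac12\,2^{k(\omega)}$. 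Thus $\mathbb{P}^a_{\Lambda_L,f,h}$ is exactly the $q=2$ random-cluster measure on $\Lambda_L^a\cup\{g\}$ with edge-parameter $p:=1-e^{-2\beta_c}$ on every internal edge and $p_h:=1-e^{-2a^{15/8}h}$ on every external edge.

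Now $\mathbb{P}^a_{\Lambda_L,f,h}$ and $\mathbb{P}^a_{\Lambda_L,f,0}$ are random-cluster measures on the same graph, with the same $q=2\ge 1$ and the same internal edge-parameters, differing only in that the external edge-parameters are raised from $p_0=0$ to $p_h>0$. By the standard comparison inequalities for random-cluster measures with $q\ge 1$, which in particular give stochastic monotonicity in each edge-parameter (Proposition 4.28 in \cite{Gri06}), $\mathbb{P}^a_{\Lambda_L,f,h}$ stochastically dominates $\mathbb{P}^a_{\Lambda_L,f,0}$ on $\{0,1\}^{\mathscr{B}(\Lambda_L^a)\cup\mathscr{E}(\Lambda_L^a)}$. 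Letting $L\to\infty$, both sides converge weakly to $\mathbb{P}_h^a$ and to $\mathbb{P}^a=\mathbb{P}^a_{h=0}$ respectively (independently of boundary conditions, as recalled above), and stochastic domination is preserved under weak limits since it may be tested against bounded increasing cylinder functions; this yields $\mathbb{P}_h^a\succeq\mathbb{P}^a$. The argument is essentially routine once the formalism of this section is in place; the only step I expect to need genuine attention — the closest thing here to an obstacle — is the bookkeeping just described, namely confirming that the ghost-cluster convention of \eqref{eqFKdef} (only clusters avoiding $g$ are counted) really does turn $\mathbb{P}^a_{\Lambda_L,f,h}$ into a bona fide random-cluster measure, so that Proposition 4.28 applies verbatim.
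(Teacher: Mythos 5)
Your proof is correct and takes the same route as the paper, which simply cites Proposition 4.28 of \cite{Gri06} without elaboration. The bookkeeping you identify as the one delicate point — that $\mathcal{K}=k(\omega)-1$ because the ghost always lies in exactly one cluster, so that \eqref{eqFKdef} is a bona fide $q=2$ random-cluster measure with edge-dependent parameters on the extended graph, and raising $h$ merely raises the external-edge parameters from $0$ to $1-e^{-2a^{15/8}h}$ — is exactly what makes the comparison inequalities apply verbatim, and your finite-volume plus weak-limit argument is the standard and correct way to pass this to $\mathbb{P}_h^a$ and $\mathbb{P}^a$.
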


The following lemma is about the one-arm exponent for FK percolation with $h=0$. It is an immediate consequence of Lemma 5.4 of \cite{DCHN11}.
\begin{lemma}[\cite{DCHN11}]\label{lem1arm}
There exist constants $\tilde{C}_1,C_1$, independent of $a$, such that for each $a\in(0,1]$ and any boundary condition $\rho\in\{0,1\}^{\overline{\mathscr{B}}\left((\Lambda^a_1)^C\right)\cup\mathscr{E}\left((\Lambda^a_1)^C\right)}$,
\[\tilde{C}_1 a^{1/8}\leq \mathbb{P}^a_{\Lambda_1,\rho,h=0}(0\longleftrightarrow\partial_{in}\Lambda_1^a)\leq C_1a^{1/8}.\]
\end{lemma}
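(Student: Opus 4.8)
The plan is to reduce the claim, by the exact scale covariance of the critical $q=2$ random-cluster model, to the one-arm estimate for the FK-Ising model on $\mathbb{Z}^2$, and then to combine the RSW-type bounds of \cite{DCHN11} with the comparison inequality between boundary conditions.

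First I would apply the graph isomorphism $x\mapsto x/a$ from $(a\mathbb{Z}^2,a\mathbb{E}^2)$ onto $(\mathbb{Z}^2,\mathbb{E}^2)$. When $h=0$ the external edges are forced closed (their open weight in \eqref{eqFKdef} is $0^{\omega(e)}$) and the convention $\rho|_{\mathscr{E}((\Lambda_1^a)^C)}\equiv 0$ is in force, so the ghost plays no role; moreover the internal edge weights $(1-e^{-2\beta_c})^{\omega(e)}(e^{-2\beta_c})^{1-\omega(e)}$ do not depend on $a$, $\beta_c$ is the same critical point for both lattices, and the cluster weight $2^{\mathcal K}$ is combinatorial. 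Hence this isomorphism pushes $\mathbb{P}^a_{\Lambda_1,\rho,0}$ onto $\mathbb{P}^1_{\Lambda_{1/a},\xi,0}$ for the image boundary condition $\xi$, fixes the origin, and sends $\Lambda_1^a$ and $\partial_{in}\Lambda_1^a$ to $\mathbb{Z}^2\cap[-1/a,1/a]^2$ and its inner boundary. Setting $n:=\lfloor 1/a\rfloor$ (so $n\ge 1$ and $1/a\asymp n$ uniformly for $a\in(0,1]$, whence $a^{1/8}\asymp n^{-1/8}$), it therefore suffices to prove
\[
\tilde{C}_1' n^{-1/8}\;\le\;\mathbb{P}^1_{\Lambda_n,\xi,0}\big(0\longleftrightarrow\partial_{in}\Lambda_n^1\big)\;\le\;C_1' n^{-1/8}
\]
with constants independent of $n$ and of the boundary condition $\xi$.

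For the uniformity over $\xi$ I would invoke the comparison between boundary conditions for random-cluster measures with $q\ge 1$ (Proposition~4.28 of \cite{Gri06}, already used for Lemma~\ref{lem3}): every boundary condition induces a partition of $\partial_{ex}\Lambda_n^1$ intermediate between the free and the wired ones, and $\{0\longleftrightarrow\partial_{in}\Lambda_n^1\}$ is increasing, so its probability under $\xi$ lies between its probabilities under the free and the wired measures on $\Lambda_n$. It then remains to establish the two-sided $n^{-1/8}$ bound for those two extreme measures, which is exactly what Lemma~5.4 of \cite{DCHN11} provides for the critical FK-Ising model. Combining these facts with the scaling identity yields the asserted inequalities with $\tilde{C}_1,C_1$ depending on neither $a$ nor $\rho$. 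I expect the only non-routine point to be this boundary-condition uniformity --- the scaling step is an exact identity and the replacement of $1/a$ by the integer $n$ costs only a bounded factor since $a\in(0,1]$ --- so the comparison inequality, reducing matters to the free and wired cases treated in \cite{DCHN11}, is the step I would check most carefully.
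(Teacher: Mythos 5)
Your proof is correct and spells out exactly what the paper leaves implicit when it declares the lemma ``an immediate consequence of Lemma 5.4 of \cite{DCHN11}'': the scaling from $a\mathbb{Z}^2$ to the unit lattice (exact at $h=0$ since the internal edge weights depend only on $\beta_c$ and the ghost is inert), together with the sandwich between free and wired boundary conditions via monotonicity in the boundary condition. The only nitpick is bibliographic: monotonicity of the random-cluster measure in the boundary condition is a separate standard fact from the comparison in the edge/field parameter that Proposition~4.28 of \cite{Gri06} was cited for in Lemma~\ref{lem3}, so you may wish to cite the boundary-condition comparison result (e.g.\ the monotonicity-in-boundary-conditions lemma in \cite{Gri06}) rather than reuse that same proposition.
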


Let $Q:=\Lambda_{1/2}$ be the unit square centered at the origin. Let $E^a_{Q,+,0}$ be the expectation with respect to $P^a_{Q,+,0}$. Let $m^a_Q$ be the renormalized magnetization in $Q$ defined by
\[m^a_Q:=a^{15/8}\sum_{x\in Q^a}\sigma_x.\]
Then we have
\begin{lemma}[\cite{CGN15}]\label{lem:mgf}
There exists $C_5\in (0,\infty)$ such that for any $a>0$ and $t>0$
\[E^a_{Q,+,0}(e^{tm^a_Q})\leq e^{C_5(t+t^2)}.\]
\end{lemma}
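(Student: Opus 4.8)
The plan is to control the cumulant generating function $g_a(t):=\log E^a_{Q,+,0}\big(e^{t m^a_Q}\big)$ directly, using the fact that tilting $P^a_{Q,+,0}$ by $e^{t m^a_Q}$ produces exactly the critical Ising measure $P^a_{Q,+,t}$ on $Q$ with plus boundary condition and renormalized external field $t$ (because $t m^a_Q = t\,a^{15/8}\sum_{x\in Q^a}\sigma_x$ is precisely the external-field term of \eqref{eqIsingdef} with $h=t$). Hence $g_a(0)=0$ and, for $t\ge 0$,
\[
g_a'(t)=a^{15/8}\sum_{x\in Q^a}\langle\sigma_x\rangle^+_{Q,h=t}\ge 0,\qquad g_a''(t)=a^{15/4}\sum_{x,y\in Q^a}\langle\sigma_x;\sigma_y\rangle^+_{Q,h=t}=\mathrm{Var}_{P^a_{Q,+,t}}(m^a_Q)\ge 0 .
\]
By the GHS inequality $h\mapsto\langle\sigma_x\rangle^+_{Q,h}$ is concave on $[0,\infty)$, so $g_a'$ is concave and $g_a''$ is therefore nonincreasing; together with $g_a''\ge0$ this yields $0\le g_a''(t)\le g_a''(0)$ for all $t\ge 0$, and integrating twice from $0$ gives $g_a(t)\le g_a'(0)\,t+\tfrac12 g_a''(0)\,t^2$. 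It thus suffices to bound $g_a'(0)$ and $g_a''(0)$ by constants independent of $a$; this I would do for $a\in(0,1]$, the range relevant to the applications here.

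For the first moment, $\langle\sigma_x\rangle^+$ is nonincreasing under enlargement of the domain, so $\langle\sigma_x\rangle^+_{Q,0}\le\langle\sigma_x\rangle^+_{x+\Lambda_{d_x},0}$, where $d_x$ is the $\ell^{\infty}$-distance from $x$ to $\partial Q$ (so that $x+\Lambda_{d_x}\subseteq Q$); by the Edwards--Sokal coupling the right-hand side equals the FK one-arm probability at $h=0$ with wired boundary condition on $x+\Lambda_{d_x}$, which by the rescaled one-arm bound of Lemma~\ref{lem1arm} is at most $C(a/d_x)^{1/8}$. Hence $g_a'(0)=a^{15/8}\sum_{x\in Q^a}\langle\sigma_x\rangle^+_{Q,0}\le C\,a^{2}\sum_{x\in Q^a}d_x^{-1/8}$, and this Riemann sum is bounded uniformly in $a$ because $\int_Q\mathrm{dist}(z,\partial Q)^{-1/8}\,dz<\infty$ (exponent $1/8<1$). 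For the second moment I would first discard the plus boundary condition: since plus boundary conditions are a monotone $h_z\uparrow+\infty$ limit of nonnegative site fields, the GHS inequality in the form $\partial_{h_z}\langle\sigma_x;\sigma_y\rangle\le 0$ gives $\langle\sigma_x;\sigma_y\rangle^+_{Q,0}\le\langle\sigma_x;\sigma_y\rangle^{f}_{\overline{Q^a},0}=\langle\sigma_x\sigma_y\rangle^{f}_{\overline{Q^a},0}\le\langle\sigma_x\sigma_y\rangle_{a,0}$, the full-plane critical two-point function, and $\langle\sigma_x\sigma_y\rangle_{a,0}\le C(a/|x-y|)^{1/4}$ by the known behavior of the critical planar Ising correlation (the $\eta=1/4$ exponent, available uniformly in $a$ from the exact solution or from RSW bounds). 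Therefore $g_a''(0)\le C\,a^{15/4}\sum_{x,y\in Q^a}(a/|x-y|)^{1/4}=C\,a^{4}\sum_{x,y\in Q^a}|x-y|^{-1/4}=O(1)$, again by a Riemann sum estimate (now the relevant exponent $1/4$ lies below $2$). Combining these with $g_a(t)\le g_a'(0)t+\tfrac12 g_a''(0)t^2$ gives $g_a(t)\le C_5(t+t^2)$ with $C_5$ independent of $a$ and $t$, which is the assertion.

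The main obstacle is assembling the uniform-in-$a$ input estimates with exactly the critical exponents $1/8$ and $1/4$. The two power-law sums over $Q^a$ above converge only because $1/8$ lies strictly below the codimension-one threshold $1$ and $1/4$ strictly below the codimension-two threshold $2$; this is precisely where the normalization $a^{15/8}$ of $m^a_Q$ enters, and a merely polynomial (non-sharp) decay rate for the arm and connection probabilities would not close the argument. The one-arm bound is supplied by Lemma~\ref{lem1arm}, while the sharp two-point upper bound is classical. A secondary technical point is the two monotonicity reductions used above — domain monotonicity of $\langle\sigma_x\rangle^+$, and GHS monotonicity of truncated two-point functions in the site fields — both of which follow from Griffiths' and GHS inequalities applied to the finite-field realization of the plus boundary condition.
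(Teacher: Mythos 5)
Your proof is correct, and it essentially reconstructs from scratch what the paper simply cites. The paper's ``proof'' of Lemma~\ref{lem:mgf} is a one-line delegation to the proof of Proposition~3.5 in \cite{CGN15}, and the argument there is indeed of the same type: control the cumulant generating function via the GHS inequality (so that the field-tilted measure is exactly the critical Ising model with external field, the log-Laplace transform has nonincreasing second derivative, and a Taylor bound reduces everything to the first and second moments at $h=0$), with the one-arm exponent $1/8$ and the critical two-point exponent $1/4$ entering at exactly the spots you identify. So this is essentially the same approach, just written out in full where the paper points to a reference.

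Two very minor points you should address when polishing: (i) on the diagonal $x=y$ in the second-moment bound, $(a/|x-y|)^{1/4}$ is meaningless; use instead $\langle\sigma_x;\sigma_x\rangle^+ = 1 - (\langle\sigma_x\rangle^+)^2 \le 1$, so that the diagonal contributes $a^{15/4}|Q^a| = O(a^{7/4}) \to 0$. (ii) For sites $x \in Q^a$ with $d_x < a$ (including $d_x = 0$ when $a$ divides $1/2$), the rescaled Lemma~\ref{lem1arm} does not apply; there one uses the trivial bound $\langle\sigma_x\rangle^+ \le 1$ and notes that the number of such sites is $O(1/a)$, contributing $O(a^{15/8}/a) = O(a^{7/8}) \to 0$ to $g_a'(0)$. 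Also, your parenthetical ``or from RSW bounds'' for the two-point bound is slightly loose: RSW alone gives only a polynomial bound with an unspecified exponent; to get the sharp $(a/|x-y|)^{1/4}$ you need either the exact solution or quasi-multiplicativity combined with the sharp one-arm estimate of Lemma~\ref{lem1arm} --- and as you correctly emphasize, the sharp exponents are essential for the Riemann sums to close. Finally, as you note, the lemma's statement ``for any $a>0$'' should really be read as $a \in (0,1]$ (for $a>1$, $Q^a = \{0\}$ and $m^a_Q = a^{15/8}\sigma_0$, whose MGF is not uniformly sub-Gaussian in $a$), which is the only range used in the paper.
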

\begin{proof}
This follows from the proof of Proposition 3.5 in \cite{CGN15}.
\end{proof}

\section{Couplings of FK and Ising variables}\label{sec:coupling}
\subsection{A coupling for $h>0$}
In the Appendix, we discuss the coupling of FK and Ising variables for general finite graphs with general non-negative magnetic field profiles---see also \cite{CV16}. In this subsection, we focus on the critical FK measure in a finite domain with constant magnetic field, but general boundary conditions.
The following two propositions are generalizations of Lemma 4 and Proposition 1 in \cite{CJN17}.
\begin{proposition}\label{propFKcon}
Suppose $\rho\in\{0,1\}^{\overline{\mathscr{B}}\left(\left(D^a\right)^C\right)\cup\mathscr{E}\left(\left(D^a\right)^C\right)}$ with $\rho|_{\mathscr{E}\left((D^a)^C\right)}\equiv0$. Then the Radon-Nikodym derivative of $\tilde{\mathbb{P}}_{D,\rho,h}^a$ with respect to $\mathbb{P}_{D,\rho,0}^a$ is
\begin{equation}\label{eqRN}
\frac{d\tilde{\mathbb{P}}_{D,\rho,h}^a}{d\mathbb{P}_{D,\rho,0}^a}(\omega)=\frac{\prod_{\mathcal{C}\in \mathscr{C}(D^a,\rho,\omega)}\cosh(ha^{15/8}|\mathcal{C}|)}{\mathbb{E}_{D,\rho,0}^a\left[\prod_{\mathcal{C}\in \mathscr{C}(D^a,\rho,\cdot)}\cosh(ha^{15/8}|\mathcal{C}|)\right]},\, \omega\in\{0,1\}^{\mathscr{B}(D^a)}
\end{equation}
where $\mathbb{E}_{D,\rho,0}^a$ is the expectation with respect to $\mathbb{P}_{D,\rho,0}^a$.
\end{proposition}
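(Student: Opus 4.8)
**Proof proposal for Proposition 7 (the Radon–Nikodym derivative of $\tilde{\mathbb{P}}_{D,\rho,h}^a$ w.r.t. $\mathbb{P}_{D,\rho,0}^a$).**

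The plan is to compute both measures explicitly on $\{0,1\}^{\mathscr{B}(D^a)}$ using the marginal formula \eqref{eq:FKm} and take the ratio. First I would record that, under the standing assumption $\rho|_{\mathscr{E}((D^a)^C)}\equiv 0$, the condition $g\notin\mathcal{C}$ in \eqref{eq:FKm} is vacuous (as noted right after that display), so for $h>0$
\[
\tilde{\mathbb{P}}^a_{D,\rho,h}(\omega)=\frac{(1-e^{-2\beta_c})^{o(\omega)}(e^{-2\beta_c})^{c(\omega)}\,\prod_{\mathcal{C}\in\mathscr{C}(D^a,\rho,\omega)}\bigl(1+e^{-2ha^{15/8}|\mathcal{C}|}\bigr)}{\tilde{Z}^a_{D^a,\rho,h}},
\]
while at $h=0$ every factor $1+e^{-2ha^{15/8}|\mathcal{C}|}$ equals $2$, so
\[
\mathbb{P}^a_{D,\rho,0}(\omega)=\frac{(1-e^{-2\beta_c})^{o(\omega)}(e^{-2\beta_c})^{c(\omega)}\,2^{\,|\mathscr{C}(D^a,\rho,\omega)|}}{\tilde{Z}^a_{D^a,\rho,0}}.
\]
(Here $\mathbb{P}^a_{D,\rho,0}$ and its edge-marginal $\tilde{\mathbb{P}}^a_{D,\rho,0}$ coincide because there are no external edges to integrate out.)

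Next I would take the quotient of the two displays. The common prefactor $(1-e^{-2\beta_c})^{o(\omega)}(e^{-2\beta_c})^{c(\omega)}$ cancels, and the cluster product becomes
\[
\frac{\prod_{\mathcal{C}}\bigl(1+e^{-2ha^{15/8}|\mathcal{C}|}\bigr)}{2^{\,|\mathscr{C}(D^a,\rho,\omega)|}}
=\prod_{\mathcal{C}\in\mathscr{C}(D^a,\rho,\omega)}\frac{1+e^{-2ha^{15/8}|\mathcal{C}|}}{2}
=\prod_{\mathcal{C}\in\mathscr{C}(D^a,\rho,\omega)}e^{-ha^{15/8}|\mathcal{C}|}\cosh\!\bigl(ha^{15/8}|\mathcal{C}|\bigr),
\]
using $1+e^{-2x}=2e^{-x}\cosh x$. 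The stray factor $\prod_{\mathcal{C}}e^{-ha^{15/8}|\mathcal{C}|}=e^{-ha^{15/8}\sum_{\mathcal{C}}|\mathcal{C}|}=e^{-ha^{15/8}|D^a|}$ is a constant independent of $\omega$ (each vertex of $D^a$ lies in exactly one cluster in $\mathscr{C}(D^a,\rho,\omega)$), so it can be absorbed into the normalization. Thus
\[
\frac{d\tilde{\mathbb{P}}^a_{D,\rho,h}}{d\mathbb{P}^a_{D,\rho,0}}(\omega)=\frac{\tilde{Z}^a_{D^a,\rho,0}}{\tilde{Z}^a_{D^a,\rho,h}}\,e^{-ha^{15/8}|D^a|}\prod_{\mathcal{C}\in\mathscr{C}(D^a,\rho,\omega)}\cosh\!\bigl(ha^{15/8}|\mathcal{C}|\bigr)
=\frac{\prod_{\mathcal{C}}\cosh(ha^{15/8}|\mathcal{C}|)}{Z}
\]
for some $\omega$-independent constant $Z$. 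Finally, since $d\tilde{\mathbb{P}}^a_{D,\rho,h}/d\mathbb{P}^a_{D,\rho,0}$ must integrate to $1$ against $\mathbb{P}^a_{D,\rho,0}$, the constant is forced to be $Z=\mathbb{E}^a_{D,\rho,0}\bigl[\prod_{\mathcal{C}\in\mathscr{C}(D^a,\rho,\cdot)}\cosh(ha^{15/8}|\mathcal{C}|)\bigr]$, which is exactly \eqref{eqRN}. (The $h=0$ case is trivial: both sides equal $1$.)

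There is no real obstacle here — this is a bookkeeping computation. The only points requiring a little care are: (i) checking that dropping $g\notin\mathcal{C}$ is legitimate, which is guaranteed by the hypothesis $\rho|_{\mathscr{E}((D^a)^C)}\equiv 0$ (for $h=0$ this is the standing convention, and for $h>0$ it is assumed); (ii) the elementary identity $1+e^{-2x}=2e^{-x}\cosh x$ and the observation that $\sum_{\mathcal{C}\in\mathscr{C}(D^a,\rho,\omega)}|\mathcal{C}|=|D^a|$ is an $\omega$-independent partition of the vertex set, so the factor $e^{-ha^{15/8}|D^a|}$ drops out; and (iii) recognizing that the overall constant is fixed by normalization rather than computing the ratio of partition functions directly. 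I would also remark that this argument is the finite-domain, constant-field specialization of the more general computation in the Appendix, and reduces to Lemma 4 of \cite{CJN17} when $D=\Lambda_L$ and the boundary condition is free or wired.
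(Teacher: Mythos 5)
Your proof is correct and follows essentially the same route as the paper's: take the ratio of the two marginal formulas from \eqref{eq:FKm}, apply the identity $1+e^{-2x}=2e^{-x}\cosh x$, observe that $\sum_{\mathcal{C}}|\mathcal{C}|=|D^a|$ makes the exponential factor $\omega$-independent, and fix the constant by normalization. The only differences are cosmetic (notation for the $h=0$ partition function, and making explicit the normalization step that the paper leaves implicit).
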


\begin{remark}
To be more precise, $\mathbb{P}_{D,\rho,0}^a$ and $\mathbb{E}_{D,\rho,0}^a$ in \eqref{eqRN}  should be $\tilde{\mathbb{P}}_{D,\rho,0}^a$ and $\tilde{\mathbb{E}}_{D,\rho,0}^a$ respectively. But for ease of notation, we drop the $\sim$ notation when $h=0$. This should cause no confusion.
\end{remark}
\begin{proof}
By \eqref{eqFKdef} and \eqref{eq:FKm}, we have
\begin{eqnarray*}
\frac{\tilde{\mathbb{P}}^a_{D,\rho,h}(\omega)}{\mathbb{P}^a_{D,\rho,0}(\omega)}&=&\frac{\hat{Z}^a_{D,\rho,0}\prod_{\mathcal{C}\in\mathscr{C}(D^a,\rho,\omega)}(1+e^{-2ha^{15/8}|\mathcal{C}|})}{\tilde{Z}^a_{D,\rho,h}\prod_{\mathcal{C}\in\mathscr{C}(D^a,\rho,\omega)}2}\\
&=&\frac{\hat{Z}^a_{D,\rho,0}}{\tilde{Z}^a_{D,\rho,h}}\prod_{\mathcal{C}\in\mathscr{C}(D^a,\rho,\omega)}\frac{1+e^{-2ha^{15/8}|\mathcal{C}|}}{2}\\
&=&\frac{e^{-ha^{15/8}|D^a|}\hat{Z}^a_{D,\rho,0}}{\tilde{Z}^a_{D,\rho,h}}\prod_{\mathcal{C}\in\mathscr{C}(D^a,\rho,\omega)}\cosh(ha^{15/8}|\mathcal{C}|),
\end{eqnarray*}
where $|D^a|$ is the total number of vertices in $D^a$. Since $\frac{e^{-ha^{15/8}|D^a|}\hat{Z}^a_{D,\rho,0}}{\tilde{Z}^a_{D,\rho,h}}$ only depends on $a,D,\rho,h$ but not on $\omega$, the proposition follows.
\end{proof}

Let $\hat{\mathbb{P}}^a_{D,\rho,h}$ be the Edwards-Sokal coupling of $\mathbb{P}^a_{D,\rho,h}$ and its corresponding Ising measure. For any $\mathcal{C}\in \mathscr{C}(D^a,\rho,\omega)$, let $\sigma(\mathcal{C})$ be the spin value of the cluster assigned by the coupling. Then we have
\begin{proposition}\label{prop:tanh}
Let $\rho\in\{0,1\}^{\overline{\mathscr{B}}\left(\left(D^a\right)^C\right)\cup\mathscr{E}\left(\left(D^a\right)^C\right)}$. For  $\omega\in\{0,1\}^{\mathscr{B}(D^a)}$, suppose $\mathscr{C}(D^a,\rho,\omega)=\{\mathcal{C}_1,\mathcal{C}_2,\ldots\}$ where the $\mathcal{C}_i$'s are distinct. Then for any $\mathcal{C}_i\in \mathscr{C}(D^a,\rho,\omega)$ with $g\notin \mathcal{C}_i$
\begin{align}
&\mathbb{P}_{D,\rho,h}^a(\mathcal{C}_i\longleftrightarrow g|\omega)=\tanh(ha^{15/8}|\mathcal{C}_i|),\label{eqtanh}\\
&\hat{\mathbb{P}}_{D,\rho,h}^a(\sigma(\mathcal{C}_i)=+1|\omega)=\tanh(ha^{15/8}|\mathcal{C}_i|)+\frac{1}{2}\left(1-\tanh(ha^{15/8}|\mathcal{C}_i|)\right),\\
&\hat{\mathbb{P}}_{D,\rho,h}^a(\sigma(\mathcal{C}_i)=-1|\omega)=\frac{1}{2}\left(1-\tanh(ha^{15/8}|\mathcal{C}_i|)\right).
\end{align}
Moreover, conditioned on $\omega$, the events $\{\mathcal{C}_i\longleftrightarrow g\}$ are mutually independent and the events $\{\sigma(\mathcal{C}_i)=+1\}$ are mutually independent.
\end{proposition}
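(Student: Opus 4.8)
\emph{Proof proposal.} The plan is to work directly from the explicit FK weight \eqref{eqFKdef}, fixing the internal configuration $\omega\in\{0,1\}^{\mathscr{B}(D^a)}$ and analysing the conditional law of the external edges $\mathscr{E}(D^a)$ under $\mathbb{P}^a_{D,\rho,h}$. The key observation is that the cluster-weight exponent $\mathcal{K}$ interacts with the external edges in a very simple way: the only vertex reachable through an external edge is the ghost $g$, so two distinct internal clusters can merge only by both connecting to $g$, and an internal cluster $\mathcal{C}_i$ with $g\notin\mathcal{C}_i$ is counted in $\mathcal{K}$ precisely when none of its external edges is open. Writing $k_i$ for the number of open edges among the $|\mathcal{C}_i|$ external edges incident to $\mathcal{C}_i\cap D^a$, one gets $2^{\mathcal{K}}=c(\omega)\,\prod_i 2^{\mathbf{1}[k_i=0]}$ with $c(\omega)$ independent of the external configuration, while the external-edge factor in \eqref{eqFKdef} is already a product over individual external edges, hence over the clusters. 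Therefore the conditional law of the external configuration given $\omega$ factorizes over the clusters $\{\mathcal{C}_i\}$; in particular the events $\{\mathcal{C}_i\longleftrightarrow g\}$ are mutually independent conditionally on $\omega$.

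Given this factorization, \eqref{eqtanh} reduces to a one-cluster computation. For a fixed cluster $\mathcal{C}_i$ with $n:=|\mathcal{C}_i|$, the total conditional weight of its external configurations is, by the binomial theorem,
\begin{equation*}
2\,(e^{-2ha^{15/8}})^{n}+\sum_{k\geq 1}\binom{n}{k}(1-e^{-2ha^{15/8}})^{k}(e^{-2ha^{15/8}})^{n-k}=1+e^{-2ha^{15/8}n},
\end{equation*}
of which the event $\{\mathcal{C}_i\longleftrightarrow g\}=\{k_i\geq 1\}$ carries weight $1-e^{-2ha^{15/8}n}$; dividing and using $(1-e^{-2x})/(1+e^{-2x})=\tanh x$ with $x=ha^{15/8}n$ gives \eqref{eqtanh}.

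For the Edwards--Sokal part, recall that under $\hat{\mathbb{P}}^a_{D,\rho,h}$ the spins are, conditionally on the \emph{full} FK configuration, constant on clusters, equal to $+1$ on the cluster containing $g$, and an independent fair $\pm1$ on every other cluster. Hence, conditionally on $\omega$ alone, $\sigma(\mathcal{C}_i)=+1$ deterministically on $\{\mathcal{C}_i\longleftrightarrow g\}$ and is a fair coin on its complement; combining with \eqref{eqtanh} yields the two displayed conditional probabilities for $\sigma(\mathcal{C}_i)=\pm1$. Finally, each $\sigma(\mathcal{C}_i)$ is a deterministic function of the pair $(\mathbf{1}[\mathcal{C}_i\longleftrightarrow g],\text{its own fair coin})$, and these pairs are mutually independent across $i$ (the connection events by the factorization above, the coins by construction of the coupling), so the events $\{\sigma(\mathcal{C}_i)=+1\}$ are mutually independent.

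The step requiring the most care is the bookkeeping for $\mathcal{K}$ under a general boundary condition $\rho$: one must check that clusters lying entirely outside $D^a$, and any cluster already joined to $g$ through $\rho|_{\mathscr{E}((D^a)^C)}$, contribute only factors independent of the external configuration (the former because they are never counted in $\mathcal{K}$, the latter because it is absorbed into the ghost cluster in any case), and that for a cluster with $g\notin\mathcal{C}_i$ the external edges capable of joining it to $g$ are exactly the $|\mathcal{C}_i|$ edges of $\mathscr{E}(D^a)$ incident to $\mathcal{C}_i\cap D^a$. Once this is in place the argument is routine; it is the $h>0$, finite-domain, general-boundary-condition version of Proposition~1 of \cite{CJN17}, to which the Appendix treatment with a varying field $h_x$ also specializes.
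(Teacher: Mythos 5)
Your proof is correct and takes essentially the same route as the paper's. The paper starts from the already-marginalized formula \eqref{eq:FKm}, rewrites each per-cluster factor $1+e^{-2ha^{15/8}|\mathcal{C}|}$ as the sum $(1-e^{-2ha^{15/8}|\mathcal{C}|})+2e^{-2ha^{15/8}|\mathcal{C}|}$, and identifies the first summand as the weight of $\{\mathcal{C}\longleftrightarrow g\}$; your binomial-theorem computation of the external-edge weights starting from the unmarginalized \eqref{eqFKdef} rederives exactly that split, so the two arguments coincide once one unpacks where \eqref{eq:FKm} comes from.
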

\begin{proof}
For each $\omega\in\{0,1\}^{\mathscr{B}(D^a)}$, one sees from \eqref{eq:FKm} (note that $\mathbb{P}_{D,f,h}^a(\omega)$ and $\tilde{\mathbb{P}}_{D,f,h}^a(\omega)$ are equal) that
\begin{align}\label{eqFKweight}
\mathbb{P}_{D,f,h}^a(\omega)&\propto\left(1-e^{-2\beta_c}\right)^{o(\omega)}\left(e^{-2\beta_c}\right)^{c(\omega)}\nonumber\\&\quad \times \prod_{\mathcal{C}\in \mathscr{C}(D^a,\rho,\omega), g\notin \mathcal{C}}\left((1-e^{-2ha^{15/8}|\mathcal{C}|})+2e^{-2ha^{15/8}|\mathcal{C}|}\right).
\end{align}
So for any $\mathcal{C}_i, \mathcal{C}_j\in \mathscr{C}(D^a,\rho,\omega)$ with $g\notin \mathcal{C}_i$, $g\notin \mathcal{C}_j$ and $i\neq j$,
\[\mathbb{P}_{D,f,h}^a(\mathcal{C}_i\longleftrightarrow g|\omega)=\frac{1-e^{-2ha^{15/8}|\mathcal{C}_i|}}{(1-e^{-2ha^{15/8}|\mathcal{C}_i|})+2e^{-2ha^{15/8}|\mathcal{C}_i|}}=\tanh(ha^{15/8}|\mathcal{C}_i|),\]
\[\mathbb{P}_{D,f,h}^a(\mathcal{C}_i\longleftrightarrow g, \mathcal{C}_j\longleftrightarrow g|\omega)=\tanh(ha^{15/8}|\mathcal{C}_i|)\tanh(ha^{15/8}|\mathcal{C}_j|),\]
with a similar product expression for the intersection of three or more of the events $\{\mathcal{C}_i\longleftrightarrow g\}$. So conditioned on $\omega$, these events are mutually independent. The rest of the proposition follows directly from the Edwards-Sokal coupling.
\end{proof}

\subsection{FK measure without external field}
Let $m^a_D$ be the renormalized magnetization in $D$, i.e.,
\[m^a_D:=a^{15/8}\sum_{x\in D^a}\sigma_x.\]
By the Edwards-Sokal coupling, for each FK measure $\mathbb{P}^a_{D,\rho,0}$, there is a corresponding Ising measure which is denoted by $P^a_{D,\rho,0}$. Let $\mathbb{E}^a_{D,\rho,0}$ (respectively, $E^a_{D,\rho,0}$) be the expectation with respect to $\mathbb{P}^a_{D,\rho,0}$ (respectively, $P^a_{D,\rho,0}$). Recall that when $h=0$ we always assume $\rho\in\{0,1\}^{\overline{\mathscr{B}}\left(\left(D^a\right)^C\right)\cup\mathscr{E}\left(\left(D^a\right)^C\right)}$ with $\rho|_{\mathscr{E}\left((D^a)^C\right)}\equiv0$. Then we have
\begin{proposition}\label{prop:mgf}
For any $a>0$ and $h>0$, we have
\begin{equation}\label{eq:mgf}
1\leq \mathbb{E}_{D,\rho,0}^a\left[\prod_{\mathcal{C}\in \mathscr{C}(D^a,\rho,\cdot)}\cosh(ha^{15/8}|\mathcal{C}|)\right]=E^a_{D,\rho,0}\left(e^{hm^a_D}\right)\leq E^a_{D,+,0}\left(e^{hm^a_D}\right).
\end{equation}
\end{proposition}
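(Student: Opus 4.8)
The statement in \eqref{eq:mgf} has three parts, and I would treat them in turn. The left-hand inequality is immediate: $\cosh\ge 1$ on $\mathbb{R}$, so every factor of $\prod_{\mathcal{C}\in\mathscr{C}(D^a,\rho,\cdot)}\cosh(ha^{15/8}|\mathcal{C}|)$ is at least $1$, and hence so is its expectation.

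For the middle equality I would use the Edwards--Sokal coupling $\hat{\mathbb{P}}^a_{D,\rho,0}$ of $\mathbb{P}^a_{D,\rho,0}$ with its Ising measure $P^a_{D,\rho,0}$ and condition on the FK configuration $\omega$. Since $h=0$ there is no ghost edge, so given $\omega$ the cluster spins $\{\sigma(\mathcal{C}):\mathcal{C}\in\mathscr{C}(D^a,\rho,\omega)\}$ are independent and each uniform on $\{-1,+1\}$, and every vertex of $D^a$ carries the spin of its cluster. Recalling that $|\mathcal{C}|$ counts the vertices of $\mathcal{C}\cap D^a$, this gives
\[
m^a_D=a^{15/8}\sum_{x\in D^a}\sigma_x=a^{15/8}\sum_{\mathcal{C}\in\mathscr{C}(D^a,\rho,\omega)}\sigma(\mathcal{C})\,|\mathcal{C}|,
\]
so that, by independence across clusters together with $E(e^{t\sigma(\mathcal{C})})=\cosh t$ for a fair coin,
\[
E^a_{D,\rho,0}\!\left(e^{hm^a_D}\,\big|\,\omega\right)=\prod_{\mathcal{C}\in\mathscr{C}(D^a,\rho,\omega)}\cosh\!\big(ha^{15/8}|\mathcal{C}|\big).
\]
Averaging this identity over $\omega$ under $\mathbb{P}^a_{D,\rho,0}$ yields the equality.

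For the right-hand inequality, observe that since $h>0$ the observable $\sigma\mapsto e^{hm^a_D}=\exp\!\big(ha^{15/8}\sum_{x\in D^a}\sigma_x\big)$ is increasing in the spin configuration, so it is enough to show that $P^a_{D,+,0}$ stochastically dominates $P^a_{D,\rho,0}$. This is a monotonicity-in-boundary-conditions statement for the critical ferromagnetic Ising model: the Edwards--Sokal image $P^a_{D,\rho,0}$ is an Ising Gibbs measure whose action on $D^a$ is, after conditioning on the spins it induces on $\partial_{ex}D^a$, that of the Ising model on $D^a$ with a $\pm1$ boundary condition (possibly subject to wiring constraints coming from $\rho$); every such boundary condition is dominated by the all-$+$ one, so the conditioned measure is dominated by $P^a_{D,+,0}$ via the FKG/Griffiths comparison inequality, and integrating out the boundary spins gives $P^a_{D,\rho,0}\preceq P^a_{D,+,0}$. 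Applying this to the increasing observable $e^{hm^a_D}$ finishes the proof.

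I expect the only genuinely delicate point to be this last step: verifying, for an arbitrary FK boundary condition $\rho$ rather than just $\rho=f$ or $\rho=w$, that conditioning $P^a_{D,\rho,0}$ on its boundary trace really returns a standard Ising specification dominated by all-$+$ boundary conditions. For the boundary conditions used elsewhere in the paper there is no issue — for $\rho=f$ it is the textbook Ising comparison inequality, and for $\rho=w$ one has $P^a_{D,w,0}=\tfrac12\big(P^a_{D,+,0}+P^a_{D,-,0}\big)\preceq P^a_{D,+,0}$ by spin-flip symmetry — so in practice no real obstruction arises.
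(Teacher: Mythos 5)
Your proposal is correct and follows essentially the same route as the paper: the left inequality from $\cosh\ge 1$, the middle equality from the Edwards--Sokal coupling (conditioning on $\omega$ and using that the cluster spins are i.i.d.\ fair signs when $h=0$), and the right inequality from FKG/Griffiths monotonicity in boundary conditions. The paper's proof is simply terser (it cites (3.2) of \cite{CGN16} for the identity and says the last inequality ``follows from the FKG inequality''), and you have merely filled in the details, including the minor point about general FK boundary conditions $\rho$ that the paper leaves implicit.
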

\begin{proof}
The leftmost inequality in \eqref{eq:mgf} is trivial since $\cosh(r)\geq 1$ for any $r\in\mathbb{R}$. By the Edwards-Sokal coupling (see, e.g., (3.2) in \cite{CGN16})
\begin{align*}
E^a_{D,\rho,0}\left(e^{hm^a_D}\right)&=\mathbb{E}^a_{D,\rho,0}\left[\prod_{\mathcal{C}\in \mathscr{C}(D^a,\rho,\cdot)}\left(\frac{1}{2}e^{ha^{15/8}|\mathcal{C}|}+\frac{1}{2}e^{-ha^{15/8}|\mathcal{C}|}\right)\right]\\
&=\mathbb{E}_{D,\rho,0}^a\left[\prod_{\mathcal{C}\in \mathscr{C}(D^a,\rho,\cdot)}\cosh(ha^{15/8}|\mathcal{C}|).\right]
\end{align*}
The last inequality in \eqref{eq:mgf} follows from the FKG inequality.
\end{proof}

If $D$ is a simply-connected domain and $\rho$ is either free or wired, then Theorem 2.6 of \cite{CGN15} says $m^a_D$ converges weakly to a continuum magnetization variable $m_D$ (Theorem 2.6 is for a dyadic square but the same proof applies to a general simply-connected domain). Then by Corollary 3.8 of \cite{CGN15}, we have
\begin{equation}\label{eq:mgflimit}
\lim_{a\downarrow 0}E^a_{D,\rho,0}\left(e^{hm^a_D}\right)=E_{D,\rho,0}\left(e^{hm_D}\right),
\end{equation}
which yields the following proposition.
\begin{proposition}
If $D$ is a simply-connected domain and $\rho$ is either free or wired, then
\[\lim_{a\downarrow 0}\mathbb{E}_{D,\rho,0}^a\left[\prod_{\mathcal{C}\in \mathscr{C}(D^a,\rho,\cdot)}\cosh(ha^{15/8}|\mathcal{C}|)\right]=E_{D,\rho,0}\left(e^{hm_D}\right)\leq E_{D,+,0}\left(e^{hm_D}\right).\]
\end{proposition}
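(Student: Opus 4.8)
The plan is to read this off from Proposition~\ref{prop:mgf} combined with the convergence of exponential moments \eqref{eq:mgflimit}. First I would note that the middle identity in \eqref{eq:mgf}, namely $\mathbb{E}_{D,\rho,0}^a\left[\prod_{\mathcal{C}\in \mathscr{C}(D^a,\rho,\cdot)}\cosh(ha^{15/8}|\mathcal{C}|)\right]=E^a_{D,\rho,0}\left(e^{hm^a_D}\right)$, holds for every fixed $a>0$. Since $D$ is simply-connected and $\rho$ is free or wired, $m^a_D$ converges weakly to the continuum magnetization $m_D$ (Theorem~2.6 of \cite{CGN15}), so \eqref{eq:mgflimit} applies and gives $\lim_{a\downarrow0}E^a_{D,\rho,0}\left(e^{hm^a_D}\right)=E_{D,\rho,0}\left(e^{hm_D}\right)$. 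In particular the limit on the left-hand side of the proposition exists and equals $E_{D,\rho,0}\left(e^{hm_D}\right)$, which is the asserted equality.

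For the inequality I would take $a\downarrow0$ in the last bound of \eqref{eq:mgf}, $E^a_{D,\rho,0}\left(e^{hm^a_D}\right)\leq E^a_{D,+,0}\left(e^{hm^a_D}\right)$, valid for every $a$ by the FKG inequality. The left side converges as just explained. For the right side one needs the corresponding statement for plus boundary conditions --- weak convergence of $m^a_D$ under $P^a_{D,+,0}$ to a limit $m_D$ together with convergence of exponential moments, $E^a_{D,+,0}\left(e^{hm^a_D}\right)\to E_{D,+,0}\left(e^{hm_D}\right)$ --- which follows by the same arguments, the uniform moment control being furnished by Lemma~\ref{lem:mgf}. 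Letting $a\downarrow0$ on both sides of the inequality then yields $E_{D,\rho,0}\left(e^{hm_D}\right)\leq E_{D,+,0}\left(e^{hm_D}\right)$.

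I do not expect a genuine obstacle here; the only point requiring attention is that weak convergence of $m^a_D$ does not by itself give convergence of $E\left(e^{hm^a_D}\right)$. This gap is closed by the uniform-in-$a$ bound $E^a_{D,\rho,0}\left(e^{tm^a_D}\right)\leq e^{C_5(t+t^2)}$ of Lemma~\ref{lem:mgf}, which makes the family $\{e^{hm^a_D}\}_{a>0}$ uniformly integrable and hence upgrades weak convergence to convergence of these exponential moments --- precisely the content of Corollary~3.8 of \cite{CGN15}. All the substantive work therefore lies in Proposition~\ref{prop:mgf} and in the cited scaling-limit results; the proposition itself is an immediate consequence.
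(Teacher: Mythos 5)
Your proof is correct and follows essentially the same approach as the paper: both obtain the equality from the middle identity of \eqref{eq:mgf} combined with the moment-generating-function convergence \eqref{eq:mgflimit}, and both obtain the inequality from the FKG inequality (in your case by passing the lattice inequality $E^a_{D,\rho,0}(e^{hm^a_D})\leq E^a_{D,+,0}(e^{hm^a_D})$ to the limit). Your explicit discussion of the uniform-integrability step and of convergence for the plus boundary condition simply fills in detail that the paper's two-line proof leaves implicit.
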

\begin{proof}
The equality follows from \eqref{eq:mgf} and \eqref{eq:mgflimit} while the inequality follows from the FKG inequality.
\end{proof}

Recall that $Q$ is the unit square centered at the origin. For a configuration $\omega$ sampled from the measure $\mathbb{P}^a_{Q,w,0}$, let $\mathcal{C}_0(\omega)$ be the boundary cluster (note that there is only one such cluster). For a configuration $\omega$ sampled from $\mathbb{P}^a_{Q,\rho,0}$, let $\mathcal{A}_{max}(\omega)$ denote the maximum number of vertices of any FK-open cluster. Let $A_0(\omega):=a^{15/8}|\mathcal{C}_0(\omega)|$ and $A_{max}(\omega):=a^{15/8}\mathcal{A}_{max}(\omega)$ be the corresponding renormalized ``areas''. Then we have
\begin{proposition}\label{prop:mgfbd}
For any $a>0$ and $t>0$, we have
\[\mathbb{E}^a_{Q,w,0}\left(e^{tA_0}\right)\leq 2 e^{C_5(t+t^2)},~\mathbb{E}^a_{Q,\rho,0}\left(e^{tA_{max}}\right)\leq 2 e^{C_5(t+t^2)} \text{ for any }\rho,\]
where $C_5$ is as in Lemma \ref{lem:mgf}.
\end{proposition}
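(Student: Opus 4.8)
The plan is to reduce both moment generating function bounds to Lemma \ref{lem:mgf} via the coupling machinery already developed. The key observation is that $A_0$ under $\mathbb{P}^a_{Q,w,0}$ and $A_{max}$ under $\mathbb{P}^a_{Q,\rho,0}$ are each dominated, in a usable sense, by the renormalized magnetization $m^a_Q$ of an associated Ising model with $+$ boundary conditions, for which the bound $E^a_{Q,+,0}(e^{tm^a_Q})\leq e^{C_5(t+t^2)}$ is available. For the first inequality, I would work in the Edwards-Sokal coupling $\hat{\mathbb{P}}^a_{Q,w,0}$ of $\mathbb{P}^a_{Q,w,0}$ with the Ising measure $P^a_{Q,w,0}$. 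Conditioned on the FK configuration $\omega$, the boundary cluster $\mathcal{C}_0(\omega)$ receives spin $+1$ deterministically (wired boundary), while every other cluster gets an independent fair coin. Hence, conditionally on $\omega$, the renormalized magnetization satisfies $m^a_Q = A_0(\omega) + a^{15/8}\sum_{\mathcal{C}\neq\mathcal{C}_0} \sigma(\mathcal{C})|\mathcal{C}|$ with the second sum a sum of independent mean-zero bounded terms. Taking conditional expectation over the fair coins and using $\cosh(r)\geq 1$,
\begin{equation}
E^a_{Q,w,0}\!\left(e^{tm^a_Q}\,\middle|\,\omega\right)=e^{tA_0(\omega)}\prod_{\mathcal{C}\neq\mathcal{C}_0}\cosh\!\left(ta^{15/8}|\mathcal{C}|\right)\geq e^{tA_0(\omega)},
\end{equation}
so that $\mathbb{E}^a_{Q,w,0}(e^{tA_0})\leq E^a_{Q,w,0}(e^{tm^a_Q})$. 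Finally, by the FKG/GKS comparison (as in Proposition \ref{prop:mgf}, last inequality of \eqref{eq:mgf}), $E^a_{Q,w,0}(e^{tm^a_Q})\leq E^a_{Q,+,0}(e^{tm^a_Q})\leq e^{C_5(t+t^2)}$; strictly speaking one should note the wired FK measure corresponds to the $+$ (or free, depending on the cluster-weight convention) Ising boundary condition, and in either case the $+$ boundary dominates the magnetization MGF. The factor $2$ is not even needed here, but is harmless.

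For the second inequality, I would handle the $A_{max}$ case by a union-type bound over clusters. Fix any boundary condition $\rho$ and the Edwards-Sokal coupling $\hat{\mathbb{P}}^a_{Q,\rho,0}$. Conditioned on $\omega$ with clusters $\mathcal{C}_1,\mathcal{C}_2,\ldots$, each non-boundary cluster gets an independent fair spin, so for a \emph{fixed} index $j$ one gets, exactly as above,
\begin{equation}
E^a_{Q,\rho,0}\!\left(e^{t|m^a_Q|}\,\middle|\,\omega\right)\geq E^a_{Q,\rho,0}\!\left(e^{t\,\sigma(\mathcal{C}_j)\,a^{15/8}|\mathcal{C}_j|}\,\middle|\,\omega\right)\cdot\!\!\prod_{i\neq j}\!\cosh\!\left(ta^{15/8}|\mathcal{C}_i|\right)\cdot(\text{boundary factor})\geq \cosh\!\left(tA_j(\omega)\right),
\end{equation}
where $A_j(\omega):=a^{15/8}|\mathcal{C}_j|$ and I have used $\cosh\geq 1$ and that the conditional distribution of $m^a_Q$ given $\omega$ is symmetric under flipping any single free cluster's spin, so that its MGF at $t$ dominates the single-cluster symmetric two-point MGF. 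Since $e^{tA_{max}}\leq \sum_i e^{tA_i}\leq 2\sum_i \cosh(tA_i)$ is false in general (the number of clusters is not bounded), the cleaner route is: $e^{tA_{max}(\omega)} = \max_i e^{tA_i(\omega)} \leq \max_i\,2\cosh(tA_i(\omega)) \leq 2\,E^a_{Q,\rho,0}(e^{t|m^a_Q|}\mid\omega)$, because the conditional MGF of $|m^a_Q|$ given $\omega$ is at least $\cosh(tA_i(\omega))$ for \emph{every} $i$ simultaneously (each bound used a different "target" cluster but all are valid against the same conditional law). Taking expectation over $\omega$ and then using $E^a_{Q,\rho,0}(e^{t|m^a_Q|})\leq 2E^a_{Q,\rho,0}(e^{tm^a_Q})\leq 2E^a_{Q,+,0}(e^{tm^a_Q})$ would give an extra factor; to land exactly at the stated $2e^{C_5(t+t^2)}$ I would instead observe that by spin-flip symmetry of the free/wired-with-coins conditional law one may drop the absolute value after conditioning and absorb the symmetrization into the single factor of $2$, then apply the FKG comparison and Lemma \ref{lem:mgf}.

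The main obstacle is the bookkeeping in the $A_{max}$ estimate: one must argue that a \emph{single} conditional magnetization MGF simultaneously dominates $\cosh(tA_i)$ for the largest cluster without knowing in advance which cluster is largest, and must control the boundary cluster (which does not get a free coin) — either by noting $\rho$ can only help via FKG monotonicity of $e^{tm^a_Q}$, or by treating the boundary cluster separately and checking its renormalized area is itself bounded by $m^a_Q$ under $+$ boundary conditions. The relation between the wired FK boundary condition and the Ising boundary condition producing the dominating $E^a_{Q,+,0}$ MGF should be stated carefully but is standard. Everything else is a direct application of the Edwards-Sokal coupling, $\cosh\geq 1$, FKG, and Lemma \ref{lem:mgf}.
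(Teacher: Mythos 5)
Your overall strategy—reduce both bounds to $E^a_{Q,+,0}(e^{tm^a_Q})$ via the Edwards--Sokal identity relating $\prod_{\mathcal{C}}\cosh(ta^{15/8}|\mathcal{C}|)$ to the Ising moment generating function, then invoke Lemma~\ref{lem:mgf}—is the same as the paper's. However, there is a genuine error and two unnecessary detours that should be fixed.

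The error concerns the boundary cluster. Recall that the boundary condition denoted $w$ in this paper has the internal edges outside $Q^a$ wired but the ghost edges \emph{free} ($\rho|_{\mathscr{E}((Q^a)^C)}\equiv 0$), and for $h=0$ the internal ghost edges are also a.s.\ closed. Consequently, in the Edwards--Sokal coupling of $\mathbb{P}^a_{Q,w,0}$, the boundary cluster $\mathcal{C}_0$ receives an \emph{independent fair coin} exactly like every other cluster; it is not forced to have spin $+1$. Your conditional identity should therefore read
\[
E^a_{Q,w,0}\bigl(e^{tm^a_Q}\,\big|\,\omega\bigr)=\cosh\bigl(tA_0(\omega)\bigr)\prod_{\mathcal{C}\neq\mathcal{C}_0}\cosh\bigl(ta^{15/8}|\mathcal{C}|\bigr),
\]
and since $\cosh(tA_0)\geq\tfrac12 e^{tA_0}$ the bound goes through, but only with the factor $2$—contrary to your claim that ``the factor 2 is not even needed here.''

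Two simplifications. First, once you have the second inequality for arbitrary $\rho$, the first follows immediately since $A_0\leq A_{max}$ and one may take $\rho=w$; this is how the paper proceeds, and it eliminates your separate $A_0$ argument entirely. Second, for the $A_{max}$ bound the detour through $|m^a_Q|$ costs you an extra factor of $2$ (as you notice), and the symmetrization argument you sketch to repair it is not needed: one can stay entirely on the FK side and use $e^{tA_{max}}\leq 2\cosh(tA_{max})\leq 2\prod_{\mathcal{C}}\cosh(ta^{15/8}|\mathcal{C}|)$ (pointwise, using $\cosh\geq 1$), then apply the exact identity $\mathbb{E}^a_{Q,\rho,0}\bigl[\prod_{\mathcal{C}}\cosh(ta^{15/8}|\mathcal{C}|)\bigr]=E^a_{Q,\rho,0}(e^{tm^a_Q})$ from Proposition~\ref{prop:mgf}, the FKG comparison to $+$ boundary conditions, and Lemma~\ref{lem:mgf}. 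This lands directly at $2e^{C_5(t+t^2)}$ with no need to diagnose which cluster is largest or to track conditional magnetization laws.
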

\begin{proof}
We only prove the second inequality since the first follows from the second. By Proposition \ref{prop:mgf} and Lemma \ref{lem:mgf}, we have
\begin{align}
\mathbb{E}^a_{Q,\rho,0}\left(e^{tA_{max}}\right)&\leq 2\mathbb{E}_{Q,\rho,0}^a\left[\prod_{\mathcal{C}\in \mathscr{C}(Q^a,\rho,\cdot)}\cosh(ta^{15/8}|\mathcal{C}|)\right]\nonumber\\
&\leq 2E^a_{Q,+,0}\left(e^{tm^a_D}\right)\leq 2 e^{C_5(t+t^2)},
\end{align}
where the first inequality follows from $e^r\leq 2\cosh(r)$ and $\cosh(r)\geq 1$ for any $r\in\mathbb{R}$.
\end{proof}

\subsection{FK measure with external field}
In this subsection we present three propositions concerning the moment generating function of cluster size and one-arm events. They will be used in Section \ref{sec:correlation} below.

For a configuration $\omega$ from the measure $\tilde{\mathbb{P}}^a_{Q,w,h}$, we again let $\mathcal{C}_0(\omega)$ be the boundary cluster and $A_0(\omega):=a^{15/8}|\mathcal{C}_0(\omega)|$ be the corresponding renormalized area. For a configuration $\omega$ from the measure $\mathbb{P}^a_{Q,w,0}$, let $\mathscr{C}(D^a,w,\omega)=\{\mathcal{C}_0,\mathcal{C}_1,\mathcal{C}_2,\ldots\}$ where $\mathcal{C}_0$ is the boundary cluster. Define $A_i(\omega):=a^{15/8}|\mathcal{C}_i|$ for each $i\geq 0$. Let $\tilde{\mathbb{E}}^a_{Q,w,h}$ be expectation with respect to $\tilde{\mathbb{P}}^a_{Q,w,h}$.
\begin{proposition}\label{prop:mgfbdh}
For any $a>0$, $h\geq 0$ and $t>0$, we have
\[\tilde{\mathbb{E}}^a_{Q,w,h}\left(e^{tA_0}\right)\leq  2 e^{C_5\left((t+h)^2+(t+h)\right)}\]
where $C_5$ is as in Lemma \ref{lem:mgf}.
\end{proposition}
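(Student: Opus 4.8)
\textbf{Proof proposal for Proposition \ref{prop:mgfbdh}.}

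The plan is to reduce the statement, via the Radon--Nikodym derivative of Proposition \ref{propFKcon}, to a single moment generating function bound for the renormalized magnetization, and then to invoke Lemma \ref{lem:mgf} with the substitution $t\mapsto t+h$. First I would write out, using Proposition \ref{propFKcon} with $\rho=w$ (recall that for $h=0$ the external boundary edges are closed, so Proposition \ref{propFKcon} applies to the $w$ boundary condition), the expectation
\[
\tilde{\mathbb{E}}^a_{Q,w,h}\left(e^{tA_0}\right)
=\frac{\mathbb{E}^a_{Q,w,0}\left[e^{tA_0}\prod_{\mathcal{C}\in\mathscr{C}(Q^a,w,\cdot)}\cosh(ha^{15/8}|\mathcal{C}|)\right]}
{\mathbb{E}^a_{Q,w,0}\left[\prod_{\mathcal{C}\in\mathscr{C}(Q^a,w,\cdot)}\cosh(ha^{15/8}|\mathcal{C}|)\right]}.
\]
The denominator is $\geq 1$ by the leftmost inequality of Proposition \ref{prop:mgf} (since every $\cosh\geq 1$), so it can simply be dropped, leaving us to bound the numerator.

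Next I would bound the numerator by replacing $e^{tA_0}$ with $2\cosh(tA_0)$ (using $e^r\le 2\cosh r$), and then absorbing this factor into the product: since $A_0=a^{15/8}|\mathcal{C}_0|$ with $\mathcal{C}_0$ the boundary cluster, one has
\[
e^{tA_0}\prod_{\mathcal{C}\in\mathscr{C}(Q^a,w,\cdot)}\cosh(ha^{15/8}|\mathcal{C}|)
\le 2\cosh\!\big((t+h)a^{15/8}|\mathcal{C}_0|\big)\prod_{\mathcal{C}\neq\mathcal{C}_0}\cosh(ha^{15/8}|\mathcal{C}|),
\]
using $\cosh(\alpha x)\cosh(\beta x)\le\cosh((\alpha+\beta)x)\cosh((\beta-\alpha)x)\le\cosh((\alpha+\beta)x)\cdot$ something—actually the clean inequality I want is $e^{tr}\cosh(hr)\le 2\cosh((t+h)r)$ for $r\ge 0$, $t,h\ge 0$, which follows from $e^{tr}\cosh(hr)=\tfrac12(e^{(t+h)r}+e^{(t-h)r})\le e^{(t+h)r}\le 2\cosh((t+h)r)$. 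Applying this only to the boundary cluster term and keeping the remaining $\cosh$ factors as they are, the numerator is at most
\[
2\,\mathbb{E}^a_{Q,w,0}\left[\cosh\!\big((t+h)a^{15/8}|\mathcal{C}_0|\big)\prod_{\mathcal{C}\neq\mathcal{C}_0}\cosh\!\big((t+h)a^{15/8}|\mathcal{C}|\big)\right]
=2\,\mathbb{E}^a_{Q,w,0}\left[\prod_{\mathcal{C}\in\mathscr{C}(Q^a,w,\cdot)}\cosh\!\big((t+h)a^{15/8}|\mathcal{C}|\big)\right],
\]
where in the inequality I have bounded each of the other $\cosh(ha^{15/8}|\mathcal{C}|)$ by $\cosh((t+h)a^{15/8}|\mathcal{C}|)$ (monotonicity of $\cosh$ on $[0,\infty)$ and $t\ge 0$). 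Finally, by Proposition \ref{prop:mgf} (with $h$ replaced by $t+h$ and $\rho=w$), this last expectation equals $E^a_{Q,w,0}(e^{(t+h)m^a_Q})\le E^a_{Q,+,0}(e^{(t+h)m^a_Q})$, and Lemma \ref{lem:mgf} bounds the latter by $e^{C_5((t+h)+(t+h)^2)}$. Chaining these gives $\tilde{\mathbb{E}}^a_{Q,w,h}(e^{tA_0})\le 2e^{C_5((t+h)^2+(t+h))}$, as claimed; the case $h=0$ is exactly Proposition \ref{prop:mgfbd}.

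I do not anticipate a serious obstacle here — the argument is a bookkeeping exercise once one sees that the external field acts exactly like an extra additive contribution $h$ to the exponential parameter $t$. The one point requiring a little care is the elementary inequality bundling $e^{tA_0}$ into the $\cosh$ product uniformly in the (random) number and sizes of the clusters; the bound $e^{tr}\cosh(hr)\le 2\cosh((t+h)r)$ for $r,t,h\ge 0$ handles the boundary cluster cleanly, and plain monotonicity of $\cosh$ on the nonnegative reals handles the rest. One should also note that Proposition \ref{propFKcon} requires $\rho|_{\mathscr{E}((D^a)^C)}\equiv 0$, which holds for the $w$ boundary condition by our convention, so the Radon--Nikodym identity is legitimately available.
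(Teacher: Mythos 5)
Your proposal is correct and follows essentially the same route as the paper's proof: apply the Radon--Nikodym identity of Proposition \ref{propFKcon}, drop the denominator since it is at least $1$, absorb $e^{tA_0}$ into the $\cosh$ product via $e^{tr}\cosh(hr)\le 2\cosh((t+h)r)$ for the boundary cluster and monotonicity of $\cosh$ for the rest, and finish with Proposition \ref{prop:mgf} and Lemma \ref{lem:mgf}. No gaps.
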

\begin{proof}
Proposition \ref{propFKcon} implies
\begin{align*}
\tilde{\mathbb{E}}^a_{Q,w,h}\left(e^{tA_0}\right)&=\frac{\mathbb{E}^a_{Q,w,0}\left(e^{tA_0}\prod_{\mathcal{C}\in \mathscr{C}(Q^a,w,\cdot)}\cosh(ha^{15/8}|\mathcal{C}|)\right)}{\mathbb{E}_{Q,w,0}^a\left[\prod_{\mathcal{C}\in \mathscr{C}(Q^a,w,\cdot)}\cosh(ha^{15/8}|\mathcal{C}|)\right]}\\
&\leq \mathbb{E}^a_{Q,w,0}\left(e^{tA_0}\prod_{i\geq 0} \cosh(hA_i)\right)\\
&\leq 2\mathbb{E}^a_{Q,w,0}\left(\prod_{i\geq 0} \cosh\left((t+h)A_i\right)\right),
\end{align*}
where the last inequality follows from $e^{tr}\cosh(hr)\leq 2\cosh((t+h)r)$ and $\cosh(hs)\leq \cosh((t+h)s)$, valid for any $r,s\geq 0$. The proof is completed by using Proposition \ref{prop:mgf} and Lemma \ref{lem:mgf}.
\end{proof}
The following proposition is about the one-arm event for $\tilde{\mathbb{P}}^a_{Q,w,h}$.
\begin{proposition}\label{prop:onearm}
For any $a>0$ and $h\geq0$, we have
\[\tilde{\mathbb{P}}^a_{Q,w,h}(0\longleftrightarrow\partial_{in} Q^a)\leq C_7(h)a^{1/8},\]
where $C_7(h)\in(0,\infty)$ only depends on $h$.
\end{proposition}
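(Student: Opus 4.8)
The plan is to reduce the one-arm estimate for the tilted measure $\tilde{\mathbb{P}}^a_{Q,w,h}$ to the corresponding $h=0$ estimate, which is available from Lemma \ref{lem1arm} (via the comparison of boundary conditions in Lemma \ref{lem3}), by controlling the Radon-Nikodym derivative \eqref{eqRN}. Write $G_h(\omega):=\prod_{\mathcal{C}\in\mathscr{C}(Q^a,w,\omega)}\cosh(ha^{15/8}|\mathcal{C}|)$, so that Proposition \ref{propFKcon} gives
\begin{equation}
\tilde{\mathbb{P}}^a_{Q,w,h}(0\longleftrightarrow\partial_{in}Q^a)=\frac{\mathbb{E}^a_{Q,w,0}\left(\mathbf{1}_{\{0\longleftrightarrow\partial_{in}Q^a\}}\,G_h\right)}{\mathbb{E}^a_{Q,w,0}(G_h)}.
\end{equation}
The denominator is $\geq 1$ since each factor $\cosh\geq 1$, so the whole problem is to bound the numerator by $C_7(h)a^{1/8}$.

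First I would apply Cauchy-Schwarz to the numerator: $\mathbb{E}^a_{Q,w,0}(\mathbf{1}_{\{0\longleftrightarrow\partial_{in}Q^a\}}G_h)\leq \left(\mathbb{P}^a_{Q,w,0}(0\longleftrightarrow\partial_{in}Q^a)\right)^{1/2}\left(\mathbb{E}^a_{Q,w,0}(G_h^2)\right)^{1/2}$. The first factor is $\leq C_1^{1/2}a^{1/16}$ by Lemma \ref{lem1arm} together with Lemma \ref{lem3} — but this only gives $a^{1/16}$, not $a^{1/8}$, so Cauchy-Schwarz in this crude form is too lossy. Instead I would use Hölder with exponents $(p,q)$, $1/p+1/q=1$: $\mathbb{E}^a_{Q,w,0}(\mathbf{1}_{A}G_h)\leq \mathbb{P}^a_{Q,w,0}(A)^{1/p}\,\mathbb{E}^a_{Q,w,0}(G_h^q)^{1/q}$ with $A=\{0\longleftrightarrow\partial_{in}Q^a\}$. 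Since $\mathbb{P}^a_{Q,w,0}(A)\leq C_1 a^{1/8}$ (Lemmas \ref{lem1arm} and \ref{lem3}), the $\mathbb{P}(A)^{1/p}$ factor contributes $a^{1/(8p)}$, and we would need the exponent to be exactly $1/8$, i.e. $p=1$ — which is not allowed. So Hölder alone also cannot preserve the exponent; the moment factor $\mathbb{E}(G_h^q)^{1/q}$ must be shown to shrink like a power of $a$ that compensates the deficit $a^{1/8-1/(8p)}$. This is the main obstacle, and I would resolve it as follows.

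The key observation is that on the event $A$, the cluster of the origin reaches the boundary, hence coincides with the boundary cluster $\mathcal{C}_0$; but crucially, $G_h(\omega)$ is \emph{not} well-approximated by $1$ unless all clusters are small. The right move is to split off the boundary cluster: write $G_h=\cosh(hA_0)\cdot \prod_{i\geq 1}\cosh(hA_i)$ where $A_0=a^{15/8}|\mathcal{C}_0|$ and the remaining product is over non-boundary clusters, and bound the $\prod_{i\geq 1}$ part by $\prod_{i\geq 0}$. Then, conditioning on the boundary cluster, use that $\mathbb{E}^a_{Q,w,0}\big(\prod_{i\geq 0}\cosh(hA_i)\big)=E^a_{Q,+,0}(e^{hm^a_Q})\leq e^{C_5(h+h^2)}$ by Propositions \ref{prop:mgf} and Lemma \ref{lem:mgf} — a bound \emph{uniform in $a$}. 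Since on $A$ the origin's cluster is the boundary cluster, and $\cosh(hA_0)\leq e^{hA_0}$ with $A_0\leq a^{15/8}|Q^a|\leq C h\cdot$(bounded), in fact $A_0$ is order-$1$ bounded only in expectation, not pointwise; so the cleanest route is: on $A$, bound $\mathbf{1}_A G_h\leq \mathbf{1}_A e^{hA_0}\prod_{i\geq 1}\cosh(hA_i)$, use the independence/FKG structure from Proposition \ref{prop:tanh}-style reasoning to factor the expectation over the boundary cluster from the rest, apply Lemma \ref{lem1arm} to the (conditional) probability of $A$, and absorb the exponential moment of $A_0$ and of the product into a finite constant $C_7(h)$ via Lemma \ref{lem:mgf} and Proposition \ref{prop:mgf}.

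Concretely, I expect the argument to run: $\tilde{\mathbb{P}}^a_{Q,w,h}(A)\leq \mathbb{E}^a_{Q,w,0}(\mathbf{1}_A G_h)$; then use Hölder with a small $q>1$ so that $\mathbb{P}^a_{Q,w,0}(A)^{1-1/q}\leq (C_1 a^{1/8})^{1-1/q}$ and the extra $a^{-(1/8)(1-1/q)}$-type slack is killed because the $h=0$ one-arm probability of a \emph{single} square with wired boundary is itself, by the same Lemma \ref{lem1arm}, not merely $O(a^{1/8})$ but admits the matching lower bound $\tilde C_1 a^{1/8}$ — so actually one should iterate over $q\downarrow 1$ only if the $G_h$ moment stays bounded, which it does by Proposition \ref{prop:mgf}+Lemma \ref{lem:mgf} uniformly in $q$ on compact $h$-ranges. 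Taking $q\to 1$ along a sequence, or simply choosing $q=2$ and then improving $1/16$ to $1/8$ by re-running the one-arm input at the true exponent inside the conditional measure given the boundary cluster, yields $\tilde{\mathbb{P}}^a_{Q,w,h}(A)\leq C_7(h)a^{1/8}$ with $C_7(h)=C_1\cdot e^{C_5(h+h^2)}$ (or a similar explicit expression), finite for every fixed $h\geq 0$, which is exactly the claim. The delicate point to get right is ensuring the power of $a$ is not degraded below $1/8$; I would handle it by keeping the one-arm indicator paired with the full derivative and invoking the uniform-in-$a$ moment bound $E^a_{Q,+,0}(e^{hm^a_Q})\leq e^{C_5(h+h^2)}$ rather than splitting via Hölder at all — i.e. simply $\mathbb{E}^a_{Q,w,0}(\mathbf{1}_A G_h)\leq \|G_h\|_{L^\infty}^{?}$ fails since $G_h$ is unbounded, so the correct final form really is: bound $G_h\le \prod_{i}\cosh(hA_i)$, note this product has expectation $\le e^{C_5(h+h^2)}$, but it is \emph{not} independent of $A$ — however by the FKG inequality $A$ is increasing and $G_h$ is increasing, so $\mathbb{E}(\mathbf{1}_A G_h)\ge \mathbb{P}(A)\mathbb{E}(G_h)$ goes the wrong way; hence the honest tool is Hölder as above with the observation that $\mathbb{E}^a_{Q,w,0}(G_h^q)\le E^a_{Q,+,0}(e^{qh m^a_Q})\le e^{C_5(qh+q^2h^2)}$ is bounded uniformly in $a$ for each fixed $q$, so choosing $q$ close to $1$ makes the exponent $1/(8p)=\frac{1}{8}(1-1/q)^{-1}\cdot$— wait, $1/p=1-1/q\to 1$ as $q\to\infty$, giving exponent $\to 1/8$; so taking $q$ large (not small) sends $1/(8p)\to 1/8$, at the cost of $\mathbb{E}(G_h^q)^{1/q}\le e^{C_5(h+q h^2)/1}\cdot$(still fine, since the $1/q$ power tames it: $\mathbb{E}(G_h^q)^{1/q}\le e^{C_5(h+qh^2)/q}\to e^{C_5 h^2}$). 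Thus: fix $q$ large, get $\tilde{\mathbb{P}}^a_{Q,w,h}(A)\le (C_1 a^{1/8})^{1-1/q}e^{C_5 h^2+C_5 h/q}$, and finally take $q\to\infty$: for each fixed $a\le 1$, $a^{(1/8)(1-1/q)}\to a^{1/8}$, yielding $\tilde{\mathbb{P}}^a_{Q,w,h}(A)\le C_1 a^{1/8}e^{C_5 h^2}=:C_7(h)a^{1/8}$. The main obstacle, then, is realizing that Hölder must be pushed with $q\to\infty$ rather than $q\to 1$, and checking that the $q$-th moment of the Radon-Nikodym derivative stays controlled after the $1/q$-th root — which it does, uniformly in $a$, precisely by Proposition \ref{prop:mgf} combined with Lemma \ref{lem:mgf}.
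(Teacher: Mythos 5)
There is a genuine gap, and it sits precisely in the step you flagged as delicate: the claim that Hölder with $q\to\infty$ preserves the exponent $1/8$. Your bound on the $q$-th moment should read
\[
\mathbb{E}^a_{Q,w,0}(G_h^q)\ \le\ E^a_{Q,+,0}\!\left(e^{qh\,m^a_Q}\right)\ \le\ e^{C_5(qh+q^2h^2)},
\qquad\text{hence}\qquad
\left(\mathbb{E}^a_{Q,w,0}(G_h^q)\right)^{1/q}\le e^{C_5(h+q h^2)},
\]
which \emph{diverges} as $q\to\infty$ (for $h>0$). You wrote $e^{C_5(h+qh^2)/q}\to e^{C_5h^2}$, dividing by $q$ a second time after the $1/q$-th power has already been taken. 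With the correct expression, the Hölder bound reads $(C_1a^{1/8})^{1-1/q}e^{C_5(h+qh^2)}$; optimizing over $q$ gives at best something of order $a^{1/8}\exp\!\big(c\,h\sqrt{\log(1/a)}\big)$, which is \emph{not} $C_7(h)a^{1/8}$ and in fact degrades to $a^{1/8-\varepsilon}$ for every $\varepsilon>0$. This is not a fixable cosmetic issue: $G_h$ is genuinely unbounded with heavy (stretched-exponential) tails, and because the one-arm event positively correlates with large $G_h$ (both are increasing), no Hölder/FKG decoupling can recover the sharp power. Your earlier idea of splitting off the boundary cluster and conditioning on it (as in the proof of Proposition \ref{prop:mgfcbd}) also leads back to a quantity of the same type as the one you are trying to bound, so it does not break the circularity either.

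The paper sidesteps the FK Radon--Nikodym derivative entirely in this proposition. It uses the Edwards--Sokal coupling to compare the one-arm probability to a spin expectation,
\[
\tilde{\mathbb{P}}^a_{Q,w,h}(0\leftrightarrow\partial_{in}Q^a)\ \le\ \mathbb{P}^a_{Q,\bar w,h}(0\leftrightarrow g)\ =\ E^a_{Q,+,h}(\sigma_0),
\]
then bounds $E^a_{Q,+,h}(\sigma_0)$ by averaging $\sigma_z$ over $z\in Q^a_{1/2}$ (monotonicity in domain plus translation invariance), writing it in terms of the renormalized magnetization $m^a_{Q_{1/2},h}$. There the factor $a^{1/8}$ arises purely from the dimensional count $a^{-15/8}/|Q^a_{1/2}|\sim 4a^{1/8}$, so no power is at risk; the $h$-dependence is controlled via the \emph{Ising} RN derivative $e^{hm}/E(e^{hm})$ together with Lemma \ref{lem:mgf} and Jensen. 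If you want to salvage a proof, that route --- move to the spin side where the $a^{1/8}$ is a renormalization artifact rather than a percolation estimate that must survive a Hölder split --- is the one that works.
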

\begin{proof}
The $h=0$ case follows from Lemma \ref{lem1arm}, so we assume $h>0$ in the rest of the proof. Let $E^a_{Q,+,h}$ be the expectation with respect to $P^a_{Q,+,h}$. Then, by the Edwards-Sokal coupling and the FKG inequality (recall that the subscripts $\bar{w}$ and $w$ refer to wired boundary conditions, see the discussion after \eqref{eqFKdef}),
\begin{align}\label{eq:esm}
E^a_{Q,+,h}(\sigma_0)&=\mathbb{P}^a_{Q,\bar{w},h}(0\longleftrightarrow g)\geq\mathbb{P}^a_{Q,\bar{w},h}(0\overset{a\mathbb{Z}^2}\longleftrightarrow \partial_{in} Q^a)\nonumber\\
&\geq \mathbb{P}^a_{Q,w,h}(0\overset{a\mathbb{Z}^2}\longleftrightarrow \partial_{in} Q^a)=\tilde{\mathbb{P}}^a_{Q,w,h}(0\longleftrightarrow\partial_{in} Q^a).
\end{align}
Let $Q_{1/2}:=[-1/4,1/4]^2$ and $Q^a_{1/2}$ be its a-approximation. Then by the domain Markov property and FKG inequality,
\[E^a_{Q,+,h}(\sigma_0)= E^a_{z+Q,+,h}(\sigma_z)\leq E^a_{Q_{1/2},+,h}(\sigma_z) \text{ for any }z\in Q^a_{1/2}\]
since $Q^a_{1/2}\subseteq z+Q^a$ for each such $z$. Therefore
\begin{equation}\label{eq:spinm}
E^a_{Q,+,h}(\sigma_0)\leq \frac{1}{|Q^a_{1/2}|}\sum_{z\in Q^a_{1/2}}E^a_{Q_{1/2},+,h}(\sigma_z),
\end{equation}
where $|Q^a_{1/2}|$ is the number of vertices in $Q^a_{1/2}$.

Let $m^a_{Q_{1/2},h}:=a^{15/8}\sum_{z\in Q^a_{1/2}}\sigma_z$. Using the Radon-Nikodym derivative of $P^a_{Q_{1/2},+,h}$ with respect to $P^a_{Q_{1/2},+,0}$ (see the proof of Proposition 1.5 in \cite{CGN16}),
\begin{equation}\label{eq:mrn}
E^a_{Q_{1/2},+,h}\left(m^a_{Q_{1/2},h}\right)=\frac{E^a_{Q_{1/2},+,0}\left(m^a_{Q_{1/2}}e^{hm^a_{Q_{1/2}}}\right)}{E^a_{Q_{1/2},+,0}\left(e^{hm^a_{Q_{1/2}}}\right)}.
\end{equation}
Note that
\begin{align}\label{eq:mup}
E^a_{Q_{1/2},+,0}\left(m^a_{Q_{1/2}}e^{hm^a_{Q_{1/2}}}\right)&\leq E^a_{Q_{1/2},+,0}\left(e^{m^a_{Q_{1/2}}}e^{hm^a_{Q_{1/2}}}\right)\nonumber\\
&=E^a_{Q_{1/2},+,0}\left(e^{(h+1)m^a_{Q_{1/2}}}\right).
\end{align}
By Jensen's inequality,
\begin{equation}\label{eq:jensen}
E^a_{Q_{1/2},+,0}\left(e^{hm^a_{Q_{1/2}}}\right)\geq e^{h E^a_{Q_{1/2},+,0}\left(m^a_{Q_{1/2}}\right)}\geq 1,
\end{equation}
since $E^a_{Q_{1/2},+,0}\left(m^a_{Q_{1/2}}\right)\geq E^a_{Q_{1/2},f,0}\left(m^a_{Q_{1/2}}\right)=0$ by the FKG inequality.
Combining \eqref{eq:jensen}, \eqref{eq:mup}, \eqref{eq:mrn} and \eqref{eq:spinm}, we get
\[E^a_{Q,+,h}(\sigma_0)\leq \frac{a^{-2}}{|Q^a_{1/2}|}a^{1/8}E^a_{Q_{1/2},+,0}\left(e^{(h+1)m^a_{Q_{1/2}}}\right)\leq C_7(h)a^{1/8},\]
where the last inequality with $C_7(h)<\infty$ follows from  $a^{-2}/|Q^a_{1/2}|\rightarrow 4$ as $a\downarrow 0$ and a similar argument as in the proof of Proposition 3.5 of \cite{CGN15} (see also Lemma \ref{lem:mgf} above).
This and \eqref{eq:esm} complete the proof.
\end{proof}

Next, we will show that the moment generating function of the boundary cluster from $\tilde{\mathbb{P}}^a_{Q,w,h}$ is still finite even after conditioning on the event $\{0\longleftrightarrow \partial_{in} Q^a\}$. Recall the definitions of $A_0, A_1, \dots$ right before Proposition \ref{prop:mgfbdh}.
\begin{proposition}\label{prop:mgfcbd}
For any $a>0$, $h\geq0$ and $t>0$, we have
\[\tilde{\mathbb{E}}^a_{Q,w,h}\left(e^{tA_0}|0\longleftrightarrow \partial_{in} Q^a\right)\leq C_8(t+h)e^{C_5\left(h+h^2+(t+h)+(t+h)^2\right)},\]
where $C_8(t+h)\in(0,\infty)$ only depends on $t+h$ and $C_5$ is the same as in Lemma \ref{lem:mgf}.
\end{proposition}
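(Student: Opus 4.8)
The plan is to condition on the one-arm event and run essentially the same Radon--Nikodym-plus-Edwards--Sokal argument as in Proposition~\ref{prop:mgfbdh}, but now keeping track of the conditioning. First I would write, using Proposition~\ref{propFKcon},
\[
\tilde{\mathbb{E}}^a_{Q,w,h}\left(e^{tA_0}\mathbf{1}_{0\longleftrightarrow \partial_{in} Q^a}\right)
=\frac{\mathbb{E}^a_{Q,w,0}\left(e^{tA_0}\mathbf{1}_{0\longleftrightarrow \partial_{in} Q^a}\prod_{i\geq 0}\cosh(hA_i)\right)}{\mathbb{E}^a_{Q,w,0}\left(\prod_{i\geq 0}\cosh(hA_i)\right)},
\]
and correspondingly $\tilde{\mathbb{P}}^a_{Q,w,h}(0\longleftrightarrow\partial_{in}Q^a)$ is the same expression with $e^{tA_0}$ removed. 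Dividing, the denominator $\mathbb{E}^a_{Q,w,0}\left(\prod_{i\geq 0}\cosh(hA_i)\right)$ cancels, so
\[
\tilde{\mathbb{E}}^a_{Q,w,h}\left(e^{tA_0}\mid 0\longleftrightarrow \partial_{in} Q^a\right)
=\frac{\mathbb{E}^a_{Q,w,0}\left(e^{tA_0}\mathbf{1}_{0\longleftrightarrow \partial_{in} Q^a}\prod_{i\geq 0}\cosh(hA_i)\right)}{\mathbb{E}^a_{Q,w,0}\left(\mathbf{1}_{0\longleftrightarrow \partial_{in} Q^a}\prod_{i\geq 0}\cosh(hA_i)\right)}.
\]
Since $0\longleftrightarrow\partial_{in}Q^a$ means $0$ lies in the boundary cluster $\mathcal{C}_0$ under the wired boundary condition (because $\partial_{in}Q^a$ is entirely wired together), the denominator is at least $\mathbb{E}^a_{Q,w,0}\left(\mathbf{1}_{0\in\mathcal{C}_0}\prod_{i\geq 1}\cosh(hA_i)\right)\geq \mathbb{P}^a_{Q,w,0}(0\in\mathcal{C}_0)\cdot 1$ after dropping the $i=0$ factor and using $\cosh\geq 1$; more carefully I would simply bound the denominator below by $\mathbb{E}^a_{Q,w,0}\left(\mathbf{1}_{0\longleftrightarrow\partial_{in}Q^a}\right)=\mathbb{P}^a_{Q,w,0}(0\longleftrightarrow\partial_{in}Q^a)$, which by Lemma~\ref{lem1arm} is $\geq \tilde{C}_1 a^{1/8}$. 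For the numerator, bound $\mathbf{1}_{0\longleftrightarrow\partial_{in}Q^a}\leq 1$ and apply exactly the estimates in the proof of Proposition~\ref{prop:mgfbdh}: $e^{tA_0}\cosh(hA_0)\leq 2\cosh((t+h)A_0)\leq 2\cosh((t+h)A_0)\cdot\prod_{i\geq 1}\cosh((t+h)A_i)\cdot\big(\prod_{i\geq1}\cosh(hA_i)\big)^{-1}\cdot\prod_{i\geq1}\cosh(hA_i)$; cleanly, $e^{tA_0}\prod_{i\geq0}\cosh(hA_i)\leq 2\prod_{i\geq0}\cosh((t+h)A_i)\cdot\prod_{i\geq0}\cosh(hA_i)$ is wasteful, so instead I would split: $\prod_{i\geq 0}\cosh(hA_i)\le e^{hm^a_Q}$ pointwise? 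No --- the right move is Cauchy--Schwarz. Write the numerator as $\mathbb{E}^a_{Q,w,0}\left(e^{tA_0}\prod_{i\geq0}\cosh(hA_i)\right)$ and apply Cauchy--Schwarz to split $e^{tA_0}$ and the cosh product? That loses the clean exponent.

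A cleaner route: bound $e^{tA_0}\prod_{i\ge 0}\cosh(hA_i)\le 2\,e^{tA_0}\prod_{i\ge 0}\cosh((t+h)A_i)\le 4\prod_{i\ge 0}\cosh((t+h)A_i)$ is false since the first step already used a factor; let me instead just use $e^{tA_0}\cosh(hA_0)\le 2\cosh((t+h)A_0)$ and $\cosh(hA_i)\le\cosh((t+h)A_i)$ for $i\ge1$, giving numerator $\le 2\,\mathbb{E}^a_{Q,w,0}\big(\prod_{i\ge0}\cosh((t+h)A_i)\big)=2\,E^a_{Q,w,0}(e^{(t+h)m^a_Q})\le 2\,e^{C_5((t+h)+(t+h)^2)}$ by Proposition~\ref{prop:mgf} and Lemma~\ref{lem:mgf}. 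Then
\[
\tilde{\mathbb{E}}^a_{Q,w,h}\left(e^{tA_0}\mid 0\longleftrightarrow \partial_{in} Q^a\right)\le \frac{2\,e^{C_5((t+h)+(t+h)^2)}}{\tilde{C}_1 a^{1/8}},
\]
which has the wrong $a$-dependence --- it blows up as $a\downarrow0$. So the lower bound on the denominator must not throw away the one-arm probability naively; instead I must \emph{also} extract a factor $a^{1/8}$ from the numerator.

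This is the crux, and I expect it to be the main obstacle: one needs the numerator to be bounded by (something like) $a^{1/8}$ times a constant depending on $t+h$, matching the $a^{1/8}$ lost in the denominator. To get this, I would not drop $\mathbf{1}_{0\longleftrightarrow\partial_{in}Q^a}$ but rather mimic the argument of Proposition~\ref{prop:onearm}: the extra constraint $\{0\longleftrightarrow\partial_{in}Q^a\}$ under the tilted-by-$\prod\cosh$ measure should still cost $a^{1/8}$ uniformly in $a$. Concretely, I would relate $\mathbb{E}^a_{Q,w,0}\left(e^{tA_0}\mathbf{1}_{0\longleftrightarrow\partial_{in}Q^a}\prod_{i\ge0}\cosh(hA_i)\right)$ to a magnetization quantity via Edwards--Sokal as in Proposition~\ref{prop:mgf}/Proposition~\ref{prop:onearm}: tilting the $h=0$ FK measure by $\prod_i\cosh((t+h)A_i)$ turns it into the Ising-coupled FK measure with field $t+h$ (up to a normalizing constant $\le e^{C_5((t+h)+(t+h)^2)}$), and under that measure the event $0\longleftrightarrow\partial_{in}Q^a$ is dominated by $\mathbb{P}^a_{Q,\bar w,t+h}(0\longleftrightarrow g)=E^a_{Q,+,t+h}(\sigma_0)\le C_7(t+h)a^{1/8}$ by Proposition~\ref{prop:onearm} applied with field $t+h$. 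Assembling: numerator $\le 2\,e^{C_5((t+h)+(t+h)^2)}\cdot C_7(t+h)a^{1/8}$ (roughly), denominator $\ge \tilde{C}_1 a^{1/8}$ up to a harmless factor involving $h$ (here one uses $\prod_i\cosh(hA_i)\ge1$ and a matching one-arm lower bound, or bounds the denominator below by $c(h)a^{1/8}$ via the same Edwards--Sokal identity with field $h$ and the lower bound in Lemma~\ref{lem1arm} together with an $E^a_{Q,w,0}(e^{hm^a_Q})\ge1$ type estimate from \eqref{eq:jensen}). The $a^{1/8}$ factors cancel and one is left with a bound of the form $C_8(t+h)\,e^{C_5(h+h^2+(t+h)+(t+h)^2)}$. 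I would set $C_8(t+h):=2C_7(t+h)/c$ for the appropriate absolute constant $c$, which is finite and depends only on $t+h$, as claimed. The only genuinely delicate point is making the ``tilt-then-apply-one-arm'' step rigorous --- i.e., verifying that after reweighting by $\prod_i\cosh((t+h)A_i)$ the resulting measure is (proportional to) a bona fide FK measure with field $t+h$ to which Proposition~\ref{prop:onearm} applies with wired rather than free boundary --- and keeping the boundary conditions consistent throughout; everything else is the same algebra ($e^{tr}\cosh(hr)\le2\cosh((t+h)r)$, $\cosh\ge1$, FKG) already used above.
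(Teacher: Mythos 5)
Your final plan is correct and it is in fact a streamlined version of the paper's argument. Both proofs hinge on the same two ideas: (i) after writing the conditional expectation as a ratio of $\mathbb{E}^a_{Q,w,0}$-expectations via Proposition~\ref{propFKcon}, one must \emph{not} discard the one-arm indicator in the numerator (that costs a fatal $a^{-1/8}$); and (ii) instead one upgrades the cosh factors to field strength $t+h$ so that the numerator becomes $2\,\tilde{\mathbb{P}}^a_{Q,w,t+h}(0\longleftrightarrow\partial_{in}Q^a)\cdot\mathbb{E}^a_{Q,w,0}\bigl(\prod_i\cosh((t+h)A_i)\bigr)$, which by Propositions~\ref{prop:onearm},~\ref{prop:mgf} and Lemma~\ref{lem:mgf} is $\leq 2C_7(t+h)a^{1/8}e^{C_5((t+h)+(t+h)^2)}$, and then the $a^{1/8}$ cancels against $\tilde{\mathbb{P}}^a_{Q,w,h}(0\longleftrightarrow\partial_{in}Q^a)\geq\tilde{C}_1 a^{1/8}$ in the denominator. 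You identified precisely this crux.

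Where you differ from the paper --- to your advantage --- is in how you handle the factors $\prod_{i\geq1}\cosh(hA_i)$. The paper conditions on the realization $\mathcal{C}_0=\Gamma$ of the boundary cluster, uses the domain Markov property and three applications of GKS to bound $\mathbb{E}^a_{Q,w,0}\bigl(\prod_{i\geq1}\cosh(hA_i)\mid\mathcal{C}_0=\Gamma\bigr)\leq e^{C_5(h+h^2)}$ uniformly in $\Gamma$, and then \emph{re-inserts} the factors $\prod_{i\geq1}\cosh((t+h)A_i)\geq1$ to reconstitute the tilted probability. You bypass this entirely by using the elementary monotonicity $\cosh(hA_i)\leq\cosh((t+h)A_i)$ directly for all $i\geq1$; this lands you at the same expression $2\,\mathbb{E}^a_{Q,w,0}\bigl(\mathbf{1}_{0\leftrightarrow\partial_{in}Q^a}\prod_{i\geq0}\cosh((t+h)A_i)\bigr)$ with no detour, and it even produces the slightly sharper bound $\frac{2C_7(t+h)}{\tilde{C}_1}e^{C_5((t+h)+(t+h)^2)}$, without the extra factor $e^{C_5(h+h^2)}$ that appears in the paper's statement (a smaller bound is of course still a valid upper bound). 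Finally, the worry you raise at the end is unfounded: Proposition~\ref{prop:onearm} is stated precisely for $\tilde{\mathbb{P}}^a_{Q,w,h}$, i.e.\ with wired boundary, so applying it with field $t+h$ under wired boundary is exactly what it allows; and the identification of the reweighted measure as $\tilde{\mathbb{P}}^a_{Q,w,t+h}$ is literally the content of Proposition~\ref{propFKcon}. Tidy up the false starts and the argument is complete.
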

\begin{proof}
By Proposition \ref{propFKcon},
\begin{align}\label{eq:mgfcbd1}
&\tilde{\mathbb{E}}^a_{Q,w,h}\left(e^{tA_0}1_{\{0\longleftrightarrow \partial_{in} Q^a\}}\right)=\frac{\mathbb{E}^a_{Q,w,0}\left(e^{tA_0}1_{\{0\longleftrightarrow \partial_{in} Q^a\}}\prod_{i\geq 0}\cosh(hA_i)\right)}{\mathbb{E}^a_{Q,w,0}\left(\prod_{i\geq 0}\cosh(hA_i)\right)}\nonumber\\
&\quad\leq 2\mathbb{E}^a_{Q,w,0}\left(\cosh\left((t+h)A_0\right)1_{\{0\longleftrightarrow \partial_{in} Q^a\}}\prod_{i\geq 1}\cosh(hA_i)\right)
\end{align}
since $e^{tA_0}\cosh(hA_0)\leq 2\cosh\left((t+h)A_0\right)$ and the denominator is larger than or equal to~1. Let $\Gamma\subseteq Q^a$ be a possible realization in $Q^a$ (with wired boundary conditions) of the cluster of $0$ (i.e., a lattice animal containing $0$) such that there is a path from $0$ to $\partial_{in}Q^a$ with each vertex on the path in $\Gamma$. Then
\begin{align}\label{eq:mgfcbd2}
&\mathbb{E}^a_{Q,w,0}\left(\cosh\left((t+h)A_0\right)1_{\{0\longleftrightarrow \partial_{in} Q^a\}}\prod_{i\geq 1}\cosh(hA_i)\right)\nonumber\\
&=\sum_{\Gamma} \cosh\left(\left(t+h\right)|\Gamma|\right)\mathbb{P}^a_{Q,w,0}(\mathcal{C}_0=\Gamma)\mathbb{E}^a_{Q,w,0}\left(\prod_{i\geq 1}\cosh(hA_i)|\mathcal{C}_0=\Gamma\right),
\end{align}
where $\mathcal{C}_0$ is the boundary cluster and thus also the cluster of $0$. Define
\[\bar{\Gamma}:=\{\text{edges }e\in Q^a: \text{ at least one endpoint of } e \text{ is in }\Gamma\},\]
so that $\bar{\Gamma}$ includes both the open edges in $\Gamma$ and the closed edges touching $\Gamma$.

Note that $\mathbb{P}^a_{Q,w,0}(\cdot|\mathcal{C}_0=\Gamma)$ is an FK measure on $Q^a\setminus \bar{\Gamma}$ with free boundary conditions. So by Proposition \ref{prop:mgf}, the GKS inequalities \cite{Gri67a,KS68} used three times and Lemma \ref{lem:mgf},
\begin{align}\label{eq:GKS}
&\mathbb{E}^a_{Q,w,0}\left(\prod_{i\geq 1}\cosh(hA_i)|\mathcal{C}_0=\Gamma\right)=\mathbb{E}^a_{Q\setminus\bar{\Gamma},f,0}\left(\prod_{i\geq 1}\cosh(hA_i)\right)\nonumber\\
&= E^a_{Q\setminus\bar{\Gamma},f,0}(e^{hm^a_{Q\setminus\bar{\Gamma}}})\leq E^a_{Q,f,0}(e^{hm^a_{Q\setminus\bar{\Gamma}}})\leq E^a_{Q,f,0}(e^{hm^a_{Q}})\nonumber\\
&\leq E^a_{Q,+,0}(e^{hm^a_{Q}})\leq e^{C_5(h+h^2)},
\end{align}
where the second inequality follows from expanding the exponentials on both sides and noticing that the extra terms from the RHS are non-negative by the GKS inequalities.

Therefore by \eqref{eq:mgfcbd1}, \eqref{eq:mgfcbd2} and \eqref{eq:GKS},
\begin{align}\label{eq:mgfc}
&\tilde{\mathbb{E}}^a_{Q,w,h}\left(e^{tA_0}1_{\{0\longleftrightarrow \partial_{in} Q^a\}}\right)\leq 2 \sum_{\Gamma} \cosh\left(\left(t+h\right)|\Gamma|\right)\mathbb{P}^a_{Q,w,0}(\mathcal{C}_0=\Gamma)e^{C_5(h+h^2)}\nonumber\\
&= 2e^{C_5(h+h^2)} \mathbb{E}^a_{Q,w,0}\left(\cosh\left((t+h)A_0\right)1_{\{0\longleftrightarrow\partial_{in} Q^a\}}\right)\nonumber\\
&\leq 2e^{C_5(h+h^2)}\mathbb{E}^a_{Q,w,0}\left(1_{\{0\longleftrightarrow\partial_{in} Q^a\}}\prod_{i\geq 0}\cosh\left((t+h)A_i\right)\right)\nonumber\\
&= 2e^{C_5(h+h^2)}\mathbb{E}^a_{Q,w,0}\left(\prod_{i\geq 0}\cosh\left((t+h)A_i\right)\right)\tilde{\mathbb{P}}^a_{Q,w,t+h}(0\longleftrightarrow\partial_{in} Q^a),
\end{align}
where the last equality holds because, by Proposition \ref{propFKcon},
\[\tilde{\mathbb{P}}^a_{Q,w,t+h}(0\longleftrightarrow\partial_{in} Q^a)=\frac{\mathbb{E}^a_{Q,w,0}\left(1_{\{0\longleftrightarrow\partial_{in} Q^a\}}\prod_{i\geq 0}\cosh\left((t+h)A_i\right)\right)}{\mathbb{E}^a_{Q,w,0}\left(\prod_{i\geq 0}\cosh\left((t+h)A_i\right)\right)}.\]
Proposition \ref{prop:mgf}, Lemma \ref{lem:mgf} and \eqref{eq:mgfc} imply
\begin{equation}\label{eq:mgfc1}
\tilde{\mathbb{E}}^a_{Q,w,h}\left(e^{tA_0}1_{\{0\longleftrightarrow \partial_{in} Q^a\}}\right)\leq 2\tilde{\mathbb{P}}^a_{Q,w,t+h}(0\longleftrightarrow\partial_{in} Q^a)e^{C_5\left(h+h^2+(t+h)+(t+h)^2\right)}.
\end{equation}
Note that by the FKG inequality and Lemma \ref{lem1arm}
\begin{equation}\label{eq:mgfc2}
\tilde{\mathbb{P}}^a_{Q,w,h}(0\longleftrightarrow\partial_{in} Q^a)\geq \mathbb{P}^a_{Q,w,0}(0\longleftrightarrow\partial_{in} Q^a)\geq \tilde{C}_1 a^{1/8}.
\end{equation}
Hence by \eqref{eq:mgfc1}, \eqref{eq:mgfc2} and Proposition \ref{prop:onearm}
\begin{align*}
\tilde{\mathbb{E}}^a_{Q,w,h}\left(e^{tA_0}|0\longleftrightarrow \partial_{in} Q^a\right)&=\frac{\tilde{\mathbb{E}}^a_{Q,w,h}\left(e^{tA_0}1_{\{0\longleftrightarrow \partial_{in} Q^a\}}\right)}{\tilde{\mathbb{P}}^a_{Q,w,h}(0\longleftrightarrow\partial_{in} Q^a)}\\
&\leq\frac{2\tilde{\mathbb{P}}^a_{Q,w,t+h}(0\longleftrightarrow\partial_{in} Q^a)}{\tilde{\mathbb{P}}^a_{Q,w,h}(0\longleftrightarrow\partial_{in} Q^a)}e^{C_5\left(h+h^2+(t+h)+(t+h)^2\right)}\\
&\leq C_8(t+h)e^{C_5\left(h+h^2+(t+h)+(t+h)^2\right)}
\end{align*}
with $C_8(t+h)=2C_7(t+h)/\tilde{C}_1$.
\end{proof}

\section{A lower bound for the correlation length (upper bound for the mass)}\label{sec:correlation}
In this section, we prove Theorem \ref{thm:upper}. We state and prove several lemmas first. In the first of these, the constant $C_5$ may be taken as in Lemma~\ref{lem:mgf}.

\begin{lemma}\label{lem:pos}
There is some $C_5\in (0,\infty)$ so that for any $a>0$, $h\geq0$, boundary condition $\rho$ on $Q^a$ and event $E\subseteq \{0,1\}^{\mathscr{B}(Q^a)}$,
\[\tilde{\mathbb{P}}^a_{Q,\rho,h}(E)\geq e^{-C_5(h+h^2)}\mathbb{P}^a_{Q,\rho,0}(E).\]
\end{lemma}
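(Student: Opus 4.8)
The plan is to express $\tilde{\mathbb{P}}^a_{Q,\rho,h}(E)$ via the Radon--Nikodym derivative from Proposition~\ref{propFKcon} and bound the cluster product from below by $1$. Concretely, for any event $E$,
\[
\tilde{\mathbb{P}}^a_{Q,\rho,h}(E)=\mathbb{E}^a_{Q,\rho,0}\!\left(1_E\,\frac{d\tilde{\mathbb{P}}^a_{Q,\rho,h}}{d\mathbb{P}^a_{Q,\rho,0}}\right)
=\frac{\mathbb{E}^a_{Q,\rho,0}\!\left(1_E\prod_{\mathcal{C}}\cosh(ha^{15/8}|\mathcal{C}|)\right)}{\mathbb{E}^a_{Q,\rho,0}\!\left(\prod_{\mathcal{C}}\cosh(ha^{15/8}|\mathcal{C}|)\right)},
\]
where the products run over $\mathcal{C}\in\mathscr{C}(Q^a,\rho,\cdot)$. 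Since $\cosh(r)\geq 1$ for all $r\in\mathbb{R}$, the numerator is at least $\mathbb{P}^a_{Q,\rho,0}(E)$, so it remains to bound the denominator from above by $e^{C_5(h+h^2)}$ uniformly in $\rho$ and $a$.

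For the denominator, I would invoke Proposition~\ref{prop:mgf}, which identifies $\mathbb{E}^a_{Q,\rho,0}\big[\prod_{\mathcal{C}}\cosh(ha^{15/8}|\mathcal{C}|)\big]$ with the Ising moment generating function $E^a_{Q,\rho,0}(e^{hm^a_Q})$ and bounds it above by $E^a_{Q,+,0}(e^{hm^a_Q})$ via FKG. Then Lemma~\ref{lem:mgf} gives $E^a_{Q,+,0}(e^{hm^a_Q})\leq e^{C_5(h+h^2)}$ for the same constant $C_5$, uniformly in $a>0$. Combining, the denominator is at most $e^{C_5(h+h^2)}$, hence
\[
\tilde{\mathbb{P}}^a_{Q,\rho,h}(E)\geq e^{-C_5(h+h^2)}\,\mathbb{P}^a_{Q,\rho,0}(E),
\]
which is the claim. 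The case $h=0$ is trivial since both sides coincide.

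There is essentially no obstacle here: the lemma is a direct packaging of Proposition~\ref{propFKcon}, Proposition~\ref{prop:mgf} and Lemma~\ref{lem:mgf}. The only minor point worth a word of care is that the Radon--Nikodym identity of Proposition~\ref{propFKcon} is stated for $\rho$ with $\rho|_{\mathscr{E}((D^a)^C)}\equiv 0$, which is exactly the standing convention for FK measures at $h=0$ used throughout, so the statement applies to all boundary conditions $\rho$ considered in the lemma; and that the constant $C_5$ appearing in Lemma~\ref{lem:mgf} (which is for the $+$ boundary condition on $Q$) is precisely what propagates through the FKG comparison, so the same $C_5$ works for every $\rho$.
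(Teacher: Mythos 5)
Your proof is correct and matches the paper's own argument essentially step for step: apply the Radon--Nikodym formula of Proposition~\ref{propFKcon}, lower-bound the numerator using $\cosh\geq 1$, and upper-bound the denominator via Proposition~\ref{prop:mgf} (FKG comparison to $+$ boundary conditions) and Lemma~\ref{lem:mgf}. Your remark about the boundary-condition convention $\rho|_{\mathscr{E}((Q^a)^C)}\equiv 0$ is a valid clarification but not a deviation from the paper's reasoning.
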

\begin{proof}
By Proposition \ref{propFKcon},
\begin{align*}
\tilde{\mathbb{P}}^a_{Q,\rho,h}(E)&=\frac{\sum_{\omega\in E}\mathbb{P}^a_{Q,\rho,0}(\omega)\prod_{i}\cosh(hA_i(\omega))}{\mathbb{E}^a_{Q,\rho,0}\left(\prod_{i}\cosh(A_i)\right)}\geq \frac{\mathbb{P}^a_{Q,\rho,0}(E)}{E^a_{Q,+,0}(e^{hm^a_Q})}\\
&\geq e^{-C_5(h+h^2)}\mathbb{P}^a_{Q,\rho,0}(E),
\end{align*}
where the first inequality follows since $\cosh(r)\geq 1$ for any $r\in\mathbb{R}$ and Proposition \ref{prop:mgf}, and the second inequality follows from Lemma \ref{lem:mgf}.
\end{proof}
\begin{remark}\label{rem:pos}
It is not hard to see that Lemma \ref{lem:pos} holds for more general domains. For example, below we will apply it to the domain $[0,\frac{1}{2}]\times[0,\frac{1}{4}]$.
\end{remark}
\begin{figure}
\begin{center}
\includegraphics[width=0.9\textwidth]{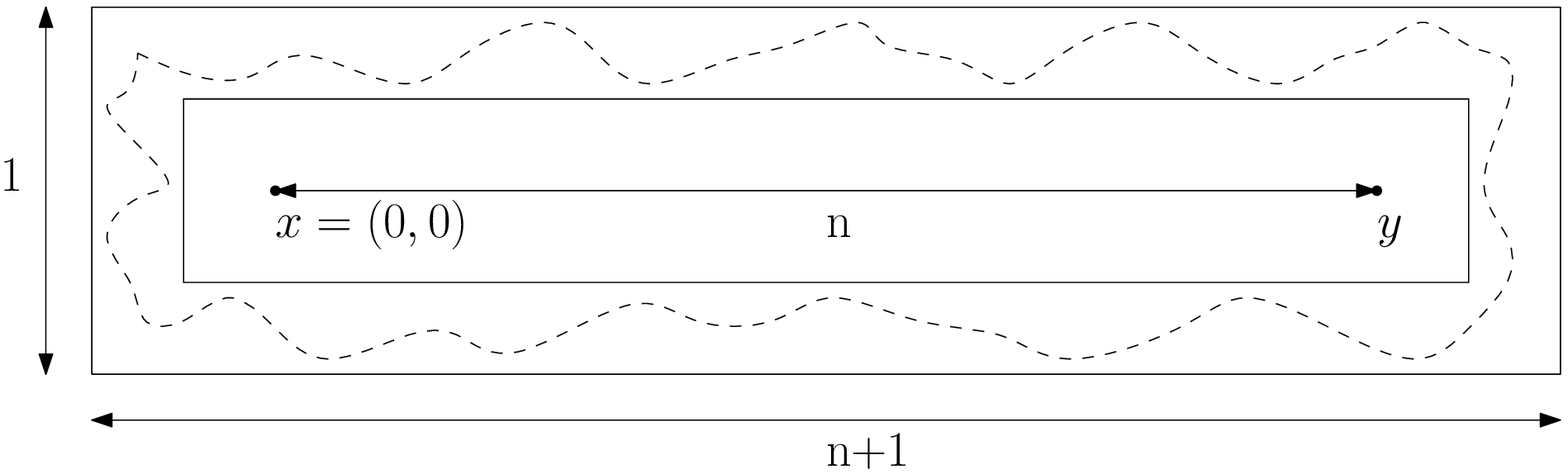}
\caption{The larger box is $R$ and the smaller one is $R_1$. The dashed curve is a blocking circuit.}\label{fig:bc}
\end{center}
\end{figure}
For ease of notation, we will assume $x=0=(0,0)$ and $y=n\vec{e}_1=(n,0)$ for some $n\in\mathbb{N}$ and $(n,0)\in a\mathbb{Z}^2$. Let $R:=[-\frac{1}{2},n+\frac{1}{2}]\times[-\frac{1}{2},\frac{1}{2}]$ and $R_1:=[-\frac{1}{4},n+\frac{1}{4}]\times[-\frac{1}{4},\frac{1}{4}]$. The dual lattice of $a\mathbb{Z}^2$ is $(a/2,a/2)+a\mathbb{Z}^2$. A dual edge is declared open (or blocking) if and only if the corresponding primal edge is closed. We refer to Section 6.1 of \cite{Gri06} for more details about duality. Let $E(R_1,R)$ be the event that there is a blocking circuit in $R\setminus R_1$ surrounding $R_1$, i.e., there is a circuit of dual open edges surrounding $R_1$ in $R^a$. See Figure \ref{fig:bc} for an illustration.
\begin{lemma}\label{lem:blo}
There exists $\epsilon_1>0$ such that for any $a\in(0,\epsilon_1]$ and $h>0$
\[\tilde{\mathbb{P}}^a_h\big(E(R_1,R)\big)\geq e^{-C_9(h)n},\]
where $C_9(h)\in (0,\infty)$ only depends on $h$.
\end{lemma}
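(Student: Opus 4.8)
The plan is to reduce the blocking-circuit estimate to a product of finitely many ``local'' crossing estimates, each of which is bounded below uniformly in $a$ by combining RSW-type bounds at $h=0$ with the comparison Lemma~\ref{lem:pos}. First I would cover the annular region $R\setminus R_1$ by a chain of $O(n)$ overlapping boxes of unit order of magnitude (say of the form $[k/2,(k+3)/2]\times[-1/2,1/2]$ and its reflections/corners), arranged so that, if in each box the \emph{dual} configuration contains a crossing of the short direction (a blocking segment separating the inner box $R_1$ from the outer boundary of $R$ locally), then these segments concatenate into a full blocking circuit surrounding $R_1$. This is the standard construction used to produce circuits in annuli; the number of boxes needed is $cn$ for some absolute constant $c$, plus four fixed corner boxes.

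Next, for each such box $B$ (translated so that it is one of finitely many shapes, e.g.\ $[0,1/2]\times[-1/4,1/4]$ as hinted in Remark~\ref{rem:pos}), the event that there is a dual-open crossing in the hard direction inside $B$ is a decreasing event for the primal FK configuration, equivalently an increasing event for the dual. By duality it corresponds to the \emph{absence} of a primal crossing in the easy direction, and by the RSW theory for the critical FK-Ising model with $q=2$ (e.g.\ via \cite{DCHN11}; this is exactly the input already used for Lemma~\ref{lem1arm}), one has $\mathbb{P}^a_{B,\rho,0}(\text{dual crossing of }B)\geq p_0$ for some $p_0>0$ independent of $a\in(0,1]$ and of the boundary condition $\rho$, for all small enough $a$. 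Applying Lemma~\ref{lem:pos} (in the extended form of Remark~\ref{rem:pos}) converts this into $\tilde{\mathbb{P}}^a_{B,\rho,h}(\text{dual crossing of }B)\geq e^{-C_5(h+h^2)}p_0=:p_1(h)>0$.

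Then I would glue the boxes together using the finite-energy / domain-Markov structure of the FK measure together with the FKG inequality. Since the dual-crossing events are decreasing and the FK measure $\tilde{\mathbb{P}}^a_h$ is positively associated (monotonicity and FKG hold for $q\geq1$, and the magnetic-field tilt in \eqref{eqRN} by $\prod\cosh(ha^{15/8}|\mathcal{C}|)$ is a product of increasing functions of $|\mathcal{C}|$, hence preserves FKG), the probability of the intersection over all $O(n)$ boxes is at least the product of the individual probabilities, each of which is at least $p_1(h)$ after conditioning worst-case on the boundary data. This yields
\begin{equation*}
\tilde{\mathbb{P}}^a_h\big(E(R_1,R)\big)\;\geq\; p_1(h)^{cn+4}\;\geq\; e^{-C_9(h)n}
\end{equation*}
for a suitable $C_9(h)=-c\log p_1(h)+o(1)\in(0,\infty)$ depending only on $h$, valid for all $a\le\epsilon_1$ with $\epsilon_1$ chosen so that the RSW bound holds at that scale.

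The main obstacle I anticipate is not the combinatorial circuit-gluing—which is routine—but making the uniformity in the boundary condition $\rho$ and in $a$ completely clean: one must ensure the RSW lower bound for the hard-direction dual crossing holds with a constant independent of $\rho$ (worst case being free boundary on the primal, i.e.\ wired on the dual, which is the favorable case for the dual crossing, so monotonicity actually helps) and independent of the lattice spacing, and that Lemma~\ref{lem:pos}/Remark~\ref{rem:pos} applies verbatim to the slightly non-square boxes used here. A secondary technical point is handling the conditioning when gluing: rather than invoking FKG on the whole-plane measure $\tilde{\mathbb{P}}^a_h$ directly, it is cleanest to reveal the boxes one at a time, at each step using the domain-Markov property to reduce to an FK measure on the remaining region with some boundary condition, and then apply the box estimate uniformly in that boundary condition; FKG (or just the monotonicity in boundary conditions) then lower-bounds each conditional probability by $p_1(h)$.
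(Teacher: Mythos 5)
Your proposal is correct and follows essentially the same route as the paper: establish a uniform-in-$a$ lower bound for a dual (blocking) crossing of a fixed-size rectangle at $h=0$ via the RSW bounds of \cite{DCHN11}, transfer to $h>0$ with Lemma~\ref{lem:pos}/Remark~\ref{rem:pos} at the cost of a factor $e^{-C_5(h+h^2)}$, and then paste $O(n)$ such crossings using FKG (taking the worst boundary condition) to build the full circuit, giving $p_1(h)^{O(n)}\geq e^{-C_9(h)n}$. The only point worth flagging is your parenthetical justification that the tilt $\prod_{\mathcal{C}}\cosh(ha^{15/8}|\mathcal{C}|)$ preserves FKG because it is a product of increasing functions of $|\mathcal{C}|$; what actually needs checking is that the product is increasing in $\omega$, which holds because merging two clusters replaces $\cosh(hA_1)\cosh(hA_2)$ by $\cosh(h(A_1+A_2))\geq\cosh(hA_1)\cosh(hA_2)$, and tilting an FKG measure by an increasing function preserves FKG — but this is indeed correct (and is also the standard fact that the FK measure with ghost field is positively associated).
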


\begin{proof}
By the self-duality of critical FK percolation with $h=0$ (see Section 6.2 of \cite{Gri06}) and RSW-type bounds (Theorem 1 of \cite{DCHN11}), there exist $\epsilon_1, c_2\in(0,1)$ such that for any $a\in(0,\epsilon_1]$
\[\mathbb{P}^a_{[0,\frac{1}{2}]\times [0,\frac{1}{4}],w,0}\left(~\exists\text{ LR dual-open crossing of }\left[0,\frac{1}{2}\right]\times \left[0,\frac{1}{4}\right]\right)\geq c_2,\]
where we abbreviate `left-right' to `LR', and `top-bottom' to `TB'.
Then Lemma \ref{lem:pos} and Remark \ref{rem:pos} imply that for any $a\in(0,\epsilon_1]$
\begin{equation}\label{eq:blo0}
\tilde{\mathbb{P}}^a_{[0,\frac{1}{2}]\times [0,\frac{1}{4}],w,h}\left(~\exists\text{ LR dual-open crossing of }\left[0,\frac{1}{2}\right]\times \left[0,\frac{1}{4}\right]\right)\geq c_2(h),
\end{equation}
where $c_2(h)\in(0,1)$ only depends on $h$. Similarly, one can show that there is some $\tilde{c}_2(h)\in(0,1)$ such that
\begin{equation}\label{eq:blo1}
\tilde{\mathbb{P}}^a_{[0,\frac{1}{2}]\times [0,\frac{1}{4}],w,h}\left(~\exists\text{ TB dual-open crossing of }\left[\frac{1}{4},\frac{1}{2}\right]\times \left[0,\frac{1}{4}\right]\right)\geq \tilde{c}_2(h).
\end{equation}
Let $F$ be the intersection of the two events in \eqref{eq:blo0} and \eqref{eq:blo1}. Then applying the FKG inequality we get
\begin{equation}\label{eq:blo}
\tilde{\mathbb{P}}^a_{[0,\frac{1}{2}]\times [0,\frac{1}{4}],w,h}(F)\geq c_2(h)\tilde{c}_2(h).
\end{equation}
Note that the wired boundary condition is the worst boundary condition for $F$ to occur. The rest of the proof follows from standard arguments in the percolation literature, i.e., by pasting different crossings defined in $F$ in rotated and/or translated versions of $[0,\frac{1}{2}]\times[0,\frac{1}{4}]$ by using the FKG inequality; see Figure \ref{fig:blocking_cons}.
\end{proof}

\begin{figure}
\begin{center}
\includegraphics[width=0.9\textwidth]{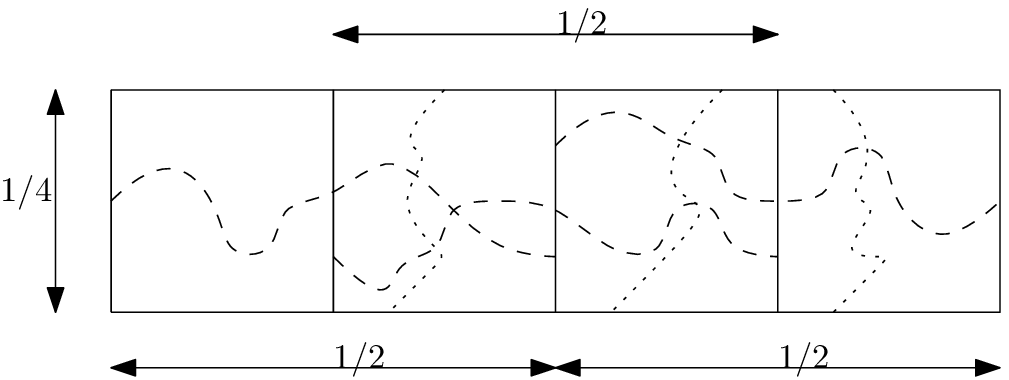}
\caption{Both dashed and dotted curves are dual-open (blocking) paths. There are $3$ overlapping rectangles of size $\frac{1}{2}\times\frac{1}{4}$. If a translated version of $F$ occurs in each of those rectangles (as shown) then there is a long horizontal dual-open crossing in the large rectangle of size $1\times \frac{1}{4}$.}\label{fig:blocking_cons}
\end{center}
\end{figure}

Next, we find a lower bound for the probability of $\{0\longleftrightarrow n\vec{e}_1\}$ under the condition that there is such a blocking circuit.
\begin{lemma}\label{lem:xy}
There exists $\epsilon_2>0$ such that for any $a\in(0,\epsilon_2]$ and $h>0$
\[\tilde{\mathbb{P}}^a_h\big(0\longleftrightarrow n\vec{e}_1\big|E(R_1,R)\big)\geq \tilde{C}_1^2 a^{1/4} e^{-C_6n},\]
where $\tilde{C}_1$ is as in Lemma \ref{lem1arm} and $C_{6}\in (0,\infty)$.
\end{lemma}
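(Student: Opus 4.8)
The plan is to condition on a realization of the blocking circuit and then work inside the innermost region $R_1$ (or rather a slightly enlarged region bounded by the circuit) where, by the domain Markov property, the FK measure $\tilde{\mathbb{P}}^a_h(\cdot\,|\,E(R_1,R))$ restricted to the interior is an FK measure with a \emph{free} boundary condition on the inner boundary of the blocking circuit (since the dual-open circuit forces the bounding primal edges to be closed). This free interior domain contains $R_1$, and hence contains the two unit squares $Q + 0$ and $Q + n\vec{e}_1$ centered at the two endpoints $0$ and $n\vec{e}_1$. The worst case for the connection event is the free boundary condition, and by Lemma~\ref{lem:pos} (in the form of Remark~\ref{rem:pos}, applied to a long thin rectangle) we may pass from $h>0$ back to $h=0$ at the cost of a factor $e^{-C_5(h+h^2)}$ per bounded region used, so it suffices to prove the analogous bound for $\mathbb{P}^a_{\cdot,f,0}$.

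First I would reduce to a one-arm-times-chain estimate. Inside $R_1$ (with free boundary), use the FKG inequality to intersect three events: (i) $0 \longleftrightarrow \partial_{in}(Q^a)$, a one-arm event in the unit box around $0$; (ii) $n\vec{e}_1 \longleftrightarrow \partial_{in}((n\vec{e}_1+Q)^a)$, the analogous one-arm event around $n\vec{e}_1$; and (iii) an LR crossing of the long thin rectangle $R_1$ connecting the box around $0$ to the box around $n\vec{e}_1$. Event (iii) is produced in the standard way by pasting together $O(n)$ overlapping LR crossings of rectangles of fixed aspect ratio (e.g. $\frac{1}{2}\times\frac{1}{4}$), each of which has probability bounded below by a constant $c>0$ uniformly in $a\in(0,\epsilon_2]$ by RSW (Theorem~1 of \cite{DCHN11}); by FKG this gives a lower bound $c^{Cn} = e^{-C_6 n}$. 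Events (i) and (ii) each have probability at least $\tilde{C}_1 a^{1/8}$ by Lemma~\ref{lem1arm}. Combining via FKG, and then pulling back through Lemma~\ref{lem:pos}/Remark~\ref{rem:pos} to reinstate the external field (absorbing the $h$-dependent constants into $C_6 = C_6(h)$ or keeping them as a fixed prefactor depending on $h$), yields
\[
\tilde{\mathbb{P}}^a_h\big(0\longleftrightarrow n\vec{e}_1\,\big|\,E(R_1,R)\big) \geq \tilde{C}_1^2\, a^{1/4}\, e^{-C_6 n},
\]
as desired; the $a^{1/4} = (a^{1/8})^2$ comes from the two one-arm factors and is exactly the exponent in the claim.

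One technical point that needs care: the crossings in (i), (ii), (iii) live in overlapping regions, and the one-arm events in Lemma~\ref{lem1arm} are stated for a box with \emph{arbitrary} boundary condition, so we must make sure we are applying FKG to increasing events all defined with respect to the same measure $\mathbb{P}^a_{R_1,f,0}$ (or its $h$-tilt). This is fine because "connected" events are increasing and FKG holds for the FK measure with $q=2$; one should also note that restricting a box's boundary condition to be whatever is induced by the ambient configuration only helps (wired is worst for connection events, and here the induced boundary condition is dominated appropriately), or simply invoke the lower bound of Lemma~\ref{lem1arm} which is uniform over boundary conditions. The main obstacle, and the only genuinely delicate step, is organizing the geometry so that the constants are uniform in $a$ and in $n$ simultaneously: one must fix the rectangle aspect ratios and the number of pasted rectangles to be linear in $n$ with $a$-independent RSW inputs, and verify that the finitely many bounded-region applications of Lemma~\ref{lem:pos} do not accumulate an $n$-dependent or $a$-dependent constant. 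Since each unit-box one-arm estimate and each RSW crossing estimate is uniform for $a\in(0,\epsilon_2]$, and the number of distinct \emph{types} of region used is bounded, this works out with $\epsilon_2$ chosen as in the RSW statement (Lemma~\ref{lem:blo}) and $C_6\in(0,\infty)$ depending only on $h$ through the per-region factor $e^{-C_5(h+h^2)}$.
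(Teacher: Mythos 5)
Your proposal follows essentially the same route as the paper's proof: reduce to $\tilde{\mathbb{P}}^a_{R_1,f,h}$ via the observation that the blocking circuit forces a free boundary on its interior, then build the connection from two one-arm events (giving the $a^{1/4}=(a^{1/8})^2$) and a chain of $O(n)$ RSW crossings of fixed-aspect-ratio rectangles (giving $e^{-C_6 n}$), glued by FKG.

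Two small remarks. First, where you invoke Lemma~\ref{lem:pos}/Remark~\ref{rem:pos} to pass from $h>0$ to $h=0$ at a cost of $e^{-C_5(h+h^2)}$ per region, the paper takes a slightly cleaner route: the Radon--Nikodym derivative $\prod_{\mathcal{C}}\cosh(ha^{15/8}|\mathcal{C}|)$ in Proposition~\ref{propFKcon} is an increasing function of $\omega$ (merging two clusters of sizes $c_1,c_2\geq 0$ replaces $\cosh c_1\cosh c_2$ by $\cosh(c_1+c_2)\geq\cosh c_1\cosh c_2$), so by FKG each increasing building-block probability under $\tilde{\mathbb{P}}^a_{\cdot,f,h}$ is at least its value under $\mathbb{P}^a_{\cdot,f,0}$ with no loss; this makes the resulting $C_6$ independent of $h$, whereas your version picks up $h$-dependence in $C_6$. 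Both are acceptable for the downstream use in the proof of Theorem~\ref{thm:upper}, but the paper's is slightly sharper. Second, your parenthetical ``wired is worst for connection events'' has the monotonicity reversed: wired boundary conditions \emph{increase} the probability of connection events, so free is the worst case --- which is exactly what you need and is what Lemma~\ref{lem1arm}'s boundary-condition-uniform lower bound delivers, so the slip does not affect your argument's validity.
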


\begin{proof}
By the FKG inequality, the probability in the lemma is larger than or equal to
\[\tilde{\mathbb{P}}^a_{R_1,f,h}(0\longleftrightarrow n\vec{e}_1).\]
By the FKG inequality and RSW-type bounds (Theorem 1 of \cite{DCHN11}), there exist $\epsilon_2, c_3\in(0,1)$ such that for any $a\in(0,\epsilon_2]$
\begin{align}
&\tilde{\mathbb{P}}^a_{[0,\frac{1}{4}]\times [0,\frac{1}{2}],f,h}\left(~\exists\text{ TB open crossing of }\left[0,\frac{1}{4}\right]\times \left[0,\frac{1}{2}\right]\right)\nonumber\\
&\geq \mathbb{P}^a_{[0,\frac{1}{4}]\times [0,\frac{1}{2}],f,0}\left(~\exists\text{ TB open crossing of }\left[0,\frac{1}{4}\right]\times \left[0,\frac{1}{2}\right]\right)\nonumber\\
&\geq c_3>0.
\end{align}
Let $F_1$ be the event that there is a LR open crossing of $[0,\frac{1}{2}]\times[0,\frac{1}{2}]$ and a TB crossing of $[\frac{1}{4},\frac{1}{2}]\times [0,\frac{1}{2}]$. Then the FKG inequality implies
\begin{equation}
\tilde{\mathbb{P}}^a_{[0,\frac{1}{2}]^2,f,h}(F_1)\geq (c_3)^2.
\end{equation}
Lemma \ref{lem1arm} and the FKG inequality imply
\[\tilde{\mathbb{P}}^a_{[-\frac{1}{4},\frac{1}{4}]^2,f,h}\left(0\longleftrightarrow \partial_{in}\left(\left[-\frac{1}{4},\frac{1}{4}\right]^2\right)^a\right)\geq \tilde{C}_1 a^{1/8}.\]
By symmetry and the union bound, we have
\begin{equation}
\tilde{\mathbb{P}}^a_{[-\frac{1}{4},\frac{1}{4}]^2,f,h}\left(0\longleftrightarrow \text{the right side of }\left(\left[-\frac{1}{4},\frac{1}{4}\right]^2\right)^a\right)\geq \tilde{C}_1 a^{1/8}/4.
\end{equation}
Let $F_2$ be the event that there is  a TB open crossing of $[0,\frac{1}{4}]\times[-\frac{1}{4},\frac{1}{4}]$ and $0$ is connected to the right side of $\left([-\frac{1}{4},\frac{1}{4}]^2\right)^a$ by an open path within $\left([-\frac{1}{4},\frac{1}{4}]^2\right)^a$. Then the FKG inequality implies
\begin{equation}
\tilde{\mathbb{P}}^a_{[-\frac{1}{4},\frac{1}{4}]^2,f,h}(F_2)\geq c_3\tilde{C}_1 a^{1/8}/4.
\end{equation}
The rest of the proof follows from standard arguments in the percolation literature by considering the intersection of $F_2$, translates of $F_1$ (one for each of the overlapping squares covering $R_1$ as depicted in Figure \ref{fig:Long_path}) and the event $F_3:=\{n\vec{e}_1\longleftrightarrow \text{ the left side of its square}\}$~; see Figure \ref{fig:Long_path}.
\end{proof}

\begin{figure}
\begin{center}
\includegraphics[width=.95\textwidth]{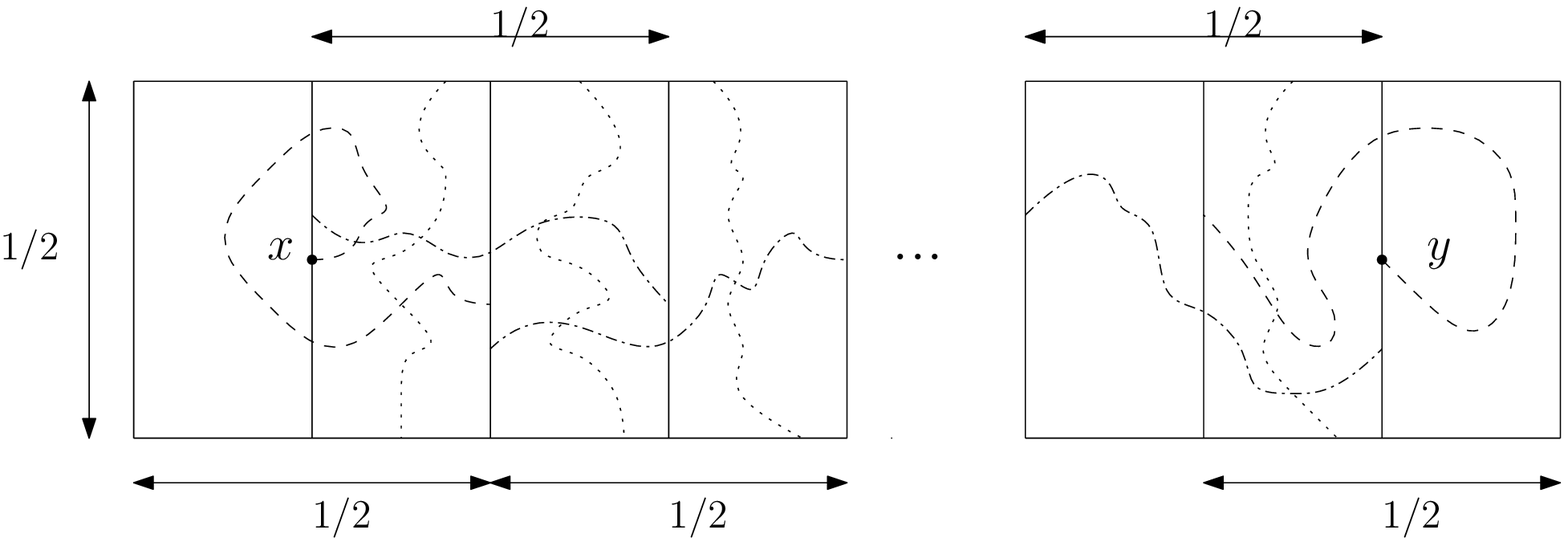}
\caption{All dashed, dotted and dash-dotted curves are open paths. There are $3$ overlapping squares of size $\frac{1}{2}\times\frac{1}{2}$ on the left and $2$ on the right. In this configuration, $F_2$ occurs in the first (leftmost) square and translated versions of $F_1$ occur in all other squares up to 
the rightmost square where $F_3$ occurs.}\label{fig:Long_path}
\end{center}
\end{figure}

Our last lemma says that, conditioned on there being a blocking circuit, the cluster of $x=0$ (denoted by $\mathcal{C}(0)$) is exponentially unlikely to be large.
\begin{lemma}\label{lem:lcd}
For each $h>0$, there exists $K(h)\in (0,\infty)$ such that for any $a\in (0,\epsilon_1]$
\begin{align}\label{eq:lcd}
\tilde{\mathbb{P}}^a_h\big(0\longleftrightarrow n\vec{e}_1,|\mathcal{C}(0)|\geq a^{-15/8}nK(h)\big|E(R_1,R)\big)
\leq \tilde{C}_1^2 a^{1/4} e^{-2C_6n}
\end{align}
where $\epsilon_1$ is as in Lemma \ref{lem:blo}, $\tilde{C}_1$ and $C_6$ are as in Lemma \ref{lem:xy}.
\end{lemma}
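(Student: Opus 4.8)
\emph{Overview of the plan.} I would first reduce, by conditioning on the outermost blocking circuit together with a monotonicity argument, to a bound in the \emph{fixed} finite box $R$ with \emph{free} boundary conditions; then use the Radon--Nikodym formula of Proposition~\ref{propFKcon} to convert the lower bound on the area of $\mathcal{C}(0)$ into an explicit exponential weight multiplied by a connection probability at a \emph{shifted} magnetic field; and finally bound those two factors separately, the whole exponential decay in $n$ being produced by the area constraint. For the reduction, condition on $E(R_1,R)$ and then on the outermost blocking dual circuit $\gamma$ in $R\setminus R_1$ surrounding $R_1$. Since $\gamma$ is a circuit of closed primal edges surrounding $R_1\ni 0,n\vec e_1$, the cluster $\mathcal{C}(0)$ lies in the region $\mathrm{int}(\gamma)\subseteq R$ enclosed by $\gamma$, and since the internal-edge marginal \eqref{eq:FKm} factorizes across $\gamma$, conditionally on $\gamma$ the edge configuration inside $\gamma$ has law $\tilde{\mathbb{P}}^a_{\mathrm{int}(\gamma),f,h}$. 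The event in the lemma is increasing and depends only on edges inside $\gamma$, so by the domain-comparison inequality for the FK measure with field (which has positive association, and for which enlarging a free domain is stochastically increasing), uniformly in $\gamma$,
\[
\tilde{\mathbb{P}}^a_h\big(0\longleftrightarrow n\vec e_1,\;|\mathcal{C}(0)|\ge a^{-15/8}nK(h)\,\big|\,E(R_1,R)\big)\ \le\ \tilde{\mathbb{P}}^a_{R,f,h}\big(0\longleftrightarrow n\vec e_1,\;|\mathcal{C}(0)|\ge a^{-15/8}nK(h)\big).
\]

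\emph{Trading the area for a shifted field.} Write $A(\mathcal{C}):=a^{15/8}|\mathcal{C}|$, so the constraint is $A(\mathcal{C}(0))\ge nK(h)$. Passing to $\mathbb{P}^a_{R,f,0}$ via Proposition~\ref{propFKcon}, bounding the normalizing expectation below by $1$ (Proposition~\ref{prop:mgf}), using the exponential Markov bound $1_{\{A(\mathcal{C}(0))\ge nK(h)\}}\le e^{\lambda(A(\mathcal{C}(0))-nK(h))}$ for $\lambda>0$, and the elementary inequalities $e^{\lambda r}\cosh(hr)\le 2\cosh((\lambda+h)r)$ and $\cosh(hr)\le\cosh((\lambda+h)r)$ for $r\ge0$, one gets with $s:=\lambda+h$
\[
\tilde{\mathbb{P}}^a_{R,f,h}\big(0\longleftrightarrow n\vec e_1,\;A(\mathcal{C}(0))\ge nK(h)\big)\ \le\ 2e^{-\lambda nK(h)}\;\mathbb{E}^a_{R,f,0}\Big[1_{\{0\longleftrightarrow n\vec e_1\}}\prod_{\mathcal{C}}\cosh\big(sA(\mathcal{C})\big)\Big].
\]
Applying Proposition~\ref{propFKcon} in the reverse direction and then Proposition~\ref{prop:mgf} once more, the right-hand side equals $2e^{-\lambda nK(h)}\,E^a_{R,f,0}\!\big(e^{sm^a_R}\big)\,\tilde{\mathbb{P}}^a_{R,f,s}(0\longleftrightarrow n\vec e_1)$.

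\emph{The two factors.} For the magnetization factor, by the FKG inequality $E^a_{R,f,0}(e^{sm^a_R})\le E^a_{R,+,0}(e^{sm^a_R})$. Partition $R^a$ into $n+1$ unit squares $Q_0,\dots,Q_n$ so that $m^a_R=\sum_{j}m^a_{Q_j}$, condition successively on $\sigma|_{Q_0^a},\sigma|_{Q_1^a},\dots$, and use at each step that any spin configuration is $\le$ the $+$ one (monotonicity of increasing functions in the boundary condition), pulling out a factor $E^a_{Q_j,+,0}(e^{sm^a_{Q_j}})\le e^{C_5(s+s^2)}$ by Lemma~\ref{lem:mgf}; this gives $E^a_{R,+,0}(e^{sm^a_R})\le e^{(n+1)C_5(s+s^2)}$, uniformly in small $a$. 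For the connection factor, because the free box $R$ provides no boundary shortcut, $\{0\longleftrightarrow n\vec e_1\}$ forces both $\{0\longleftrightarrow\partial_{in}(Q^0)^a\}$ and $\{n\vec e_1\longleftrightarrow\partial_{in}(Q^n)^a\}$ for small disjoint squares $Q^0,Q^n$ around the two endpoints; conditioning on the configuration inside $Q^0$ (which already decides the first event) and using monotonicity to dominate the conditional law on $Q^n$ by a wired box, each event contributes $O(a^{1/8})$ by the one-arm bound of Proposition~\ref{prop:onearm} (equivalently Lemma~\ref{lem1arm}), so $\tilde{\mathbb{P}}^a_{R,f,s}(0\longleftrightarrow n\vec e_1)\le C(s)a^{1/4}$ with $C(s)$ independent of $n$.

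\emph{Conclusion and the main difficulty.} Combining the last three displays, for $n\ge1$,
\[
\tilde{\mathbb{P}}^a_{R,f,h}\big(0\longleftrightarrow n\vec e_1,\;A(\mathcal{C}(0))\ge nK(h)\big)\ \le\ 2C(s)\,a^{1/4}\,e^{(n+1)C_5(s+s^2)-\lambda nK(h)},
\]
so fixing $\lambda=1$ (hence $s=1+h$) and choosing $K(h)$ large enough that $\lambda K(h)\ge 2C_5\big((1+h)+(1+h)^2\big)+2C_6+\log\!\big(1+2C(1+h)/\tilde C_1^2\big)+1$ makes the exponent at most $-(2C_6+1)n$, which yields $\le \tilde C_1^2 a^{1/4}e^{-2C_6 n}$ for every $n\ge1$, i.e.\ \eqref{eq:lcd}. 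The main obstacle is really the architecture: one must realize that after conditioning on the blocking circuit the relevant domain must be enlarged to $R$ with \emph{free} (not wired) boundary conditions, that the decay $e^{-2C_6 n}$ has to come from the area constraint rather than from any spatial decay of the connection probability (which in the presence of a magnetic field need not decay at all), and that the magnetization moment generating function of the long thin strip $R$ grows only \emph{linearly} in $n$ --- the last point being the one genuine technical step, handled by the successive-conditioning argument above which replaces a sum of positively correlated block magnetizations by a product.
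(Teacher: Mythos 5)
Your proof is correct but takes a genuinely different route from the paper's. The paper bounds the conditional probability by the \emph{wired} measure on $R$, conditions on $\{\partial_{in}Q^a(i)\text{ open for all }i\}$ to make the $n+1$ unit boxes independent, splits the area constraint three ways by a union bound, and then invokes Propositions~\ref{prop:mgfbdh}, \ref{prop:onearm}, and --- crucially --- the conditioned moment generating function bound Proposition~\ref{prop:mgfcbd}. You instead reduce to the \emph{free} measure $\tilde{\mathbb{P}}^a_{R,f,h}$ by conditioning on the outermost blocking circuit and using domain monotonicity, then apply Proposition~\ref{propFKcon}, an exponential Markov bound for the area, and Proposition~\ref{propFKcon} in reverse to factor the estimate as $2e^{-\lambda n K}\,E^a_{R,f,0}(e^{s m^a_R})\,\tilde{\mathbb{P}}^a_{R,f,s}(0\leftrightarrow n\vec e_1)$ with $s=\lambda+h$, thereby bypassing both the three-way union bound and Proposition~\ref{prop:mgfcbd}. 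The genuinely new ingredient you supply --- proved correctly by successive conditioning on unit blocks together with monotonicity of $+$-boundary conditions and Lemma~\ref{lem:mgf} --- is the linear-in-$n$ strip bound $E^a_{R,+,0}(e^{s m^a_R})\le e^{(n+1)C_5(s+s^2)}$. One small point to make explicit: your one-arm factor applies Proposition~\ref{prop:onearm} to the boundary condition induced on $Q^n$ by conditioning on the configuration in $Q^0$, which may include ghost connections (i.e.\ $\bar w$ rather than $w$); this is harmless because the proof of Proposition~\ref{prop:onearm} in fact bounds $\mathbb{P}^a_{Q,\bar w,h}\big(0\overset{a\mathbb{Z}^2}{\longleftrightarrow}\partial_{in}Q^a\big)$, which dominates the $w$ version. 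Both proofs extract the $e^{-2C_6 n}$ rate by taking $K(h)$ large; yours is arguably more streamlined at the cost of having to establish the growth rate of the strip moment generating function directly.
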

\begin{remark}
In Lemma \ref{lem:lcd}, $K(h)$ was chosen so that the exponential decay constant in \eqref{eq:lcd} is $2C_6$. What really matters in the proof of Theorem \ref{thm:upper} is that the rate strictly exceeds the rate $C_6$ of Lemma \ref{lem:xy}. In fact by choosing $K(h)$ large, the rate in \eqref{eq:lcd} can be made arbitrarily large.
\end{remark}
\begin{proof}
We choose $a\in(0,\epsilon_1]$ such that the probability of the conditioning event in \eqref{eq:lcd} is positive by Lemma \ref{lem:blo}.
Let $Q^a(i):=\left(Q+(i,0)\right)^a$ for $i\in\mathbb{N}$; $K(h)>0$ will be chosen later.
By the FKG inequality, the LHS of \eqref{eq:lcd} is bounded above by
\begin{align}\label{eq:lcd1}
&\tilde{\mathbb{P}}^a_{R,w,h}\big(0\longleftrightarrow n\vec{e}_1,|\mathcal{C}(0)|\geq a^{-15/8}nK(h)\big)\nonumber\\
&\leq \tilde{\mathbb{P}}^a_{R,w,h}\big(0\longleftrightarrow n\vec{e}_1,|\mathcal{C}(0)|\geq a^{-15/8}nK(h)\big|\partial_{in} Q^a(i) \text{ is open } \forall\, 0\leq i\leq n \big),
\end{align}
where $\{\partial_{in} Q^a(i)\text { is open}\}$ means that each nearest neighbor edge between two vertices in $\partial_{in} Q^a(i)$ is open. When $\partial_{in} Q^a(i)$ is open, let $A_0^i$ be the renormalized area of the boundary cluster of $Q^a(i)$. Then the RHS of \eqref{eq:lcd1} is less than or equal to (in what follows, $\{\partial Q^a\text { open}\}$ is an abbreviation of $\{\partial_{in} Q^a(i)\text { is open } \forall\, 0\leq i\leq n\}$)
\begin{align}\label{eq:lcd2}
&\tilde{\mathbb{P}}^a_{R,w,h}\big(0\longleftrightarrow \partial_{in}Q^a(0), n\vec{e}_1\longleftrightarrow \partial_{in}Q^a(n),\sum_{i=0}^n A_0^i\geq nK(h)\big| \partial Q^a \text{ open}\big)\nonumber\\
&\leq \tilde{\mathbb{P}}^a_{R,w,h}\big(0\longleftrightarrow \partial_{in}Q^a(0), n\vec{e}_1\longleftrightarrow \partial_{in}Q^a(n), A_0^0\geq n K(h)/3\big| \partial Q^a \text{ open}\big)\nonumber\\
& +\tilde{\mathbb{P}}^a_{R,w,h}\big(0\longleftrightarrow \partial_{in}Q^a(0), n\vec{e}_1\longleftrightarrow \partial_{in}Q^a(n), A_0^n\geq n K(h)/3\big| \partial Q^a \text{ open} \big)\nonumber\\
& +\tilde{\mathbb{P}}^a_{R,w,h}\big(0\longleftrightarrow \partial_{in}Q^a(0), n\vec{e}_1\longleftrightarrow \partial_{in}Q^a(n), \sum_{i=1}^{n-1} A_0^i\geq \frac{n K(h)}{3}\big| \partial Q^a \text{ open}\big)\nonumber\\
&=\Big[2\tilde{\mathbb{P}}^a_{Q,w,h}\big( A_0^0\geq n K(h)/3\big|0\longleftrightarrow \partial_{in}Q^a(0)\big)+\text{Prob}\big(\sum_{i=1}^{n-1} W^i\geq n K(h)/3\big)\Big]\nonumber\\
&\quad \times \Big[\tilde{\mathbb{P}}^a_{Q,w,h}\big(0\longleftrightarrow \partial_{in}Q^a(0)\big)\Big]^2
\end{align}
where $W_1,\ldots,W_{n-1}$ are i.i.d. random variables distributed like $A_0^0$ from $\tilde{\mathbb{P}}^a_{Q,w,h}$. Applying Propositions \ref{prop:mgfbdh}, \ref{prop:onearm} and \ref{prop:mgfcbd} and using an exponential Chebyshev inequality, we obtain that \eqref{eq:lcd2} is less than or equal to
\begin{align}\label{eq:lcd3}
&\Big[2 C_8(2h)e^{C_5\big(h+h^2+(2h)+(2h)^2\big)}e^{-nhK(h)/3}+e^{\left[\ln 2+C_5\big((2h)^2+2h\big)\right](n-1)}e^{-nhK(h)/3}\Big]\nonumber\\
&\times \Big[C_7(h)a^{1/8}\Big]^2.
\end{align}
From \eqref{eq:lcd3} one can choose $K(h)$ so large that $hK(h)/3-\left[\ln2+C_5(4h^2+2h)\right]>2C_6$ so that the lemma holds.
\end{proof}

We are ready to prove Theorem \ref{thm:upper}.
\begin{proof}[Proof of Theorem \ref{thm:upper}]
As mentioned earlier in this section, we will set $x=(0,0)$ and $y=(n,0)$. By Lemma \ref{lemES},
\begin{align*}
\langle\sigma_x;\sigma_y\rangle_{a,h}&=\mathbb{P}_h^a(x\longleftrightarrow y)-\mathbb{P}_h^a(x\longleftrightarrow g)\mathbb{P}_h^a(y\longleftrightarrow g)\\
&=\mathbb{P}_h^a(x\longleftrightarrow g, y\longleftrightarrow g)-\mathbb{P}_h^a(x\longleftrightarrow g)\mathbb{P}_h^a(y\longleftrightarrow g)\\
&\quad+\mathbb{P}_h^a(x\centernot \longleftrightarrow g, x\longleftrightarrow y).
\end{align*}
By the FKG inequality,
\[\mathbb{P}_h^a(x\longleftrightarrow g, y\longleftrightarrow g)-\mathbb{P}_h^a(x\longleftrightarrow g)\mathbb{P}_h^a(y\longleftrightarrow g)\geq0.\]
Therefore,
\begin{align}\label{eq:thmup1}
\langle\sigma_x;\sigma_y\rangle_{a,h}&\geq \mathbb{P}_h^a(x\centernot \longleftrightarrow g, x\longleftrightarrow y)\nonumber\\
&\geq \mathbb{P}_h^a\big(x\centernot \longleftrightarrow g, x\overset{a\mathbb{Z}^2}{\longleftrightarrow} y, |\mathcal{C}^{a\mathbb{Z}^2}(x)|<a^{-15/8}|x-y|K(h)\big),
\end{align}
where $\mathcal{C}^{a\mathbb{Z}^2}(x)$ is the cluster of $x$ on $a\mathbb{Z}^2$ (that is, omitting all external edges) and $K(h)$ is the same as in Lemma \ref{lem:lcd}. Let $L>0$ satisfy $(L/a)>a^{-15/8}|x-y|K(h)$. Then the event in \eqref{eq:thmup1} only depends on the status of edges in $\mathscr{B}(\Lambda_L^a)\cup\mathscr{E}(\Lambda_L^a)$.

Note that, because of the DLR property/domain Markov property, one can group boundary conditions into a finite number of sets such that two boundary conditions in the same set induce the same Gibbs measure in $\Lambda^a_L$. By summing over all such sets of boundary conditions, we see that the RHS of \eqref{eq:thmup1} is equal to
\begin{align}\label{eq:thmup2}
&\sum_{\tilde{\rho}}\mathbb{P}^a_{\Lambda_L,\tilde{\rho},h}\big(x\centernot \longleftrightarrow g, x\overset{a\mathbb{Z}^2}{\longleftrightarrow} y, |\mathcal{C}^{a\mathbb{Z}^2}(x)|<a^{-15/8}|x-y|K(h)\big)\mathbb{P}^a_h(\tilde{\rho})\nonumber\\
&=\sum_{\tilde{\rho}}\mathbb{P}^a_{\Lambda_L,\tilde{\rho},h}\big(x\centernot \longleftrightarrow g\big| x\overset{a\mathbb{Z}^2}{\longleftrightarrow} y, |\mathcal{C}^{a\mathbb{Z}^2}(x)|<a^{-15/8}|x-y|K(h)\big)\nonumber\\
&\quad\times\mathbb{P}^a_{\Lambda_L,\tilde{\rho},h}\big(x\overset{a\mathbb{Z}^2}{\longleftrightarrow} y, |\mathcal{C}^{a\mathbb{Z}^2}(x)|<a^{-15/8}|x-y|K(h)\big)\mathbb{P}^a_h(\tilde{\rho})\nonumber\\
&\geq \sum_{\tilde{\rho}}e^{-2h|x-y|K(h)}\mathbb{P}^a_{\Lambda_L,\tilde{\rho},h}\big(x\overset{a\mathbb{Z}^2}{\longleftrightarrow} y, |\mathcal{C}^{a\mathbb{Z}^2}(x)|<a^{-15/8}|x-y|K(h)\big)\mathbb{P}^a_h(\tilde{\rho})\nonumber\\
&= e^{-2h|x-y|K(h)}\times\mathbb{P}_h^a\big(x\overset{a\mathbb{Z}^2}{\longleftrightarrow} y, |\mathcal{C}^{a\mathbb{Z}^2}(x)|<a^{-15/8}|x-y|K(h)\big),
\end{align}
where the inequality follows from \eqref{eqtanh} in Proposition \ref{prop:tanh} and the elementary inequality $1-\tanh(r)\geq e^{-2r}$ when $r>0$.

For any $a\in \big(0,\min\{\epsilon_1,\epsilon_2\}\big]$ with $\epsilon_1,\epsilon_2$ given in Lemmas \ref{lem:blo} and \ref{lem:xy}, and $x,y\in\mathbb{R}^2$ with $|x-y|\geq 1$, we have by Lemmas \ref{lem:blo}, \ref{lem:xy} and \ref{lem:lcd} that
\begin{align}\label{eq:thmup3}
&\quad \mathbb{P}_h^a\big(x\overset{a\mathbb{Z}^2}{\longleftrightarrow} y, |\mathcal{C}^{a\mathbb{Z}^2}(x)|<a^{-15/8}|x-y|K(h)\big)\nonumber\\
&=\tilde{\mathbb{P}}_h^a\big(x\longleftrightarrow y, |\mathcal{C}^{a\mathbb{Z}^2}(x)|<a^{-15/8}|x-y|K(h)\big)\nonumber\\
&\geq \tilde{\mathbb{P}}_h^a\big(x\longleftrightarrow y, |\mathcal{C}^{a\mathbb{Z}^2}(x)|<a^{-15/8}|x-y|K(h), E(R_1,R)\big)\nonumber\\
&= \tilde{\mathbb{P}}_h^a\big(x\longleftrightarrow y, E(R_1,R)\big)\nonumber\\
&\quad-\tilde{\mathbb{P}}_h^a\big(x\longleftrightarrow y, |\mathcal{C}^{a\mathbb{Z}^2}(x)|\geq a^{-15/8}|x-y|K(h), E(R_1,R)\big)\nonumber\\
&=\tilde{\mathbb{P}}_h^a\big(E(R_1,R)\big)\big[\tilde{\mathbb{P}}_h^a\big(x\longleftrightarrow y\big|E(R_1,R)\big)\nonumber\\
&\quad -\tilde{\mathbb{P}}_h^a\big(x\longleftrightarrow y, |\mathcal{C}^{a\mathbb{Z}^2}(x)|\geq a^{-15/8}|x-y|K(h)\big| E(R_1,R)\big)\big]\nonumber\\
&\geq e^{-C_9(h)|x-y|}[\tilde{C}_1^2a^{1/4}e^{-C_6|x-y|}-\tilde{C}_1^2a^{1/4}e^{-2C_6|x-y|}].
\end{align}
Combining \eqref{eq:thmup1}, \eqref{eq:thmup2} and \eqref{eq:thmup3}, we have for any $a\in\big(0,\min\{\epsilon_1,\epsilon_2\}\big]$ that
\begin{equation}\label{eq:thmup40}
\langle\sigma_x;\sigma_y\rangle_{a,h}\geq C_{10}(h)a^{1/4}e^{-C_{11}(h)|x-y|} \text{ for any } x, y \in a\mathbb{Z}^2 \text{ with }|x-y|\geq 1,
\end{equation}
where $C_{10}(h), C_{11}(h)\in (0,\infty)$ only depend on $h$. Equation \eqref{eq:thmup40} implies (by rescaling the lattice spacing by $1/\min\{\epsilon_1,\epsilon_2\}$) that for any $a\in(0,1]$
\begin{equation}\label{eq:thmup4}
\langle\sigma_x;\sigma_y\rangle_{a,h}\geq C_{12}(h)a^{1/4}e^{-C_{13}(h)|x-y|} \text{ for any } x, y \in a\mathbb{Z}^2 \text{ with }|x-y|\geq C_2,
\end{equation}
where $C_{12}(h), C_{13}(h)\in (0,\infty)$ only depend on $h$ and $C_2=1/\min\{\epsilon_1,\epsilon_2\}$.

Now letting $a=H^{8/15}\in(0,1]$ and $h=1$ in \eqref{eq:thmup4}, we have
\begin{equation}\label{eq:thmup5}
\langle\sigma_x;\sigma_y\rangle_{H^{8/15},1}\geq C_{12}(1)H^{2/15}e^{-C_{13}(1)|x-y|}, \,x, y \in H^{8/15}\mathbb{Z}^2 \text{ with }|x-y|\geq C_2.
\end{equation}
Rewriting \eqref{eq:thmup5} on the $\mathbb{Z}^2$ lattice, we have (setting $x^{\prime}=xH^{-8/15}$ and $y^{\prime}=yH^{-8/15}$) that for any  $x^{\prime}, y^{\prime} \in \mathbb{Z}^2$ with $|x^{\prime}-y^{\prime}|\geq C_2H^{-8/15}$
\begin{equation}\label{eq:thmup6}
\langle\sigma_{x^{\prime}};\sigma_{y^{\prime}}\rangle_{1,H}\geq C_{12}(1)H^{2/15}e^{-C_{13}(1)H^{8/15}|x^{\prime}-y^{\prime}|}.
\end{equation}
This completes the proof of \eqref{eq:up2}. Then \eqref{eq:up1} follows by rewriting \eqref{eq:thmup6} on the $a\mathbb{Z}^2$ lattice with external field $a^{15/8}h$.
\end{proof}

\section{Proofs of Theorems \ref{thm2} and \ref{thm3}}\label{sec:proof}
In this section, we prove Theorems \ref{thm2} and \ref{thm3}. We first prove the following ancillary proposition.
\begin{proposition}\label{prop:mgfcon}
Suppose $D$ is a simply-connected and bounded domain in $\mathbb{R}^2$ with piecewise smooth boundary. Then for any $\tilde{f}\in C^{\infty}(\bar{D})$,
\begin{equation}
E^0_{D,f,0}(e^{\Phi^0_{D}(\tilde{f})})=\mathbb{E}^0_{D,f,0}\left(\prod_{C\in\mathscr{C}(D,f,\cdot)}\cosh\left(\mu^0_{\mathcal{C}}(\tilde{f})\right)\right)<\infty.
\end{equation}
\end{proposition}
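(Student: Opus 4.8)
The plan is to obtain the identity by passing to the $a\downarrow 0$ limit in the corresponding lattice identity, which is an instance of the Edwards--Sokal coupling already used in Proposition~\ref{prop:mgf}. On the lattice, for $\tilde f\in C^\infty(\bar D)$ one has $\Phi^{a,0}_D(\tilde f)=a^{15/8}\sum_{x\in D^a}\sigma_x\tilde f(x)=\sum_{\mathcal C\in\mathscr C(D^a,f,\cdot)}\sigma(\mathcal C)\,\mu^a_{\mathcal C}(\tilde f)$, where $\sigma(\mathcal C)\in\{\pm1\}$ is the common spin of the cluster $\mathcal C$ under the Edwards--Sokal coupling and $\mu^a_{\mathcal C}(\tilde f)=a^{15/8}\sum_{x\in\mathcal C}\tilde f(x)$. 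Conditioning on the FK configuration $\omega$ and averaging over the independent fair coin flips $\sigma(\mathcal C)$ yields
\begin{equation}\label{eq:lattid}
E^a_{D,f,0}\bigl(e^{\Phi^{a,0}_D(\tilde f)}\bigr)=\mathbb E^a_{D,f,0}\Bigl(\prod_{\mathcal C\in\mathscr C(D^a,f,\cdot)}\cosh\bigl(\mu^a_{\mathcal C}(\tilde f)\bigr)\Bigr),
\end{equation}
exactly as in the proof of Proposition~\ref{prop:mgf} but with $\tilde f$ in place of the constant test function $h\mathbf 1_D$. So the task reduces to showing that both sides of \eqref{eq:lattid} converge to the corresponding continuum quantities as $a\downarrow 0$, and that the common limit is finite.

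For the left-hand side I would invoke the convergence in law of $\Phi^{a,0}_D$ to $\Phi^0_D$ (Theorem~1.3 of \cite{CGN15}) together with uniform exponential-moment control on $\Phi^{a,0}_D(\tilde f)$; the latter is exactly the kind of estimate recorded in Proposition~3.5 of \cite{CGN15} (and used above in Lemma~\ref{lem:mgf} for the constant test function), and it upgrades weak convergence to convergence of $E(e^{\Phi^{a,0}_D(\tilde f)})$ to $E^0_{D,f,0}(e^{\Phi^0_D(\tilde f)})$, with the limit finite. For the right-hand side I would use the convergence in distribution of the collection of renormalized cluster measures $\{\mu^a_{\mathcal C}\}\Rightarrow\{\mu^0_{\mathcal C}\}$ established in Theorem~8.2 of \cite{CCK17}, with respect to the metric $dist$ of \eqref{eq:metric}. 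Since $\tilde f\in C^\infty(\bar D)$ is bounded and (uniformly) continuous on $\bar D$, the functionals $\{\mu_{\mathcal C}\}\mapsto \prod_{\mathcal C}\cosh(\mu_{\mathcal C}(\tilde f))$ are continuous in this topology on configurations with finitely many nonnegligible clusters, so one gets convergence of $\prod_{\mathcal C}\cosh(\mu^a_{\mathcal C}(\tilde f))$ in distribution to $\prod_{\mathcal C}\cosh(\mu^0_{\mathcal C}(\tilde f))$. Combined with \eqref{eq:lattid} and the uniform integrability of the left-hand sides (which bounds the right-hand sides uniformly, since $\prod\cosh\ge 1$ and $\cosh(\mu^a_{\mathcal C}(\tilde f))\le \prod_{\mathcal C}\cosh(\mu^a_{\mathcal C}(\|\tilde f\|_\infty \mathbf 1))$-type comparisons reduce to the constant-field moment bound with $h=\|\tilde f\|_\infty$), this passes the identity \eqref{eq:lattid} to the limit and simultaneously gives finiteness of $\mathbb E^0_{D,f,0}(\prod_{\mathcal C}\cosh(\mu^0_{\mathcal C}(\tilde f)))$.

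The main obstacle is the limit of the right-hand side: the product $\prod_{\mathcal C}\cosh(\mu^a_{\mathcal C}(\tilde f))$ ranges over infinitely many clusters (in the limit), most of which are tiny, and one must show this functional is genuinely continuous with respect to the metric $dist$ and that no mass escapes in the product. The key point is that $\cosh(\mu_{\mathcal C}(\tilde f))-1$ is of order $\mu_{\mathcal C}(\tilde f)^2\le \|\tilde f\|_\infty^2\mu_{\mathcal C}(D)^2$, so $\log\prod_{\mathcal C}\cosh(\mu_{\mathcal C}(\tilde f))\le \|\tilde f\|_\infty^2\sum_{\mathcal C}\mu_{\mathcal C}(D)^2$, and the sum of squared renormalized areas is controlled: on the lattice $\sum_{\mathcal C}(A_{\mathcal C})^2\le A_{max}\sum_{\mathcal C}A_{\mathcal C}=A_{max}\cdot a^{15/8}|D^a|$, whose exponential moments are bounded uniformly in $a$ by Propositions~\ref{prop:mgf} and \ref{prop:mgfbd}. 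This tightness of the tail contribution from small clusters is what makes the product functional continuous and uniformly integrable, and hence what closes the argument; I would structure the proof so that this estimate is isolated as the single analytic input, with everything else being bookkeeping around the two convergence-in-law statements and the Edwards--Sokal identity \eqref{eq:lattid}.
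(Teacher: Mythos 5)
Your lattice identity and the estimates you propose for uniform integrability (the squaring trick $\cosh^2(r)\leq\cosh(2r)$, the bound via $\|\tilde f\|_\infty$ and the constant-field magnetization, the control $\sum_{\mathcal C}A_{\mathcal C}^2\leq A_{max}\sum_{\mathcal C}A_{\mathcal C}$) are all sound and consistent with the tools the paper uses. But there is a genuine gap in the step where you go from the ensemble convergence of Theorem~8.2 of \cite{CCK17} to ``convergence of $\prod_{\mathcal C}\cosh(\mu^a_{\mathcal C}(\tilde f))$ in distribution to $\prod_{\mathcal C}\cosh(\mu^0_{\mathcal C}(\tilde f))$.'' The metric $dist$ of \eqref{eq:metric} is a Hausdorff-type metric that is essentially blind to the cardinality of the ensembles and to the presence of arbitrarily small clusters, so the full infinite-product functional $\mathscr{S}\mapsto\prod_{\mu\in\mathscr{S}}\cosh(\mu(\tilde f))$ is not continuous with respect to $dist$ in any usable way; your tail estimate shows the functional is \emph{bounded} and that small clusters contribute little, but a quantitative tail bound is not the same as a continuity statement, and ``configurations with finitely many nonnegligible clusters'' is not a precise cutoff that can be fed into the continuous mapping theorem.

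What the paper does instead is introduce the diameter cutoff $\mathscr{C}_\epsilon(D^a,f,\cdot)$ (clusters of diameter $\geq\epsilon$), which is a.s.\ finite, so that the $\epsilon$-truncated product has finitely many factors and the $a\downarrow0$ limit can be taken cleanly for each fixed $\epsilon$ using the $\epsilon$-truncated version of Theorem~8.2 of \cite{CCK17} plus the same kind of uniform integrability you invoke. Only afterwards does it let $\epsilon\downarrow0$: on the $\cosh$-product side this is a monotone-convergence argument (since $\cosh\geq1$), and on the Edwards--Sokal side it uses Theorem~3.4 of \cite{CCK17} together with uniform integrability of $\exp(\sum_{\mathscr{C}_\epsilon}\sigma(\mathcal C)\mu^0_{\mathcal C}(\tilde f))$. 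In other words, the double limit ($a\downarrow0$ then $\epsilon\downarrow0$) is exactly the mechanism that replaces the continuity claim you would otherwise need. If you want to salvage your single-limit strategy you would in effect have to re-derive this $\epsilon$-truncation: fix $\epsilon$, prove convergence of the truncated products, and then show (uniformly in $a$ and in the limit) that the ratio between the truncated and full products tends to $1$ as $\epsilon\downarrow0$; this is precisely the content your $\sum_{\mathcal C}\mu_{\mathcal C}(D)^2$ estimate should be marshaled to prove, but as written the proposal asserts the conclusion rather than carrying out that argument.
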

\begin{proof}
For $\omega\in\{0,1\}^{\mathscr{B}(D^a)}$, let $\mathscr{C}_{\epsilon}(D^a,f,\omega)$ denote the collection of clusters of $\omega$ having diameter (using Euclidean distance) larger than or equal to $\epsilon$. I.e.,
\[\mathscr{C}_{\epsilon}(D^a,f,\omega):=\{\mathcal{C}:\mathcal{C}\in\mathscr{C}(D^a,f,\omega),\text{diam}(\mathcal{C})\geq\epsilon\}.\]
Similarly, we define $\mathscr{C}_{\epsilon}(D,f,\cdot)$ as the limit of $\mathscr{C}_{\epsilon}(D^a,f,\cdot)$ as $a\downarrow 0$ (see Theorem 2.1 of \cite{CCK17}). Theorem~8.2 of \cite{CCK17} says that
\begin{equation}\label{eq:wc}
\{\mu^a_{\mathcal{C}}:\mathcal{C}\in \mathscr{C}_{\epsilon}(D^a,f,\omega)\}\Longrightarrow \{\mu^0_{\mathcal{C}}:\mathcal{C}\in \mathscr{C}_{\epsilon}(D,f,\omega)\} \text{ as }a\downarrow 0,
\end{equation}
where $\Longrightarrow$ means convergence in distribution and the topology of convergence is defined by the metric in \eqref{eq:metric}. Note that there are finitely many elements in $\mathscr{C}_{\epsilon}(D^a,f,\omega)$ a.s. By the Edwards-Sokal coupling, we have
\begin{align}\label{eq:ES}
\mathbb{E}^a_{D,f,0}\left(\prod_{\mathcal{C}\in\mathscr{C}_{\epsilon}(D^a,f,\cdot)}\cosh\left(\mu^a_{\mathcal{C}}(\tilde{f})\right)\right)=\hat{\mathbb{E}}^a_{D,f,0}\left(\exp{\left(\sum_{\mathcal{C}\in\mathscr{C}_{\epsilon}(D^a,f,\cdot)}\sigma(\mathcal{C})\mu^a_{\mathcal{C}}(\tilde{f})\right)}\right),
\end{align}
where the $\sigma(\mathcal{C})$'s are i.i.d. symmetric $(\pm 1)$-valued random variables independent of everything else. Using the inequality $\cosh^2(r)\leq \cosh(2r)$ for any $r>0$, we have
\begin{align}\label{eq:supbd1}
\sup_{a>0}\mathbb{E}^a_{D,f,0}\left(\prod_{\mathcal{C}\in\mathscr{C}_{\epsilon}(D^a,f,\cdot)}\cosh\left(\mu^a_{\mathcal{C}}(\tilde{f})\right)\right)^2&\leq \sup_{a>0}\mathbb{E}^a_{D,f,0}\left(\prod_{\mathcal{C}\in\mathscr{C}_{\epsilon}(D^a,f,\cdot)}\cosh\left(2\mu^a_{\mathcal{C}}(\tilde{f})\right)\right)\nonumber\\
&\leq\sup_{a>0}\mathbb{E}^a_{D,f,0}\left(\prod_{\mathcal{C}\in\mathscr{C}(D^a,f,\cdot)}\cosh\left(2\mu^a_{\mathcal{C}}(\tilde{f})\right)\right)\nonumber\\
&\leq\sup_{a>0}\mathbb{E}^a_{D,f,0}\left(\prod_{\mathcal{C}\in\mathscr{C}(D^a,f,\cdot)}\cosh\left(2\|\tilde{f}\|_{\infty}\mu^a_{\mathcal{C}}(D^a)\right)\right)\nonumber\\
&=\sup_{a>0} E^a_{D,f,0}(e^{2\|\tilde{f}\|_{\infty}m^a_D})\leq C(\tilde{f},D)
\end{align}
where the last equality follows from Proposition \ref{prop:mgf}, and $C(\tilde{f},D)\in(0,\infty)$ only depends on $\tilde{f},D$, and the last inequality follows by considering a square with $+$ boundary conditions containing $D$, using the GKS inequalities and Lemma \ref{lem:mgf} (see \eqref{eq:GKS}). \eqref{eq:ES} with $\mu_{\mathcal{C}}^a(\tilde{f})$ replaced by $2\mu_{\mathcal{C}}^a(\tilde{f})$ and \eqref{eq:supbd1} imply
\begin{equation}\label{eq:supbd2}
\sup_{a>0}\hat{\mathbb{E}}^a_{D,f,0}\left(\exp{\left(\sum_{\mathcal{C}\in\mathscr{C}_{\epsilon}(D^a,f,\cdot)}\sigma(\mathcal{C})\mu^a_{\mathcal{C}}(\tilde{f})\right)}\right)^2\leq C(\tilde{f},D).
\end{equation}
Equation \eqref{eq:supbd1} implies that $\{\prod_{\mathcal{C}\in\mathscr{C}_{\epsilon}(D^a,f,\cdot)}\cosh\left(\mu^a_{\mathcal{C}}(\tilde{f})\right):a>0\}$ is uniformly integrable, so combining that with \eqref{eq:wc}, we obtain
\begin{equation}\label{eq:lit1}
\lim_{a\downarrow 0}\mathbb{E}^a_{D,f,0}\left(\prod_{\mathcal{C}\in\mathscr{C}_{\epsilon}(D^a,f,\cdot)}\cosh\left(\mu^a_{\mathcal{C}}(\tilde{f})\right)\right)
=\mathbb{E}^0_{D,f,0}\left(\prod_{\mathcal{C}\in\mathscr{C}_{\epsilon}(D,f,\cdot)}\cosh\left(\mu^0_{\mathcal{C}}(\tilde{f})\right)\right).
\end{equation}
From Lemma 3.3 in \cite{CCK17}, we know that
\begin{equation}\label{eq:wc1}
\sum_{\mathcal{C}\in\mathscr{C}_{\epsilon}(D^a,f,\cdot)}\sigma(\mathcal{C})\mu^a_{\mathcal{C}}(\tilde{f})\Longrightarrow\sum_{\mathcal{C}\in\mathscr{C}_{\epsilon}(D,f,\cdot)}\sigma(\mathcal{C})\mu^0_{\mathcal{C}}(\tilde{f}) \text{ as }a\downarrow 0.
\end{equation}
Equation \eqref{eq:supbd2} implies that $\{\exp{\left(\sum_{\mathcal{C}\in\mathscr{C}_{\epsilon}(D^a,f,\cdot)}\sigma(\mathcal{C})\mu^a_{\mathcal{C}}(\tilde{f})\right)}:a>0\}$ is uniformly integrable, so combining that with \eqref{eq:wc1}, we obtain
\begin{align}\label{eq:lit2}
\lim_{a\downarrow0}\hat{\mathbb{E}}^a_{D,f,0}\left(\exp{\left(\sum_{\mathcal{C}\in\mathscr{C}_{\epsilon}(D^a,f,\cdot)}\sigma(\mathcal{C})\mu^a_{\mathcal{C}}(\tilde{f})\right)}\right)=\hat{\mathbb{E}}^0_{D,f,0}\left(\exp{\left(\sum_{\mathcal{C}\in\mathscr{C}_{\epsilon}(D,f,\cdot)}\sigma(\mathcal{C})\mu^0_{\mathcal{C}}(\tilde{f})\right)}\right).
\end{align}
Equations \eqref{eq:ES}, \eqref{eq:lit1} and \eqref{eq:lit2} imply that
\begin{align}\label{eq:eq0}
\mathbb{E}^0_{D,f,0}\left(\prod_{\mathcal{C}\in\mathscr{C}_{\epsilon}(D,f,\cdot)}\cosh\left(\mu^0_{\mathcal{C}}(\tilde{f})\right)\right)=\hat{\mathbb{E}}^0_{D,f,0}\left(\exp{\left(\sum_{\mathcal{C}\in\mathscr{C}_{\epsilon}(D,f,\cdot)}\sigma(\mathcal{C})\mu^0_{\mathcal{C}}(\tilde{f})\right)}\right).
\end{align}
By the monotone convergence theorem, we have
\begin{align}\label{eq:lit3}
\lim_{\epsilon\downarrow0}\mathbb{E}^0_{D,f,0}\left(\prod_{\mathcal{C}\in\mathscr{C}_{\epsilon}(D,f,\cdot)}\cosh\left(\mu^0_{\mathcal{C}}(\tilde{f})\right)\right)=\mathbb{E}^0_{D,f,0}\left(\prod_{\mathcal{C}\in\mathscr{C}(D,f,\cdot)}\cosh\left(\mu^0_{\mathcal{C}}(\tilde{f})\right)\right)<\infty,
\end{align}
where the last inequality follows from \eqref{eq:supbd1}.
Theorem 3.4 of \cite{CCK17} says
\[\sum_{\mathcal{C}\in\mathscr{C}_{\epsilon}(D,f,\cdot)}\sigma(\mathcal{C})\mu^0_{\mathcal{C}}(\tilde{f})\Longrightarrow \sum_{\mathcal{C}\in\mathscr{C}(D,f,\cdot)}\sigma(\mathcal{C})\mu^0_{\mathcal{C}}(\tilde{f})\text{ as }\epsilon\downarrow 0.\]
Equation \eqref{eq:supbd2} also implies that $\{\exp{\left(\sum_{\mathcal{C}\in\mathscr{C}_{\epsilon}(D,f,\cdot)}\sigma(\mathcal{C})\mu^0_{\mathcal{C}}(\tilde{f})\right)}:\epsilon>0\}$ is uniformly integrable, and thus
\begin{align}\label{eq:lit4}
&\lim_{\epsilon\downarrow 0}\hat{\mathbb{E}}^0_{D,f,0}\left(\exp{\left(\sum_{\mathcal{C}\in\mathscr{C}_{\epsilon}(D,f,\cdot)}\sigma(\mathcal{C})\mu^0_{\mathcal{C}}(\tilde{f})\right)}\right)\nonumber\\
&=\hat{\mathbb{E}}^0_{D,f,0}\left(\exp{\left(\sum_{\mathcal{C}\in\mathscr{C}(D,f,\cdot)}\sigma(\mathcal{C})\mu^0_{\mathcal{C}}(\tilde{f})\right)}\right)=E^0_{D,f,0}(e^{\Phi^0_{D}(\tilde{f})}).
\end{align}
The proposition now follows from \eqref{eq:eq0}, \eqref{eq:lit3} and \eqref{eq:lit4}.
\end{proof}

Now we are ready to prove Theorem \ref{thm2}.
\begin{proof}[Proof of Theorem \ref{thm2}]
Since \eqref{eq:thm22} implies \eqref{eq:thm21}, we only need to prove \eqref{eq:thm22}.
The proof of Proposition 1.5 in \cite{CGN16} implies
\begin{equation}
E^0_{D,f,h}\left(e^{\Phi^h_{D}(\tilde{f})}\right)=\frac{E^0_{D,f,0}\left(e^{h\Phi^0_{D}(1_D)}e^{\Phi^0_{D}(\tilde{f})}\right)}{E^0_{D,f,0}\left(e^{h\Phi^0_{D}(1_D)}\right)}.
\end{equation}
Applying Proposition \ref{prop:mgfcon}, 
we have
\begin{equation}
E^0_{D,f,h}\left(e^{\Phi^h_{D}(\tilde{f})}\right)=\frac{\mathbb{E}^0_{D,f,0}\left(\prod_{C\in\mathscr{C}(D,f,\cdot)}\cosh\left(h\mu^0_{\mathcal{C}}(D)+\mu^0_{\mathcal{C}}(\tilde{f})\right)\right)}{\mathbb{E}^0_{D,f,0}\left(\prod_{C\in\mathscr{C}(D,f,\cdot)}\cosh\left(h\mu^0_{\mathcal{C}}(D)\right)\right)}.
\end{equation}
An elementary calculation shows that
\begin{equation}
E_{U_{\mathcal{C}}}(e^{S_{\mathcal{C},h}\mu^0_{\mathcal{C}}(\tilde{f})}) = \frac{\cosh\left(h\mu^0_{\mathcal{C}}(D)+\mu^0_{\mathcal{C}}(\tilde{f})\right)}{\cosh\left(h\mu^0_{\mathcal{C}}(D)\right)},
\end{equation}
which completes the proof.
\end{proof}

Next, we prove Theorem \ref{thm3}.
\begin{proof}[Proof of Theorem \ref{thm3}]
The proof is similar to that of Proposition \ref{prop:mgfbdh}. By \eqref{eq:RNC},
\begin{align*}
\mathbb{E}^0_{D,f,h}\left(e^{t\max_{\mathcal{C}\in\mathscr{C}(D,f,\cdot)}\mu^0_{\mathcal{C}}(D)}\right)
&=\frac{\mathbb{E}^0_{D,f,0}\left(e^{t\max_{\mathcal{C}\in\mathscr{C}(D,f,\cdot)}\mu^0_{\mathcal{C}}(D)}\prod_{\mathcal{C}\in \mathscr{C}(D,f,\cdot)}\cosh\left(h\mu^0_{\mathcal{C}}(D)\right)\right)}{\mathbb{E}_{D,f,0}^0\left[\prod_{\mathcal{C}\in \mathscr{C}(D,f,\cdot)}\cosh(h\mu^0_{\mathcal{C}}(D)\right]}\\
&\leq \mathbb{E}^0_{D,f,0}\left(e^{t\max_{\mathcal{C}\in\mathscr{C}(D,f,\cdot)}\mu^0_{\mathcal{C}}(D)}\prod_{\mathcal{C}\in \mathscr{C}(D,f,\cdot)}\cosh\left(h\mu^0_{\mathcal{C}}(D)\right)\right)\\
&\leq 2\mathbb{E}^0_{D,f,0}\left(\prod_{\mathcal{C}\in \mathscr{C}(D,f,\cdot)}\cosh\left((h+t)\mu^0_{\mathcal{C}}(D)\right)\right),
\end{align*}
where the last inequality follows from $e^{tr}\cosh(hr)\leq 2\cosh((h+t)r)$ and $\cosh(hs)\leq \cosh((h+t)s)$, valid for any $r,s\geq 0$.

Combining this with Proposition \ref{prop:mgfcon}, we have
\begin{align}
\mathbb{E}^0_{D,f,h}\left(\exp{\left(t\max_{\mathcal{C}\in\mathscr{C}(D,f,\cdot)}\mu^0_{\mathcal{C}}(D)\right)}\right)
&\leq 2\mathbb{E}^0_{D,f,0}\left(\prod_{C\in\mathscr{C}(D,f,\cdot)}\cosh\left((t+h)\mu^0_{\mathcal{C}}(D)\right)\right)\nonumber\\
&=2E^0_{D,f,0}\left(e^{(t+h)\Phi^0_{D}(1_D)}\right)
\end{align}
The proof is completed by using Proposition 2.2 and Theorem 1.2 of \cite{CGN16}.
\end{proof}

\section*{Appendix}
\renewcommand*{\theproposition}{\Alph{proposition}}
\setcounter{proposition}{0}
\textbf{FK-Ising coupling in a magnetic field.}
Although the material in this appendix is essentially contained in the paper by Cioletti and Vila \cite{CV16}, some of the formulas are only implicit there. We include it here for the sake of completeness.
Consider an Ising model on a finite graph $\mathcal{G}=(\mathcal{V},\mathcal{E})$ with pair ferromagnetic interactions $J_e\geq 0$ for $e\in\mathcal{E}$ and non-negative magnetic field strength $\vec{H}=(H_v:v\in\mathcal{V})$ with each $H_v\geq0$. The Gibbs measure is
\[\frac{1}{Z_{\mathcal{G}}}\exp{\left(\sum_{e=\{u,v\}}J_e\sigma_u\sigma_v+\sum_{v\in \mathcal{V}}H_v\sigma_v\right)}.\]

The Edwards-Sokal coupling in this case is a measure $\hat{\mathbb{P}}_{\vec{H}}$ for FK bond configurations and spin configurations on the extended graph $\hat{\mathcal{G}}=(\hat{\mathcal{V}},\hat{\mathcal{E}})$ where $\hat{\mathcal{V}}=\mathcal{V}\cup \{g\}$ and $\hat{\mathcal{E}}=\mathcal{E}\cup\{\{v,g\}:v\in \mathcal{V}\}$ where the cluster containing $g$ is forced to have all $\sigma_v=+1$ and all other clusters are equally likely to be $+1$ or $-1$. Below we describe a different coupling which first determines the clusters formed by only the edges in $\mathcal{E}$ and after that determines whether those clusters are connected to $g$.

Let $\tilde{\mathbb{P}}_{\mathcal{G},\vec{H}}$ (resp., $\tilde{\mathbb{P}}_{\mathcal{G},0}$ when $\vec{H}\equiv 0$) denote the FK distribution restricted to the edges in $\mathcal{G}$. For each cluster $\mathcal{C}$ in any configuration from $\tilde{\mathbb{P}}_{\mathcal{G},\vec{H}}$, the un-normalized FK measure contains a factor of $2\prod_{v\in \mathcal{C}}e^{-2 H_v}$ if \emph{none} of the $\{v,g\}$ edges to $g$ from $\mathcal{C}$ are open (the factor $2$ is because the number of clusters in $\hat{\mathcal{G}}$ is one higher than when $\mathcal{C}$ has some open edge to $g$). The sum of all remaining factors is $(1-\prod_{v\in\mathcal{C}}e^{-2H_v})$. Thus for each $\mathcal{C}$, the overall factor is
\begin{equation*}
(1-\prod_{v\in\mathcal{C}}e^{-2H_v})+2\prod_{v\in \mathcal{C}}e^{-2H_v}=1+e^{-\sum_{v\in\mathcal{C}}(2H_v)}=2e^{-\sum_{v\in\mathcal{C}}H_v}\cosh(H(\mathcal{C})),
\end{equation*}
where $H(\mathcal{C})=\sum_{v\in\mathcal{C}}H_v$. Taking the product over all clusters $\mathcal{C}$ and noting that $\sum_{\mathcal{C}}H(\mathcal{C})=\sum_{v\in\mathcal{V}}H_v$ does not depend on the FK configuration, one immediately has:
\begin{proposition}
\begin{equation}\label{eq:RNG}
\frac{d\tilde{\mathbb{P}}_{\mathcal{G},\vec{H}}}{d\tilde{\mathbb{P}}_{\mathcal{G},0}}=\frac{\prod_{\mathcal{C}}\cosh(H(\mathcal{C}))}{\tilde{\mathbb{E}}_{\mathcal{G},0}\left(\prod_{\mathcal{C}}\cosh\left(H(\mathcal{C})\right)\right)},
\end{equation}
where $\tilde{\mathbb{E}}_{\mathcal{G},0}$ is the expectation with respect to $\tilde{\mathbb{P}}_{\mathcal{G},0}$.
\end{proposition}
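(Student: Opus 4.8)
The plan is to derive \eqref{eq:RNG} by marginalizing the Edwards--Sokal measure $\hat{\mathbb{P}}_{\vec{H}}$ on the extended graph $\hat{\mathcal{G}}$ down to the internal edges $\mathcal{E}$, exactly along the lines of the paragraph preceding the statement, and then pinning down the normalization using the fact that a Radon--Nikodym derivative has expectation one.

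First I would write the un-normalized FK weight on $\hat{\mathcal{G}}$: a configuration $\hat{\omega}$ on $\hat{\mathcal{E}}$ carries weight $\prod_{e\in\mathcal{E}}(1-e^{-2J_e})^{\hat{\omega}(e)}(e^{-2J_e})^{1-\hat{\omega}(e)}\prod_{v\in\mathcal{V}}(1-e^{-2H_v})^{\hat{\omega}(\{v,g\})}(e^{-2H_v})^{1-\hat{\omega}(\{v,g\})}\,2^{\mathcal{K}(\hat{\omega})}$, where (as in \eqref{eqFKdef}) $\mathcal{K}(\hat{\omega})$ counts the clusters of $\hat{\omega}$ not containing $g$. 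Fixing the internal restriction $\omega=\hat{\omega}|_{\mathcal{E}}$ and letting $\mathcal{C}$ range over the clusters of $\omega$ in $\mathcal{G}$, the external edges partition according to which internal cluster their endpoint lies in, and $\mathcal{K}(\hat{\omega})$ equals the number of those internal clusters \emph{not} joined to $g$. Hence, summing over all $\hat{\omega}$ with $\hat{\omega}|_{\mathcal{E}}=\omega$, the external-edge sum factorizes over the clusters $\mathcal{C}$: a cluster that stays disconnected from $g$ contributes a factor $2$ to the cluster count and $\prod_{v\in\mathcal{C}}e^{-2H_v}$ from its external edges, while a cluster absorbed into the ghost cluster contributes no factor $2$ and $1-\prod_{v\in\mathcal{C}}e^{-2H_v}$ from its external edges. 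Adding the two alternatives gives the per-cluster factor
\[
2\prod_{v\in\mathcal{C}}e^{-2H_v}+\Big(1-\prod_{v\in\mathcal{C}}e^{-2H_v}\Big)=1+e^{-2H(\mathcal{C})}=2e^{-H(\mathcal{C})}\cosh\big(H(\mathcal{C})\big).
\]

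Taking the product over all clusters of $\omega$, the internal-edge marginal weight of $\omega$ under $\tilde{\mathbb{P}}_{\mathcal{G},\vec{H}}$ equals $\prod_{e\in\mathcal{E}}(1-e^{-2J_e})^{\omega(e)}(e^{-2J_e})^{1-\omega(e)}$ times $2^{\#\{\text{clusters of }\omega\}}\,e^{-\sum_{v\in\mathcal{V}}H_v}\prod_{\mathcal{C}}\cosh(H(\mathcal{C}))$, since $\sum_{\mathcal{C}}H(\mathcal{C})=\sum_{v}H_v$ is independent of $\omega$. When $\vec{H}\equiv0$ every external edge is closed with probability one, so the internal-edge marginal of the extended-graph FK measure is simply the usual $q=2$ random-cluster weight $\prod_{e\in\mathcal{E}}(1-e^{-2J_e})^{\omega(e)}(e^{-2J_e})^{1-\omega(e)}\,2^{\#\{\text{clusters of }\omega\}}$, which is what defines $\tilde{\mathbb{P}}_{\mathcal{G},0}$. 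Therefore the ratio of the two un-normalized weights is an $\omega$-independent constant times $\prod_{\mathcal{C}}\cosh(H(\mathcal{C}))$, so $d\tilde{\mathbb{P}}_{\mathcal{G},\vec{H}}/d\tilde{\mathbb{P}}_{\mathcal{G},0}$ is proportional to $\prod_{\mathcal{C}}\cosh(H(\mathcal{C}))$; evaluating the constant from $\tilde{\mathbb{E}}_{\mathcal{G},0}\big(d\tilde{\mathbb{P}}_{\mathcal{G},\vec{H}}/d\tilde{\mathbb{P}}_{\mathcal{G},0}\big)=1$ gives \eqref{eq:RNG}.

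I expect the only delicate point to be the cluster-count bookkeeping in the marginalization: one must verify that absorbing several internal clusters into $g$ decreases the exponent of $2$ by exactly the number of internal clusters failing to connect to $g$, so that the external-edge sum genuinely factorizes over the $\mathcal{C}$'s, and that the $\vec{H}\equiv0$ reference measure is indeed the internal-edge restriction of the extended-graph FK measure. Both follow at once from the convention that only clusters not containing $g$ carry the weight $q=2$. Everything else reduces to the elementary identity $1+e^{-2r}=2e^{-r}\cosh r$ and the normalization step.
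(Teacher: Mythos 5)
Your proposal is correct and follows essentially the same route as the paper: marginalize the extended-graph FK weight over the external (ghost) edges, observe that the sum factorizes over the internal clusters because $2^{\mathcal{K}(\hat\omega)}$ is a product of per-cluster indicators of not touching $g$, apply the identity $1+e^{-2r}=2e^{-r}\cosh r$, and absorb the $e^{-H(\mathcal{C})}$ factors into the normalization since $\sum_{\mathcal{C}}H(\mathcal{C})=\sum_v H_v$ is configuration-independent. You spell out the cluster-count bookkeeping and the identification of the $\vec H\equiv 0$ reference measure a bit more explicitly than the paper does, but the argument is the same.
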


\begin{proposition}
Conditioned on a configuration $\tilde{\omega}$ from $\tilde{\mathbb{P}}_{\mathcal{G},\vec{H}}$, the events of whether the different clusters $\mathcal{C}_i$ in $\omega$ are connected directly to $g$ and whether the spin values, $\sigma(\mathcal{C}_i)$, are $+1$ or $-1$ are mutually independent as $i$ varies with
\begin{align}\label{eq:tanhG}
&\hat{\mathbb{P}}_{\mathcal{G},\vec{H}}(\mathcal{C}_i\longleftrightarrow g|\omega)=\tanh(H(\mathcal{C}_i))\nonumber\\
&\hat{\mathbb{P}}_{\mathcal{G},\vec{H}}(\sigma(\mathcal{C}_i)=+1|\omega)=\tanh(H(\mathcal{C}_i))+\frac{1}{2}\left(1-\tanh(H(\mathcal{C}_i))\right)\nonumber\\
&\hat{\mathbb{P}}_{\mathcal{G},\vec{H}}(\sigma(\mathcal{C}_i)=-1|\omega)=\frac{1}{2}\left(1-\tanh\left(H(\mathcal{C}_i)\right)\right).
\end{align}
\end{proposition}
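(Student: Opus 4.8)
The plan is to run the proof of Proposition~\ref{prop:tanh} again, now in the general setting of $\hat{\mathcal{G}}$. I would work directly from the Edwards--Sokal coupling $\hat{\mathbb{P}}_{\mathcal{G},\vec{H}}$, whose FK marginal assigns to a configuration $\omega$ on $\hat{\mathcal{E}}$ a weight proportional to $\prod_{e\in\mathcal{E}}(1-e^{-2J_e})^{\omega(e)}(e^{-2J_e})^{1-\omega(e)}$ times $\prod_{v\in\mathcal{V}}(1-e^{-2H_v})^{\omega(\{v,g\})}(e^{-2H_v})^{1-\omega(\{v,g\})}$ times $2^{\mathcal{K}(\omega)}$, where $\mathcal{K}(\omega)$ is the number of clusters of $\omega$ not containing $g$; the spins are obtained by setting $\sigma\equiv+1$ on the cluster of $g$ and tossing an independent fair coin on every other cluster. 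I would then condition on the sub-configuration $\tilde\omega$ obtained by keeping only the internal edges in $\mathcal{E}$, which freezes the clusters $\mathcal{C}_1,\mathcal{C}_2,\dots$ on $\mathcal{G}$ and the internal edge weights.

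The crux is a factorization. Given $\tilde\omega$, the conditional law of the external coordinates $(\omega(\{v,g\}))_{v\in\mathcal{V}}$ has density, with respect to the product of their Bernoulli weights, proportional to $2^{\mathcal{K}(\omega)}$. Since opening external edges can only merge some of the $\mathcal{C}_i$ with $g$ into a single $g$-containing cluster, one has the identity $\mathcal{K}(\omega)=\sum_i 1_{\{\mathcal{C}_i\centernot\longleftrightarrow g\}}$, so $2^{\mathcal{K}(\omega)}=\prod_i 2^{1_{\{\mathcal{C}_i\centernot\longleftrightarrow g\}}}$ is a product over $i$ of a factor depending only on the external edges incident to $\mathcal{C}_i$. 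As the vertex sets $\mathcal{C}_i$ partition $\mathcal{V}$, these external edge sets partition $\{\{v,g\}:v\in\mathcal{V}\}$; combined with the product form of the external Bernoulli weights, this shows that, conditioned on $\tilde\omega$, the external sub-configurations inside distinct clusters are mutually independent. This already yields the asserted mutual independence of the events $\{\mathcal{C}_i\longleftrightarrow g\}$, and, after adjoining the independent coins, of the events $\{\sigma(\mathcal{C}_i)=+1\}$.

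It then remains to evaluate a single cluster, which is the same bookkeeping that produced \eqref{eq:RNG}: inside $\mathcal{C}_i$, the all-closed choice of external edges has weight $\prod_{v\in\mathcal{C}_i}e^{-2H_v}$ and carries the extra factor $2$ (the cluster stays separate), while the remaining choices contribute total weight $1-\prod_{v\in\mathcal{C}_i}e^{-2H_v}$ with no extra factor. Normalizing,
\[\hat{\mathbb{P}}_{\mathcal{G},\vec{H}}(\mathcal{C}_i\longleftrightarrow g\mid\tilde\omega)=\frac{1-\prod_{v\in\mathcal{C}_i}e^{-2H_v}}{\bigl(1-\prod_{v\in\mathcal{C}_i}e^{-2H_v}\bigr)+2\prod_{v\in\mathcal{C}_i}e^{-2H_v}}=\frac{1-e^{-2H(\mathcal{C}_i)}}{1+e^{-2H(\mathcal{C}_i)}}=\tanh\bigl(H(\mathcal{C}_i)\bigr),\]
using $\sum_{v\in\mathcal{C}_i}2H_v=2H(\mathcal{C}_i)$. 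The two spin formulas follow from the Edwards--Sokal rule: $\sigma(\mathcal{C}_i)=+1$ holds either when $\mathcal{C}_i\longleftrightarrow g$, with conditional probability $\tanh(H(\mathcal{C}_i))$, or when $\mathcal{C}_i\centernot\longleftrightarrow g$ and the cluster's fair coin is heads, with conditional probability $\tfrac12\bigl(1-\tanh(H(\mathcal{C}_i))\bigr)$, while $\sigma(\mathcal{C}_i)=-1$ occurs only in the latter case with a tails outcome.

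The only real obstacle I anticipate is the cluster-counting step in the extended graph: one must check carefully that, even when several internal clusters are simultaneously joined to $g$ and thereby collapsed into one cluster, the factor $2^{\mathcal{K}}$ still splits as a product over $i$. This is exactly the identity $\mathcal{K}(\omega)=\sum_i 1_{\{\mathcal{C}_i\centernot\longleftrightarrow g\}}$ recorded above; once it is in place, the rest is the same elementary computation as in Proposition~\ref{prop:tanh}.
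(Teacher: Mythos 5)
Your proof is correct and takes essentially the same approach as the paper's: the paper simply refers back to the proof of Proposition~\ref{prop:tanh}, and you rerun that argument in the general setting, making explicit the cluster-counting identity $\mathcal{K}(\omega)=\sum_i 1_{\{\mathcal{C}_i\centernot\longleftrightarrow g\}}$ and the resulting factorization of the conditional law of the external edges over the internal clusters.
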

\begin{proof}
This follows from the Edwards-Sokal coupling like in the proof of Proposition \ref{prop:tanh} of Section \ref{sec:coupling} above.
\end{proof}
\begin{remark*}
The analysis above extends to the FK model with cluster weight $q>0$ (see, e.g., \cite{Gri06}) where the factor in the FK measure of $2^{(\text{no. of clusters})}$ (as in \eqref{eqFKdef}) is replaced by $q^{(\text{no. of clusters})}$. This leads to a modified Radon-Nikodym factor, compared to \eqref{eq:RNG}, proportional to
\[\prod_{\mathcal{C}}\left[\frac{2}{q}\cosh(H(\mathcal{C}))+\frac{q-2}{q}e^{-H(\mathcal{C})}\right]\]
and with the RHS of the first equation in \eqref{eq:tanhG} modified to
\[\frac{\tanh\left(H(\mathcal{C})\right)}{1+(q-2)/(e^{2H(\mathcal{C})}+1)}.\]
When $q=3,4,\ldots$, and $g$ is fixed as one of the $q$ colors of the corresponding $q$-state Potts model, modified versions of the other equations in \eqref{eq:tanhG} can be easily determined.
\end{remark*}

\section*{Acknowledgements}
The research was supported in part by STCSM grant 17YF1413300 to JJ and U.S. NSF grant DMS-1507019 to CMN. The authors thank Rob van den Berg, Francesco Caravenna, Gesualdo Delfino, Roberto Fernandez, Alberto Gandolfi, Christophe Garban, Barry McCoy, Tom Spencer, Rongfeng Sun and Nikos Zygouras for useful comments and discussions related to this work. The authors benefitted from the hospitality of several units of NYU during their work on this paper: the Courant Institute and CCPP at NYU-New York, NYU-Abu Dhabi, and NYU-Shanghai. We also thank the referee for valuable comments and suggestions.

\end{document}